\DeclareMathAlphabet{\mathpzc}{OT1}{pzc}{m}{it}
\newcommand{\dbar}{\overline{\partial}}
\newcommand{\ddbar}{\sqrt{-1}\partial\dbar}
\def\cC{{\mathcal C}}
\def\cE{{\mathcal E}}
\def\cF{{\mathcal F}}
\def\cK{{\mathcal K}}
\def\cJ{{\mathcal J}}
\def\cO{{\mathcal O}}
\def\cS{{\mathcal S}}
\def\cT{{\mathcal T}}
\newtheorem{theorem}{Theorem}[section]
\newtheorem{proposition}{Proposition}[section]
\newtheorem{lemma}{Lemma}[section]
\newtheorem{definition}{Definition}[section]
\newtheorem{corollary}{Corollary}[section]
\numberwithin{equation}{section}
\begin{document}

\title{Sup-slopes and sub-solutions for fully nonlinear elliptic equations}

\author[{Bin Guo and Jian Song}
]{Bin Guo$^*$ and Jian Song$^\dagger$  }

 \thanks{Work supported in part by the National Science Foundation under grants  DMS-2203607 and DMS-2303508, and the collaboration grant 946730 from Simons Foundation.}

\address{$^*$ Department of Mathematics \& Computer Science, Rutgers University, Newark, NJ 07102}

\email{bguo@rutgers.edu}

\address{$^\dagger$ Department of Mathematics, Rutgers University, Piscataway, NJ 08854}

\email{jiansong@math.rutgers.edu}

\begin{abstract} 

{\footnotesize } We establish a necessary and sufficient condition for solving a general class of fully nonlinear elliptic equations on closed Riemannian or hermitian manifolds, including both hessian and hessian quotient equations. It settles an open problem of Li and Urbas. Such a condition is based on an analytic slope invariant analogous to the slope stability and the Nakai-Moishezon criterion in complex geometry. As an application, we solve the non-constant $J$-equation on both hermitian manifolds and  singular K\"ahler spaces.

\end{abstract}

\maketitle


\section{Introduction}

In this paper, we aim to establish an analytic criterion for the solvability problem of fully nonlinear second-order elliptic partial differential equations on a closed Riemannian or hermitian manifold. 

The Dirichlet problem 
\begin{equation} \label{dp}
\left\{
\begin{array}{l}
F[u] =f(\lambda(\nabla^2 u))= e^\psi  , ~ {\rm in} ~\Omega,  \\
\\
u =\phi, ~{\rm on} ~\partial \Omega,
\end{array} \right.
\end{equation}%
on a domain $\Omega$ in $\mathbb{R}^n$ is studied in the classical works of \cite{CNS, Tr, Gu1, Gu1, GS} and many others, where $\psi$ is a function on $\Omega$,  $f$ is a given symmetric function on $\mathbb{R}^n$ and $\lambda=(\lambda_1, ..., \lambda_n)$ denotes the eigenvalues of $\nabla^2 u$.

We assume that $f\in C^\infty(\Gamma)\cap C^0(\overline \Gamma)$ is a symmetric function defined on an open convex symmetric cone $ \Gamma\subset \mathbb{R}^n$ with vertex at the origin and the positive cone $\Gamma_n \subset \Gamma$. Furthermore, $f$  satisfies the following standard conditions:
 \begin{eqnarray} 
 && \frac{\partial f}{\partial \lambda_i} >0 ~\textnormal{in} ~\Gamma,~ i=1, ..., n,  \label{fcon1}\\
 && f~\textnormal{is ~concave~in}~\Gamma, \label{fcon2}\\
 && f>0 ~\textnormal{in}~\Gamma, ~ f= 0 ~\textnormal{in~} \partial \Gamma,  \label{fcon3}\\
&& \lim_{R\rightarrow \infty} f(R\lambda) =\infty, ~{\rm for ~any~} \lambda \in \Gamma. \label{fcon4}
\end{eqnarray}

There are many examples  of symmetric functions satisfying (\ref{fcon1}), (\ref{fcon2}), (\ref{fcon3}), (\ref{fcon4}) such as
$$f(\lambda) = \left( S_k(\lambda) \right)^{\frac{1}{k}} = \left( \sum_{ i_1<i_2<...<i_k} \lambda_{i_1}\lambda_{i_2}... \lambda_{i_k} \right)^{\frac{1}{k}},$$ 
for  $k =1$, ..., $n$. The corresponding cone is given by 
$$\Gamma_k = \{ \lambda \in \mathbb{R}^n~|~ S_i(\lambda)>0, ~i =1, ..., k\}. $$
In particular, equation (\ref{dp}) corresponds to the Monge-Amp\`ere equation when $k=n$ and the Laplace equation when $k=1$. 
Another  interesting example is 
$$f(\lambda) =\left( S_{k, l}(\lambda)\right)^{\frac{1}{k-l}} = \left( \frac{S_k(\lambda)}{S_l(\lambda)} \right)^{\frac{1}{k-l}}, ~ 1\leq l < k \leq n.$$
In this case, equation (\ref{dp}) becomes a hessian quotient equation.

It is proved by Trudinger \cite{Tr} that equation (\ref{dp})  can be uniquely solved if the principle curvatures $\nu_1$, ..., $\nu_{n-1}$ of $\partial \Omega$ satisfy $(\nu_1, ..., \nu_{n-1}, R) \in \Gamma$ for some $R>0$. A sub-solution criterion is also established based on the works of Guan and Guan-Spruck \cite{Gu1, GS}.

We will investigate   equation (\ref{dp}) in the global setting, which is initiated by the work of Li \cite{Li} on closed Riemannian manifolds.  Let $(M, g)$ be an $n$-dimensional closed Riemannian manifold. Suppose $\theta$ is a smooth global symmetric $2$-tensor. For any $u\in C^\infty(M)$, 
 $$\theta_u= \theta + \nabla^2 u$$
 is also a global symmetric $2$-tensor, where $\nabla^2$ is the hessian operator associated to the Riemannian metric $g$. 
   Let $\lambda (\theta_u)= (\lambda_1, ..., \lambda_n)$ be the eigenvalues of $\theta_u$ with respect to $g$ on $M$. At each point $p$, one can choose normal coordinates $(x_1, x_2, ..., x_n)$ for $g$ near $p$ so that 
 $$g_{ij}(p) = \delta_{ij}, ~ (\theta_u)_{ij} = \lambda_i \delta_{ij}, ~i, j =1, 2, ..., n. $$

 We will consider the following hessian type equation
  \medskip
 \begin{equation}\label{maineqn}
 F[u] =  f(\lambda(\theta_u))= e^{\psi + c} , ~c\in \mathbb{R}
 \medskip
 \end{equation}
on $M$ for any given $\psi\in C^\infty(M)$. 
In the case when $\theta = g$, the equation becomes
$$F[u] = f(I+ g^{-1}\cdot\nabla^2 u) = e^{\psi + c}$$ studied by \cite{Li} and later by by \cite{De2},  \cite{Ur} and  \cite{Gu2} under various curvature, structural and sub-solution assumptions. The solvability problem of (\ref{maineqn}) is raised by Urbas \cite{Ur}. The major progress is made in the general case by the important work of Szekelyhidi \cite{Sz}, where the notion of $\cC$-subsolution is introduced and a priori estimates are established. There have also been extensive and deep works  by \cite{Gu2, Gu3} in terms of sub-solutions. Such  a priori estimates, however, cannot be directly applied to establish the solvability of equation (\ref{maineqn}) in general,  because most of such structural or sub-solution conditions do not seem to be preserved along the deformation of the continuity method. This particularly poses challenges when there are geometric or analytic obstructions to the equation.

Our approach is to introduce a set of new sub-solutions paired with an invariant for equation (\ref{maineqn}) as an analogue of the slope invariants in complex geometry. They are inspired by sub-solutions constructed in \cite{SW1} for Donaldson's $J$-equation and the Nakai-Moishezon criterion in algebraic geometry.

 We begin by letting $\cE$  be the set of admissible functions for equation (\ref{maineqn}) associated to the cone $\Gamma$  defined by
 \begin{equation}\label{eqn:e definition}
 \cE  =\cE  (M, g, \theta, f, \Gamma)= \{ u \in C^\infty(M)~|~ \lambda(\theta_u(p)) \in \Gamma, ~{\rm for~any}~ p\in M \}.
 \end{equation}
We assume that $\mathcal E$ is not empty. The following definition is a key ingredient in our approach. 
 \begin{definition}\label{gslope} 

 The sup-slope $\sigma$ for equation (\ref{maineqn})  associated to the cone $\Gamma$ is defined to be 
 \begin{equation}\label{supslo}
 \sigma = \inf_{ u \in \cE} \max_M   e^{-\psi} F[u]   \in [0, \infty). 
 \end{equation} 
 \end{definition}
\noindent In fact, $\sigma$ is always positive whenever $\cE\neq \emptyset$. We define $$f_{\infty, i}: \Gamma  \rightarrow (0, \infty]$$ by  
\begin{equation}
f_{\infty, i} (\lambda_1, ..., \lambda_i, ...,      \lambda_{n}) = \lim_{R\rightarrow \infty} f(\lambda_1, ..., \lambda_{i-1},  R, \lambda_{i+1}, ..., \lambda_n)
\end{equation}
for $i = 1, 2, ..., n$ and 
$$f_\infty (\lambda) = \min_{i=1, ..., n}  f_{\infty, i}(\lambda) .$$
In fact, either $f_\infty \equiv \infty$ on $\Gamma$ or $f_\infty$ is a positive symmetric concave function in $\Gamma$ satisfying (\ref{fcon1}) and  (\ref{fcon4}). 
Equivalently, one can define $f_\infty$ by
$$
%
f_\infty(\lambda)=f_\infty(\lambda_1,..., \lambda_n) = \lim_{R \to \infty} f(\lambda_1, ..., \lambda_{j-1}, R, \lambda_{j+1}, ..., \lambda_n), 
%
$$
 if $\lambda_j=\max \{ \lambda_1, \lambda_2, ..., \lambda_n\}$.

%
%
\begin{definition}The global subsolution operator 
$$F_\infty: \cE \rightarrow C^0(M) \cup \{\infty\}$$ 
is define by
\begin{equation}
F_\infty[u] = f_\infty(\lambda(\theta_u)). 
\end{equation}
\end{definition}
Although $f_{\infty, i} \left( \lambda (\theta_{\underline u} ) \right)$ depends on the local coordinates and the choice of $1\leq i\leq n$,   $  f_{\infty} \left( \lambda (\theta_{\underline u} )  \right)$ is indeed a globally defined function on $M$.%
 The following  are a set of new sub-solutions paired with the sup-slope for equation (\ref{maineqn}). 

\begin{definition} \label{subdef} Let $\sigma$ be the sup-slope for equation (\ref{maineqn}) in Definition \ref{gslope}. Then $\underline u \in \cE $ is said to be a  sub-solution  associated with  $\sigma$, if 
\begin{equation}\label{subsol}
  e^{-\psi} F_\infty[ \underline u]   > \sigma, 
\end{equation}
on $M$.
\end{definition}
The above sub-solution follows from the idea in \cite{SW1} by replacing the maximum eigenvalue of $\theta_{\underline u}$ by $\infty$.  We now can state our main result on the necessary and sufficient conditions for solving equation (\ref{maineqn}) in terms of sup-slopes and sub-solutions.

\begin{theorem}\label{mainthm1} Let $(M, g)$ be a closed Riemannian manifold of $\dim M=n$. Suppose $\theta$ is a smooth symmetric $2$-tensor, $\psi \in C^\infty(M)$ and $f$ satisfies (\ref{fcon1}),  (\ref{fcon2}),  (\ref{fcon3}),  (\ref{fcon4}) for an open convex symmetric cone $ \Gamma \subset \mathbb{R}^n$ containing $\Gamma_n$.  Then the following are equivalent. 

\begin{enumerate} 

\item There exists a smooth solution $u \in \cE $ solving equation (\ref{maineqn}), or equivalently, 
$$ e^{-\psi} F[u] = constant. $$

\item  There exists a sub-solution $\underline u \in \cE$ for equation (\ref{maineqn}) associated with the sup-slope $\sigma$, i.e., 
$$e^{-\psi} F_\infty[\underline u] > \sigma .$$

\item There exists  $u \in \cE$ satisfying
$$ \max_{ M} e^{-\psi}  F[u]  <  \min_{M  } e^{-\psi } F_{\infty}[u].$$

\item There exist  $\overline{u}, \underline u \in \cE$ satisfying
$$ \max_M e^{-\psi} F[ \overline u]  <  \min_{M  } e^{-\psi} F_{\infty} [\underline u] .$$

\end{enumerate}
Furthermore, if $u \in \cE$ solves equation (\ref{maineqn}), $u$ is unique up to a constant and   
$$F[u] = \sigma e^\psi,$$
where $\sigma$ is the sup-slope.

\end{theorem}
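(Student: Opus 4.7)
My plan is to prove the implications (1) $\Rightarrow$ (3) $\Rightarrow$ (4) $\Rightarrow$ (2) $\Rightarrow$ (1), and then derive the uniqueness and the identification $F[u]=\sigma e^\psi$ as corollaries. The first three implications are formal consequences of the definitions, while the hard step is (2) $\Rightarrow$ (1), which requires a continuity method paired with delicate a priori estimates.

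For the soft implications: if $u$ solves (\ref{maineqn}), then $e^{-\psi}F[u]\equiv e^c$ is constant, and the strict monotonicity (\ref{fcon1}) forces $f_\infty(\lambda) > f(\lambda)$ pointwise on $\Gamma$; compactness of $M$ upgrades this to a strict gap $\min_M e^{-\psi}F_\infty[u] > \max_M e^{-\psi}F[u]$, which is (3). The step (3) $\Rightarrow$ (4) is trivial ($\bar u=\underline u=u$), and (4) $\Rightarrow$ (2) follows from $\sigma \leq \max_M e^{-\psi}F[\bar u] < \min_M e^{-\psi}F_\infty[\underline u]$, immediate from the definition of the sup-slope.

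For (2) $\Rightarrow$ (1), I would run a continuity method along the family
\[
F[u_t] = \exp\bigl(t\psi + (1-t)\log F[\underline u] + c_t\bigr),\qquad t\in[0,1],
\]
with initial datum $u_0=\underline u$, $c_0=0$, and $u_t$ normalized by $\sup_M u_t = 0$. Openness of the solvability set follows from the implicit function theorem, using $F^{ij}>0$ from (\ref{fcon1}). For closedness, the first task is to verify that $\underline u$ remains a $\cC$-subsolution in Sz\'ekelyhidi's sense at each time $t$, i.e.\ $F_\infty[\underline u] > e^{t\psi + (1-t)\log F[\underline u] + c_t}$ on $M$. Bounds on $c_t$ come from testing at extrema of $u_t - \underline u$: at a maximum, $\theta_{u_t}\leq\theta_{\underline u}$ with (\ref{fcon1}) gives $c_t\leq t\max_M\log(e^{-\psi}F[\underline u])$, and the reverse inequality at a minimum produces the matching lower bound. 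Given the persistent $\cC$-subsolution together with the $C^0$ bound on $u_t$, Sz\'ekelyhidi's a priori $C^2$ estimates plus Evans--Krylov and Schauder regularity yield uniform $C^{k,\alpha}$ control and close the continuity argument.

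The deepest point will be the $L^\infty$ bound on $u_t$, since on a closed manifold the $\cC$-subsolution structure alone does not automatically give $C^0$ control; I expect to invoke the authors' recent $L^\infty$ technology based on comparison with auxiliary complex Monge-Amp\`ere equations and $L^p$-type stability estimates. Once smooth solvability is established, the identity $F[u]=\sigma e^\psi$ follows from a maximum-principle comparison: for any $v\in\cE$, at a maximum of $u-v$ one has $\theta_u\leq\theta_v$, hence $e^c=e^{-\psi}F[u]\leq e^{-\psi}F[v]\leq\max_M e^{-\psi}F[v]$, so $e^c\leq\sigma$; the reverse inequality $\sigma\leq e^c$ is definitional. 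Uniqueness up to a constant follows by applying the strong maximum principle to the linearized operator $L=\int_0^1 F^{ij}[(1-s)u_2+su_1]\,ds\cdot\nabla^2$, which annihilates $u_1-u_2$ and is strictly elliptic because $\cE$ is convex (from convexity of $\Gamma$ and the standard fact that the cone of symmetric tensors with eigenvalues in $\Gamma$ is convex) with $F^{ij}$ uniformly positive along the segment.
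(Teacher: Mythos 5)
Your decomposition of the equivalence into a cycle and your soft implications (1)\,$\Rightarrow$\,(3), (3)\,$\Rightarrow$\,(4), (4)\,$\Rightarrow$\,(2) and the uniqueness and identification $F[u]=\sigma e^\psi$ are all fine and essentially coincide with the paper's arguments. The genuine gap is in the hard step (2)\,$\Rightarrow$\,(1): starting the continuity path at $\underline u$ does not give a sharp enough control on $c_t$ to keep $\underline u$ a $\cC$-subsolution for the whole deformation. Your bound at a max of $u_t-\underline u$ yields $c_t\leq t\,\tilde c$ with $\tilde c:=\max_M\log(e^{-\psi}F[\underline u])$, and then the same computation you sketch gives $e^{\psi_t+c_t}\leq e^{\psi+\tilde c}$ pointwise. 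The $\cC$-subsolution condition you want to verify, namely $F_\infty[\underline u]>e^{\psi_t+c_t}$ on $M$, therefore reduces to $\min_M e^{-\psi}F_\infty[\underline u] > e^{\tilde c}=\max_M e^{-\psi}F[\underline u]$. But hypothesis (2) only guarantees $e^{-\psi}F_\infty[\underline u]>\sigma$, and $\sigma$ is the \emph{infimum} of $\max_M e^{-\psi}F[v]$ over all $v\in\cE$; there is no reason that the particular value $\max_M e^{-\psi}F[\underline u]$ should be close to $\sigma$, so the required gap may fail and the subsolution estimate is not preserved along your path.

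The paper fixes this by passing through (4): from $e^{-\psi}F_\infty[\underline u]>\sigma$, the definition of $\sigma$ produces a \emph{separate} super-solution $\overline u\in\cE$ with $e^{\overline c}:=\max_M e^{-\psi}F[\overline u]$ strictly below $\min_M e^{-\psi}F_\infty[\underline u]$, with a quantified gap $e^{\overline c}<(1+\delta)e^{\overline c}\leq \min_M e^{-\psi}F_\infty[\underline u]$. The continuity path then starts at $\overline u$ (i.e.\ $\psi_t=(1-t)\overline\psi+t\psi$, $\overline\psi=\log F[\overline u]$), which gives $c_t\leq t\overline c$, and combined with $\overline\psi\leq\psi+\overline c$ one gets $e^{\psi_t+c_t}\leq e^{\psi+\overline c}$ uniformly, and the $(1+\delta)$-gap is exactly what shows $\underline u$ is a $\cC_{e^{\psi_t+c_t},r,R}$-subsolution uniformly in $t$. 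So you must use the pair $(\overline u,\underline u)$ from (4) as two \emph{different} functions: $\overline u$ anchors the deformation and controls $c_t$, while $\underline u$ supplies the $\cC$-subsolution. A smaller point: on a closed manifold, Sz\'ekelyhidi's estimates already produce the $C^0$ bound directly from the $\cC$-subsolution, so you do not need the separate $L^\infty$ technology (that machinery is needed only in the singular/degenerating setting of Theorem~\ref{mainthm4}, not for Theorems~\ref{mainthm1}--\ref{mainthm2}).
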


Condition (4) of Theorem \ref{mainthm1} can be viewed as a pair of super and sub solutions. 
An obvious necessary condition for solving  equation (\ref{maineqn}) is 
\begin{equation}\label{semist}
\sup_{u\in \cE} \min_M e^{-\psi} F[u] =\inf_{u\in \cE} \max_M  e^{-\psi} F[u].
\end{equation}
In general, (\ref{semist}) is not a sufficient condition because it corresponds to a semi-stable condition. However, in many cases, one still expects a unique solution with mild singularities to (\ref{maineqn}) under the assumption of  (\ref{semist}) (c.f. \cite{DMS}).  
A significant consequence of Theorem \ref{mainthm1} is that  the unique normalizing constant $c$ in equation (\ref{maineqn}) can be identified as
$$e^c =  \sigma$$
in relation to the sup-slope.

Yau's solution \cite{Y} to the Calabi  conjecture initiated the study of global complex equations of hessian type and tremendous progress has been made in the past decades, e.g. \cite{Kol, HMW, DK, Sz} and many others.  There  also has  been considerable progress recently in PDE methods for $L^\infty$-estimates of fully non-linear equations \cite{GPT, GP}  and new geometric estimates on singular complex spaces \cite{GPSS1, GPSS2}. 

We now consider the complex analogue of Theorem \ref{mainthm1}. Let $(X, g)$ be a closed complex manifold of $\dim_{\mathbb{C}} X= n$ equipped with a hermitian metric $g$. The metric $g$ is uniquely associated to a positive definite hermitian form
$$\omega= \sqrt{-1} \sum_{i, j=1}^n g_{i\bar j} dz_i \wedge d\bar z_j$$
in local holomorphic coordinates. 
Suppose $\theta $ is a smooth hermtian $(1,1)$-form on $X$ given by $\theta= \sqrt{-1} \sum_{i, j=1}^n \theta_{i\bar j} dz_i \wedge d\bar z_j $ in local holomorphic coordinates.  For any $u\in C^\infty(X)$, 
 $$\theta_u = \theta + \ddbar u=  \sqrt{-1} \sum_{i, j=1}^n \left (\theta_{i\bar j} + u_{i\bar j} \right) dz_i \wedge d\bar z_j$$
 is also a global hermitian $(1,1)$-form.    Let $\lambda (\theta_u)= (\lambda_1, ..., \lambda_n)$ be the eigenvalues of $\theta_u$ with respect to $\omega$ on $X$. 
 %
 
 Similar to the discussion in the Riemannian setting, we can consider  the following complex hessian type equation
 \begin{equation}\label{maineqn2}
 F[u] = f(\lambda(\theta_u))=  e^{\psi + c}, ~ c\in \mathbb{R}
 \end{equation}
for any given $\psi\in C^\infty(X)$.

We define the set of admissible functions by
 $$\cE =\cE (X, g, \theta, f, \Gamma)= \{ u \in C^\infty(X)~|~ ~\lambda(\theta_u(p)) \in \Gamma, ~{\rm for~all}~p\in X \}. $$ 
 and the sup-slope $\sigma$ for equation (\ref{maineqn2}) by
 $$\sigma = \inf_{ u \in \cE} \max_X  e^{-\psi} F[u]   \in [0, \infty) $$
 analogous to Definition \ref{gslope}. 
Similarly, $\underline u \in \cE$ is said to be a  sub-solution associated to the sup-slope $\sigma$ for equation (\ref{maineqn2}), if 
$$    e^{-\psi }  F_\infty [\underline u]  > \sigma$$
 on $X$.

We have the following result almost identical to Theorem \ref{mainthm1} in the hermitian case. 

\begin{theorem}\label{mainthm2} Let $(X, \omega)$ be a  closed hermitian manifold  of $\dim_{\mathbb{C}} X =n$. Suppose $\theta$ is a smooth hermitian form, $\psi \in C^\infty(X)$, and $f$ satisfies (\ref{fcon1}),  (\ref{fcon2}),  (\ref{fcon3}),  (\ref{fcon4}) for an open convex symmetric cone $\Gamma \subset \mathbb{R}^n$ containing $\Gamma_n$.  Then the following are equivalent.  
\begin{enumerate} 

\smallskip
\item There exists a smooth solution $u \in \cE $ solving equation (\ref{maineqn2}).

\medskip

\item  There exists a sub-solution  for equation (\ref{maineqn2}) associated with the sup-slope $\sigma$.

\medskip

\item There exists  $u \in \cE$ satisfying
$$ \max_X e^{-\psi}  F[u]  <  \min_{ X } e^{-\psi } F_{\infty} [u].$$

\item There exist  $\overline{u}, \underline u \in \cE$ satisfying
$$ \max_X e^{-\psi} F[\overline u]  <  \min_{ X  } e^{-\psi} F_{\infty} [\underline u] .$$

\end{enumerate}
Furthermore, if $u \in \cE$ solves equation (\ref{maineqn2}), $u$ is unique up to a constant and   
$$F[u] = \sigma e^\psi,$$
where $\sigma$ is the sup-slope for equation (\ref{maineqn2}).

\end{theorem}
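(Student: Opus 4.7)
The plan is to prove the chain of implications $(1) \Rightarrow (2)$, $(1) \Rightarrow (3) \Rightarrow (4) \Rightarrow (2)$, together with the main existence step $(2) \Rightarrow (1)$, while simultaneously identifying the constant and establishing uniqueness via the maximum principle. Suppose first that $u \in \cE$ solves $F[u] = e^{\psi + c}$. Since $e^{-\psi} F[u] = e^c$ is constant on $X$, the definition of the sup-slope yields $\sigma \le e^c$. For the reverse, let $v \in \cE$ be arbitrary; at an interior maximum $p$ of $u - v$ on the closed manifold $X$, one has $\theta_u(p) \le \theta_v(p)$ as hermitian forms, so the monotonicity \eqref{fcon1} gives $F[u](p) \le F[v](p)$. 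Hence $e^c \le e^{-\psi(p)} F[v](p) \le \max_X e^{-\psi} F[v]$, and the infimum over $v \in \cE$ yields $e^c \le \sigma$. Thus $e^c = \sigma$, and the same comparison applied to two solutions forces their constants to agree and their difference to be constant. Since $\partial f/\partial \lambda_i > 0$ forces $f_\infty > f$ strictly on $\Gamma$, compactness of $X$ gives $\min_X e^{-\psi} F_\infty[u] > \sigma = \max_X e^{-\psi} F[u]$, which simultaneously establishes $(1) \Rightarrow (2)$ and $(1) \Rightarrow (3)$. The implications $(3) \Rightarrow (4)$ and $(4) \Rightarrow (2)$ are immediate from $\sigma \le \max_X e^{-\psi} F[\overline u]$.

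The heart of the proof is the existence step $(2) \Rightarrow (1)$. The key observation is that the sub-solution condition $e^{-\psi} F_\infty[\underline u] > \sigma$ is precisely the $\mathcal{C}$-subsolution condition of Szekelyhidi \cite{Sz} for the target equation $F[u] = \sigma e^\psi$, since $f_\infty = \min_i f_{\infty, i}$ and the $\mathcal{C}$-subsolution condition is equivalent to requiring $f_{\infty,i}(\lambda(\theta_{\underline u}(p))) > \sigma e^{\psi(p)}$ for every $p$ and every $i$. I would then run the continuity method along the path
\[
F[u_t] = e^{\psi_t + c_t}, \qquad \psi_t(x) = (1-t)\log F[\underline u](x) + t\bigl(\psi(x) + \log \sigma\bigr),
\]
with $u_0 = \underline u$, $c_0 = 0$, and the constant $c_t \in \R$ determined by the normalization $\sup_X u_t = 0$. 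The gap
\[
G_t(x) = \log F_\infty[\underline u](x) - \psi_t(x) = (1-t)\log\!\bigl(F_\infty[\underline u]/F[\underline u]\bigr)(x) + t\bigl(\log F_\infty[\underline u] - \psi - \log \sigma\bigr)(x)
\]
is a convex combination of two continuous, strictly positive functions on $X$, hence bounded below by some $\delta_0 > 0$ uniformly in $(t,x) \in [0,1] \times X$.

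Openness at each solvable $t$ follows from the implicit function theorem applied to the elliptic linearization guaranteed by \eqref{fcon1}. For closedness, comparing $u_t$ with $\underline u$ at the extrema of $u_t - \underline u$ yields uniform bounds on $|c_t|$; combined with the persistent $\mathcal{C}$-subsolution condition (the gap $\delta_0$ absorbing the fluctuation of $c_t$), the a priori $C^0$, $C^1$, and $C^{2,\alpha}$ estimates of \cite{Sz} apply uniformly in $t \in [0,1]$, the last through the Evans--Krylov theory using the concavity \eqref{fcon2}. This produces a smooth $u_1 \in \cE$ with $F[u_1] = e^{\psi + \log \sigma + c_1}$, and the maximum-principle identification from the first paragraph forces $c_1 = 0$, completing (1).

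I expect the main obstacle to be the uniform control of $c_t$ along the continuity path so that the subsolution gap $G_t > \delta_0$ persists after absorbing $c_t$ into the right-hand side. In the hermitian non-K\"ahler setting there is no closed volume form pinning down $c_t$ a priori, so its bound must come directly from the $\mathcal{C}$-subsolution condition and the maximum principle, with care taken near the endpoints of the path. Once this is in hand, the remaining openness, higher-order estimates, and uniqueness fit into the framework of \cite{Sz} and its hermitian refinements.
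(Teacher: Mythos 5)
Your overall architecture tracks the paper's, but the key existence step is attempted along a chain $(2)\Rightarrow(1)$ that has a genuine gap, one you half-acknowledge in the last paragraph but do not resolve. The issue is quantitative: the continuity path is anchored at the \emph{sub}-solution $\underline u$, so comparing $u_t$ with $\underline u$ at a maximum of $u_t - \underline u$ gives only
$$c_t \;\le\; t\,\max_X\bigl(\log e^{-\psi}F[\underline u] - \log\sigma\bigr) \;=:\; t\,M_0.$$
To keep $\underline u$ a $\cC$-subsolution along the path you need $c_t < \min_X G_t$, where $\min_X G_t \ge (1-t)a + tb$ with $a = \min_X\log(F_\infty[\underline u]/F[\underline u]) > 0$ and $b = \min_X\bigl(\log e^{-\psi}F_\infty[\underline u] - \log\sigma\bigr) > 0$. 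Condition (2) gives $b > 0$ but says nothing about the relation between $M_0$ and $b$: it is entirely possible that $\max_X e^{-\psi}F[\underline u] \ge \min_X e^{-\psi}F_\infty[\underline u]$, i.e.\ $M_0 \ge b$, in which case $(1-t)a + tb > tM_0$ fails as $t\to 1$ and the $\cC$-subsolution condition is lost before you reach the endpoint. Saying the gap $\delta_0$ ``absorbs the fluctuation of $c_t$'' presupposes exactly what needs to be proved.

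The paper closes this by going through $(2)\Rightarrow(4)\Rightarrow(1)$: since $\sigma$ is an infimum and $\min_X e^{-\psi}F_\infty[\underline u] > \sigma$, there exists $\overline u \in \cE$ with $\max_X e^{-\psi}F[\overline u] < \min_X e^{-\psi}F_\infty[\underline u]$, and the continuity method is launched from the \emph{super}-solution $\overline u$, with $\psi_t = (1-t)\log F[\overline u] + t\psi$. Comparing with $\overline u$ then yields the sharper bound $c_t \le t\,\overline c$ with $e^{\overline c} = \max_X e^{-\psi}F[\overline u]$, which by the choice of $\overline u$ sits strictly below $\min_X e^{-\psi}F_\infty[\underline u]$; this is exactly the paper's Proposition \ref{subpsi} and Proposition \ref{pinsubsol}, which keep $\underline u$ a uniform $\cC_{r,R}$-subsolution for all $t$. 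You should insert this $(2)\Rightarrow(4)$ step and re-anchor the path at $\overline u$ rather than $\underline u$. One smaller point: in $(1)\Rightarrow(3)$ you pass from the pointwise strict inequality $F_\infty[u] > F[u]$ to the strict inequality of $\min_X$ over $\max_X$ by ``compactness,'' which tacitly uses that $F_\infty[u]$ is either identically $+\infty$ or continuous on $X$; this dichotomy is not automatic and must be established (the paper does so via the concavity of $f$, Lemmas \ref{dico}--\ref{dico2}), or else one should argue directly at a limiting point as in Lemma \ref{3to2}.
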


  Theorem \ref{mainthm2} can be applied to determine the unique normalizing constant for complex Monge-Amp\`ere equations on a hermitian manifold. 
 
\begin{corollary} \label{detcon}Let $(X, \omega)$ be a closed hermitian manifold of $\dim_{\mathbb{C}} X =n$.   For any $\psi\in C^\infty(X)$, let $\varphi \in C^\infty(X)\cap {\rm PSH}(X, \omega)$ be the unique solution to the complex Monge-Amp\`ere equation 
\begin{equation}\label{maeqn}
(\omega + \ddbar \varphi)^n = e^{\psi + c} \omega^n, ~ \max_X \varphi =0 
\end{equation}
for some $c\in \mathbb{R}$. Then 
$$ e^c = \inf_{u \in C^\infty(X)\cap {\rm PSH}(X, \omega)} \max_X  \frac{  e^{-\psi} (\omega + \ddbar u)^n } { \omega^n}. $$

\end{corollary}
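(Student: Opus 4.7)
The plan is to recognize the Monge-Amp\`ere equation (\ref{maeqn}) as an instance of (\ref{maineqn2}) and invoke Theorem~\ref{mainthm2}. I would take $\theta = \omega$, cone $\Gamma = \Gamma_n$, and the symmetric function $f(\lambda) = (\lambda_1 \cdots \lambda_n)^{1/n}$ on $\Gamma_n$. For any smooth $u$ with $\omega_u := \omega + \ddbar u > 0$ one has $F[u] = f(\lambda(\omega_u)) = \bigl(\omega_u^n/\omega^n\bigr)^{1/n}$, so $(\omega + \ddbar \varphi)^n = e^{\psi + c}\omega^n$ is equivalent to $F[\varphi] = e^{\tilde\psi + \tilde c}$ with $\tilde\psi := \psi/n$ and $\tilde c := c/n$. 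The admissible set is $\cE = \{u \in C^\infty(X) : \omega + \ddbar u > 0\}$, which differs from $C^\infty(X) \cap \PSH(X, \omega)$ only by the strictness of positivity.

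Next I would verify the hypotheses on $f$. The geometric mean satisfies (\ref{fcon1})--(\ref{fcon4}): the partial derivatives $\partial_{\lambda_i} f = f/(n\lambda_i)$ are positive on $\Gamma_n$, the function is concave on the positive orthant, it vanishes precisely on $\partial \Gamma_n$, and $f(R\lambda) = R f(\lambda) \to \infty$. A direct computation gives $f_{\infty,i}(\lambda) = \lim_{R\to\infty} \bigl(R \prod_{j \neq i}\lambda_j\bigr)^{1/n} = \infty$ on $\Gamma_n$, so $f_\infty \equiv \infty$; in particular the sub-solution inequality (\ref{subsol}) holds vacuously for any element of $\cE$, and condition~(2) of Theorem~\ref{mainthm2} is trivially verified by $\underline u = 0$. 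The theorem then produces a (unique up to additive constant) smooth solution $\varphi \in \cE$ satisfying $F[\varphi] = \tilde\sigma\, e^{\tilde\psi}$, where $\tilde\sigma$ is the sup-slope of the rescaled equation, agreeing with the known hermitian Calabi-Yau solution after fixing $\max_X \varphi = 0$.

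To conclude, matching $F[\varphi] = e^{\tilde\psi + c/n}$ with $F[\varphi] = \tilde\sigma\, e^{\tilde\psi}$ gives $e^{c/n} = \tilde\sigma$. Using the monotonicity of $t \mapsto t^{1/n}$ to commute it with $\max_X$ and $\inf_{u \in \cE}$,
\[
\tilde\sigma = \inf_{u \in \cE} \max_X \bigl(e^{-\psi}\,\omega_u^n/\omega^n\bigr)^{1/n} = \Bigl(\inf_{u \in \cE} \max_X e^{-\psi}\,\omega_u^n/\omega^n\Bigr)^{1/n},
\]
so raising to the $n$-th power yields $e^c = \inf_{u \in \cE} \max_X e^{-\psi}\,\omega_u^n/\omega^n$. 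To extend the infimum from $\cE$ to $C^\infty(X) \cap \PSH(X,\omega)$, I would note that for $u \in C^\infty \cap \PSH$ the scaled function $u_\epsilon := (1-\epsilon) u$ satisfies $\omega + \ddbar u_\epsilon = \epsilon \omega + (1-\epsilon)(\omega + \ddbar u) \geq \epsilon \omega > 0$, so $u_\epsilon \in \cE$, and $(\omega + \ddbar u_\epsilon)^n \to (\omega + \ddbar u)^n$ uniformly as $\epsilon \to 0$, which shows that the two infima coincide.

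I do not expect a substantive obstacle here: the entire content of the corollary is packaged inside Theorem~\ref{mainthm2}, and the proof is essentially a bookkeeping translation between $f = (\prod \lambda_i)^{1/n}$ and the standard Monge-Amp\`ere framing. The only mildly delicate point is verifying that the infimum over strictly positive $\omega_u$ agrees with the infimum over $C^\infty \cap \PSH$, which is handled by the scaling argument above.
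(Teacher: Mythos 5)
Your proof is correct and takes the same route the paper intends (the paper gives no explicit proof, stating only that the corollary follows from Theorem~\ref{mainthm2}). You correctly recast the Monge-Amp\`ere equation as an instance of (\ref{maineqn2}) with $f=(\lambda_1\cdots\lambda_n)^{1/n}$ on $\Gamma_n$ after absorbing a factor of $1/n$ into $\psi$ and $c$, observe that $f_\infty\equiv\infty$ makes the sub-solution condition vacuous, apply the final clause of Theorem~\ref{mainthm2} (equivalently Lemma~\ref{uniq1}) to identify the normalizing constant with the sup-slope, and then commute the monotone map $t\mapsto t^{1/n}$ through $\max$ and $\inf$ to recover the stated formula; the scaling argument $u_\epsilon=(1-\epsilon)u$ cleanly handles the passage from the strict admissibility set $\cE$ to $C^\infty(X)\cap\PSH(X,\omega)$.
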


 If $\omega$ is a K\"ahler form, the normalizing constant $c$ is explicitly given by
$$e^c = \frac{\int_X \omega^n}{\int_Xe^\psi \omega^n} .$$
Equation (\ref{maeqn}) is uniquely solved in  \cite{TW}, however it has been an open question how to calculate the constant $c$ in equation (\ref{maeqn}), when the hermitian form $\omega$ is not closed.  %
Corollary \ref{detcon} could have applications in solving degenerate complex Monge-Amp\`ere equations on hermitian manifolds in relation to a conjecture of Demailly-Paun \cite{DP} on the Fujiki class.

 We will also apply Theorem \ref{mainthm2} to complex hessian quotient equations. The $J$-equation introduced by Donaldson \cite{Do} is a special hessian quotient equation on a closed K\"ahler manifold with constant on the right hand side. More precisely, let $X$ be a closed K\"ahler manifold of $\dim_{\mathbb{C}}X=n$ with two K\"ahler classes $\alpha$ and $\beta$. For a given K\"ahler form $\omega\in \beta$, one looks for a K\"ahler form $\theta \in \alpha$ satisfying
\begin{equation}\label{constj}
\frac{\theta^{n-1}\wedge \omega}{\theta^n} = c,  
\end{equation}
where $c= \frac{\alpha^{n-1}\cdot \beta}{\alpha^n}$ is a complex invariant due to the K\"ahler conditions of $\alpha$ and $\beta$. 
It is proved in \cite{W1, SW1} that  equation (\ref{constj}) can be solved if and only if there exists a smooth sub-solution $\underline \theta \in \alpha$, which is a K\"ahler form satisfying
\begin{equation}\label{swsub}
(n-1) \underline \theta^{n-2} \wedge \omega < nc \underline\theta^n.
\end{equation}
It is conjectured \cite{LS}  that the sub-solution condition (\ref{swsub}) is equivalent to  the following nonlinear Nakai-Moishezon criterion: for any $m$-dimensional proper subvariety $Z$ of $X$, 
\begin{equation}\label{LScon}
\left. m \frac{\alpha^{m-1}\cdot \beta}{\alpha^m} \right|_Z  < n \frac{\alpha^{n-1}\cdot \beta}{\alpha^n}.
\end{equation} 
The above condition can also be viewed as an analogue of the slope stability for hermitian bundles over K\"ahler manifolds. The conjecture is fully settled by \cite{ChG, DaP, So} in the K\"ahler case. However, the solvability of the non-constant $J$-equation for a given $\psi\in C^\infty(X)$
\begin{equation}\label{nconstj}
\frac{\theta^{n-1}\wedge \omega}{\theta^n} = e^{\psi+c},  ~c\in \mathbb{R}
\end{equation}
has been open for the past ten years.  The $J$-equation can be further defined on a hermitian manifold  $X$  with constant or non-constant functions on the right hand side. This has also been extensively studied  and a sufficient condition is obtained  by Sun \cite{Su} as a generalization of (\ref{swsub}).   We can now settle these problems by applying Theorem \ref{mainthm2}.  The sup-slope and sub-solution criterion directly give a necessary and sufficient condition for equation (\ref{nconstj}), since it is equivalent to the standard hessian quotient equation $\frac{n S_n}{S_{n-1}}=e^{-\psi - c}$. 

More precisely,  we let $X$ be a closed hermitian manifold of $\dim_{\mathbb{C}}X=n$ equipped with two hermtian metrics $\omega$ and $\chi$.  Given $\psi\in C^\infty(X)$, we define the $J$-slope $\xi$ for $(X, \omega, \chi, \psi)$  by 
\begin{equation}\label{jslop0}
\xi = \xi(X, \omega, \chi, \psi) = \sup_{u\in C^\infty(X)\cap {\rm PSH}(X, \chi) } \min_ X \left(  e^{-\psi} \frac{\chi_u^{n-1}\wedge \omega}{\chi_u^n} \right),
\end{equation}
where $\chi_u = \chi+ \ddbar u$. Then the following theorem immediately follows from Theorem \ref{mainthm2}.

\begin{theorem} \label{mainthm3} Suppose $X$ is a closed hermitian manifold of $\dim_{\mathbb{C}}X=n$. For any $\psi\in C^\infty(X)$ and  smooth hermtian metrics $\omega$ and $\chi$, the hermitian $J$-equation
\begin{equation}\label{hjeqn}
\frac{\chi_u^{n-1}\wedge \omega}{\chi_u^n} = e^{\psi+ c} 
\end{equation}
 admits a solution $u \in C^\infty(X)\cap {\rm PSH}(X, \chi)$ if and only if there exists a sub-solution $\underline u \in C^\infty(X)\cap {\rm PSH}(X, \chi)$ associated with the $J$-slope (\ref{jslop0}) satisfying
\begin{equation}
(n-1) e^{-\psi} \chi_{\underline u}^{n-2}\wedge \omega < n \xi  \chi_{\underline u} ^{n-1}.
\end{equation}
In particular, the solution is unique up to a constant and the constant $c$ in (\ref{hjeqn}) is given by $e^c = \xi$. 

\end{theorem}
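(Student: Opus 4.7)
The plan is to deduce Theorem \ref{mainthm3} from Theorem \ref{mainthm2} by casting the $J$-equation as a concave hessian quotient equation. Inverting \eqref{hjeqn} to $\chi_u^n/(\chi_u^{n-1}\wedge\omega)=e^{-\psi-c}$ and passing to the eigenvalues $\lambda=\lambda(\chi_u)$ of $\chi_u$ relative to $\omega$ yields
$$f(\lambda(\chi_u))=e^{-\psi-c},\qquad f(\lambda):=\frac{n\,S_n(\lambda)}{S_{n-1}(\lambda)}=\frac{n}{\sum_{i=1}^n 1/\lambda_i},$$
on $\Gamma:=\Gamma_n$. The function $f$ is the harmonic mean of $\lambda_1,\ldots,\lambda_n$; it is smooth, symmetric, strictly increasing in each $\lambda_i$, positive on $\Gamma_n$ with $f|_{\partial\Gamma_n}=0$, satisfies $f(R\lambda)=Rf(\lambda)\to\infty$, and is concave by the classical concavity of the harmonic mean, so all of \eqref{fcon1}--\eqref{fcon4} are in force. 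After the involution $\psi\mapsto-\psi$, $c\mapsto-c$ the equation is in the standard form \eqref{maineqn2}, and swapping reciprocals with $\sup_u\min_X$ in the definition \eqref{jslop0} of $\xi$ identifies the sup-slope as
$$\sigma\;=\;\inf_{u\in\cE}\max_X e^{\psi}\,\frac{\chi_u^n}{\chi_u^{n-1}\wedge\omega}\;=\;\frac{1}{\xi}.$$

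Next I would compute the companion operator. If $\lambda_k=\max_i\lambda_i$ then
$$f_\infty(\lambda)=\lim_{R\to\infty}\frac{n}{1/R+\sum_{i\neq k}1/\lambda_i}=\frac{n}{\sum_{i\neq k}1/\lambda_i},$$
so the sub-solution criterion $e^{\psi}F_\infty[\underline u]>\sigma=1/\xi$ of Theorem \ref{mainthm2} reads pointwise on $X$ as
$$\sum_{i\neq k}\frac{1}{\lambda_i}<n\xi\,e^{\psi}\qquad(\lambda_k=\max_i\lambda_i)$$
for the eigenvalues of $\chi_{\underline u}$ with respect to $\omega$. The remaining step is to recognize this scalar inequality as the $(n-1,n-1)$-form inequality $(n-1)e^{-\psi}\chi_{\underline u}^{n-2}\wedge\omega<n\xi\,\chi_{\underline u}^{n-1}$ of Theorem \ref{mainthm3}. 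Picking local coordinates diagonalizing $\chi_{\underline u}$ and $\omega$ simultaneously at a point makes both sides diagonal $(n-1,n-1)$-forms, so the form inequality is equivalent to the system of scalar inequalities obtained by contracting with $\sqrt{-1}\,dz_k\wedge d\bar z_k$ for each $k$. Direct calculation gives
$$\chi_{\underline u}^{n-1}\wedge\sqrt{-1}\,dz_k\wedge d\bar z_k=\frac{\omega^n}{n}\prod_{j\neq k}\lambda_j,\qquad \chi_{\underline u}^{n-2}\wedge\omega\wedge\sqrt{-1}\,dz_k\wedge d\bar z_k=\frac{\omega^n}{n(n-1)}\sum_{p\neq k}\prod_{j\neq k,p}\lambda_j,$$
so the $k$-th scalar inequality simplifies to $\sum_{p\neq k}1/\lambda_p<n\xi e^{\psi}$; the binding case is $k=\arg\max\lambda$, which matches the condition extracted from $f_\infty$.

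With this dictionary in place, Theorem \ref{mainthm2} yields both the equivalence and the uniqueness up to an additive constant for Theorem \ref{mainthm3}; the identity $F[u]=\sigma e^{\psi'}$ in the transformed variables unfolds to $\chi_u^{n-1}\wedge\omega/\chi_u^n=\xi e^{\psi}$, forcing $e^c=\xi$. The only point that deserves care is the conversion between the $(n-1,n-1)$-form inequality and the eigenvalue inequality in the second paragraph, but because $f$ is a hessian quotient with a transparent $f_\infty$ this reduces to elementary symmetric polynomial bookkeeping, and I do not anticipate a substantive obstacle beyond this routine identification.
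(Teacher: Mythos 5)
Your proposal is correct and follows the same route the paper uses: Theorem \ref{mainthm3} is deduced by recasting the $J$-equation \eqref{hjeqn} as the concave hessian quotient $f(\lambda)=\tfrac{nS_n}{S_{n-1}}=e^{-\psi-c}$ on $\Gamma_n$, invoking Theorem \ref{mainthm2}, and translating back. The paper itself gives no further argument ("it is equivalent to the standard hessian quotient equation $\frac{nS_n}{S_{n-1}}=e^{-\psi-c}$... the following theorem immediately follows from Theorem \ref{mainthm2}"), so your write-up actually supplies the omitted bookkeeping, and all of it checks out: $\sigma=1/\xi$ via the inversion of $\inf\max$ and $\sup\min$, $f_\infty(\lambda)=n/\sum_{i\neq k}\lambda_i^{-1}$ with $k=\arg\max\lambda_i$, the coordinatewise identification of the $(n-1,n-1)$-form inequality with the scalar criterion $\sum_{p\neq k}\lambda_p^{-1}<n\xi e^{\psi}$, and the unfolding $e^{c'}=\sigma$, $c'=-c$ giving $e^{c}=\xi$.
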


We return to the K\"ahler case where singularities are allowed for the underlying variety. The Lejmi-Szekelyhidi conjecture is proved in \cite{So} for the $J$-equation (\ref{constj}) on a normal K\"ahler variety embedded in an ambient (open) K\"ahler manifold.  Naturally, one would wish to solve the  $J$-equation  (\ref{nconstj}) on a normal K\"ahler variety.  

 Let $X$ be an $n$-dimensional normal projective variety with two K\"ahler classes $\alpha$ and $\beta$. For any smooth K\"ahler metric $\omega \in \beta$ and $\psi\in C^\infty(X)$ (the smoothness is defined via local or global holomorphic embeddings), we can extend the definition of  the $J$-slope to a normal variety $X$ by
 \begin{equation}\label{jslop}
 \xi= \xi(X, \alpha, \omega, \psi)= \sup_{\chi\in   \cK(\alpha) } \inf_ {X^\circ} \left( e^{-\psi} \frac{\chi^{n-1}\wedge \omega}{\chi^n} \right), 
 \end{equation}
where $\cK(\alpha)$ is the set of all smooth K\"ahler metrics in $\alpha$, and $X^\circ$ is the regular part of $X$. We establish a sufficient condition for solving equation (\ref{nconstj}) on $X$.

\begin{theorem} \label{mainthm4}  Let $X$ be a  normal projective variety of $\dim_{\mathbb{C}} X=n$.   For $\psi \in C^\infty(X)$, a K\"ahler class $\alpha$ and a smooth K\"ahler metric $\omega$ on $X$,   there exists a   K\"ahler current $\theta\in \alpha$ with bounded local potentials solving the $J$-equation
\begin{equation}\label{sinjeqn}
\theta^{n-1} \wedge \omega  = e^{\psi+c} \theta^n
\end{equation}
on $X$ for some $c \in \mathbb{R^+}$, if  there exist $\epsilon>0$ and a  smooth K\"ahler form  $\underline\theta \in \alpha$ satisfying 
\begin{equation}\label{sinswsub} 
(n-1)  \underline\theta^{n-2}\wedge \omega \leq n (\xi-\epsilon) e^{\psi} \underline\theta^{n-1}, 
\end{equation}
where $\xi$ is the $J$-slope in \eqref{jslop}.

\end{theorem}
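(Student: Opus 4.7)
The plan is to reduce to the smooth hermitian case (Theorem \ref{mainthm3}) via a desingularization and an approximation scheme, mirroring the strategy used for the constant $J$-equation in \cite{So}. Let $\pi:\tilde X\to X$ be a resolution of singularities with exceptional divisor $E$, fix a K\"ahler metric $\omega_{\tilde X}$ on $\tilde X$, and for each small $\delta>0$ set
\[
\tilde\omega_\delta=\pi^*\omega+\delta\omega_{\tilde X},\qquad \tilde\theta_\delta=\pi^*\underline\theta+\delta\omega_{\tilde X}\in\pi^*\alpha+\delta[\omega_{\tilde X}].
\]
Both are smooth K\"ahler forms on $\tilde X$ and $\tilde\psi:=\pi^*\psi$ is smooth. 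The goal is to solve the smooth hermitian $J$-equation
\[
(\tilde\theta_\delta+\ddbar u_\delta)^{n-1}\wedge\tilde\omega_\delta=e^{\tilde\psi+c_\delta}(\tilde\theta_\delta+\ddbar u_\delta)^n
\]
on $\tilde X$ using Theorem \ref{mainthm3}, extract estimates uniform in $\delta$, and pass to the limit $\delta\to 0$.

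To apply Theorem \ref{mainthm3}, I would verify that $\tilde\theta_\delta$ itself serves as the sub-solution. Let $\tilde\xi_\delta$ denote the $J$-slope for $(\tilde X,\tilde\omega_\delta,\pi^*\alpha+\delta[\omega_{\tilde X}],\tilde\psi)$. Feeding the test forms $\pi^*\chi+\delta\omega_{\tilde X}$ for $\chi\in\cK(\alpha)$ into the supremum defining $\tilde\xi_\delta$ and applying dominated convergence on $\tilde X\setminus E$ shows $\tilde\xi_\delta\geq \xi-C\delta$ for a constant $C$ depending only on $\omega,\underline\theta,\omega_{\tilde X}$. Pulling (\ref{sinswsub}) back to $\tilde X$ and expanding the wedge products in powers of $\delta$, one checks that the built-in slack $\epsilon$ absorbs all $O(\delta)$ perturbation terms once $\delta$ is small, so
\[
(n-1)\tilde\theta_\delta^{n-2}\wedge\tilde\omega_\delta<n\tilde\xi_\delta\,e^{\tilde\psi}\tilde\theta_\delta^{n-1}
\]
on $\tilde X$; near $E$ the term $\delta\omega_{\tilde X}$ dominates $\pi^*\underline\theta$ in the expansion, so both sides scale with the same power of $\delta$ and the inequality there follows from the pointwise bound $\omega\leq C\omega_{\tilde X}$ after rescaling. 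Theorem \ref{mainthm3} then yields a smooth solution $u_\delta$ on $\tilde X$ with $e^{c_\delta}=\tilde\xi_\delta$.

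After normalizing $\sup_{\tilde X}u_\delta=0$, a $\delta$-uniform $L^\infty$ bound for $u_\delta$ follows from the auxiliary Monge-Amp\`ere / $L^\infty$-estimate technique of \cite{GPT, GP, GPSS1}, where the structural gap $\epsilon$ in (\ref{sinswsub}) feeds into the auxiliary equation and controls the estimate uniformly as the reference forms degenerate along $E$. Standard Szekelyhidi-type second-order estimates then give uniform $C^k_{\mathrm{loc}}$ bounds on compact subsets of $\tilde X\setminus E$. Extracting a subsequential limit, $u_\delta\to u_\star$ in $L^\infty(\tilde X)\cap C^\infty_{\mathrm{loc}}(\tilde X\setminus E)$ and $c_\delta\to c_\star=\log\xi$, and the limit solves (\ref{sinjeqn}) on $\tilde X\setminus E\cong X^\circ$. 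Pushing forward by $\pi$ and using that $\underline\theta$ is K\"ahler on $X$ while $u_\star$ is bounded, $\theta:=\underline\theta+\ddbar(\pi_*u_\star)$ is a K\"ahler current in $\alpha$ with bounded local potentials satisfying (\ref{sinjeqn}). The main obstacle is the $\delta$-uniform $L^\infty$ bound: both $\tilde\omega_\delta$ and $\tilde\theta_\delta$ degenerate along $E$, so Ko{\l}odziej-type pluripotential arguments do not apply directly, and one must invoke the quantitative $L^\infty$ theory of \cite{GPT, GPSS1} in a form that tracks the structural gap $\epsilon$ throughout the degeneration.
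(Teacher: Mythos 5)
Your overall strategy — resolve singularities, approximate by smooth hermitian $J$-equations on the resolution, invoke the sub-solution criterion (Theorem~\ref{mainthm3}) together with the $L^\infty$-theory of \cite{GPT,GP}, and pass to the limit — matches the paper's. The $\delta$-uniform $L^\infty$-estimate step and the limit argument are also in line with what the paper does (the paper verifies uniform equivalence $C^{-1}\omega_t \leq \chi_t \leq C\omega_t$ and an $L^p$-bound on $\omega_t^n/\theta_Y^n$ and then cites Theorem~2.1 of~\cite{GP}). The genuine gap is in how you produce the sub-solution near the exceptional locus.

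The key claim that breaks is that $\tilde\theta_\delta = \pi^*\underline\theta + \delta\omega_{\tilde X}$ is a sub-solution for the perturbed $J$-equation with background $\tilde\omega_\delta = \pi^*\omega + \delta\omega_{\tilde X}$, with the slack $\epsilon$ absorbing the $O(\delta)$ terms. Away from $E$ this is fine, but near $E$ the perturbation is not relatively $O(\delta)$: along any direction normal to $E$ (or more precisely normal to a stratum of $\pi^{-1}(\cS)$), both $\pi^*\underline\theta$ and $\pi^*\omega$ degenerate, so in that direction $\tilde\theta_\delta \approx \delta\omega_{\tilde X} \approx \tilde\omega_\delta$ and the corresponding eigenvalue of $\tilde\omega_\delta$ relative to $\tilde\theta_\delta$ is of size $1$, independent of $\delta$. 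The sub-solution inequality there forces a constraint of the form $(n-1) < n\,\tilde\xi_\delta\,e^{\tilde\psi}$ up to lower-order corrections, which has no reason to hold for a general $J$-slope $\xi$; your slack $\epsilon$ in \eqref{sinswsub} is an $X$-intrinsic quantity and gives no help against a perturbation that is $O(1)$ in some directions. Put differently: $\underline\theta$ being a sub-solution on $X$ is a pointwise constraint on the eigenvalues of $\omega$ with respect to $\underline\theta$, and this constraint is simply not inherited when both forms are replaced by $\pi^*(\cdot)+\delta\omega_{\tilde X}$ along $E$.

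The paper circumvents exactly this obstruction by perturbing with K\"ahler currents that blow up transversally to the singular strata rather than with the smooth form $\delta\omega_{\tilde X}$. It builds $\gamma_{i,j}=\chi+\ddbar\,\epsilon\rho_{i,j}^{1-\delta}$ (cone-type along each stratum $\cS_{i,j}$) and a cone metric $\theta_{con}$ along the full exceptional divisor, then sets
\[
\omega_t=\omega+t\gamma_\varepsilon+s\theta_{con,\varepsilon},\qquad
\chi_t=\underline\chi+At\gamma_\varepsilon+As\theta_{con,\varepsilon}
\]
with a large constant $A$. The transversal blow-up (Lemma~\ref{tranest1}, Lemma~\ref{tranest2}, Corollaries~\ref{tranest3}--\ref{tranest4}) is precisely what supplies the missing room: in the dangerous normal directions the ratio $\chi_t^{n-2}\wedge\omega_t/\chi_t^{n-1}$ is forced to be small because the cone factor in $\chi_t$ dominates $\omega_t$ in those directions. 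The super-solution and sub-solution bounds then come from the term-by-term comparisons in Propositions~\ref{appsupsol} and~\ref{appsubsol}. Without some such transversal blow-up built into the approximating forms, the sub-solution step in your proposal does not go through, and it is the structural heart of the argument rather than a technical refinement.
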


Both (\ref{sinjeqn}) and (\ref{sinswsub}) are defined in the sense of currents. We expect that the solution is unique  and  smooth on $X^\circ$ with $\xi = e^c$.  It will require a uniform second order estimate away from the singular set of $X$. We also believe that the $J$-slope (\ref{jslop}) should be closely related to the minimal slope introduced in \cite{DMS}  for the constant $J$-equation. Finally, we remark that the notion of the sup-slope and sub-solution is fairly flexible and it can be extended to a larger family of globally fully nonlinear equations by modifying the conditions for $f$ (c.f. \cite{Sz}). For example,  the work of  \cite{CJY} could be generalized for the  deformed hermitian Yang-Mills equation with a prescribed angle function, where $f$ is given by
$$f(\lambda) = \sum_{i=1}^n {\rm arctan}(\lambda_i). $$ 
In particular, one can similarly define the sup-slope and a pairing sub-solution in the supercritical case.

%
%
%


\section{The continuity method}

We will begin our proof for Theorem \ref{mainthm1}. The proof can be used for Theorem \ref{mainthm2} in the hermitian case without changes. Our goal is to show 
$$(4) \Rightarrow (1) \Rightarrow (3) \Rightarrow (2) \Rightarrow (4)$$
in Theorem \ref{mainthm1}.
The main step is to establish $(4) \Rightarrow (1)$.  In this section, we will set up the continuity method for the proof of Theorem \ref{mainthm1}.

Let $\overline{u}$ and $\underline{u} \in \cE$ be a pair of super and sub-solutions satisfying
\begin{equation}\label{supsub}
e^{\overline c}=\max_M e^{-\psi} f(\lambda(\theta_{\overline{u}}))  <  \min_{M} e^{-\psi} f_{\infty} (\lambda(\theta_{\overline{u}})) .
\end{equation}
Let $\sigma$ be the sup-slope for equation (\ref{maineqn}). Then  $$\sigma \leq e^{\overline c}$$
by definition. There exists $\delta>0$ such that 
\begin{equation}\label{gap}
e^{\overline c} < (1+\delta) e^{ \overline c} \leq  \min_{M} e^{-\psi} f_{\infty} (\lambda(\theta_{\overline{u}})). 
\end{equation}
By letting $\overline{\psi} = \log F[\overline{u}] $, we have
 $$F[\overline u] = e^{\overline{\psi}} \leq e^{\psi + \overline c},$$
by the choice $\overline c$.   Immediately we have the following lemma.

\begin{lemma}
\begin{equation}\label{barpsi}
  \overline \psi  \leq  \psi + \overline c. 
  \end{equation}
  
\end{lemma}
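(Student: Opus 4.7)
The statement is essentially a bookkeeping consequence of the definitions just introduced in the preceding paragraph, so my plan is to unwind those definitions directly rather than invoke any machinery about $f$, $\Gamma$, or the sub-solution condition.

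First, by the definition $\overline{\psi} = \log F[\overline{u}]$, we have $F[\overline{u}] = e^{\overline{\psi}}$ pointwise on $M$. Second, by the choice
\[
e^{\overline{c}} = \max_M e^{-\psi} F[\overline{u}],
\]
we get the pointwise bound $e^{-\psi} F[\overline{u}] \leq e^{\overline{c}}$ on $M$, which rearranges to $F[\overline{u}] \leq e^{\psi + \overline{c}}$. Combining the two, $e^{\overline{\psi}} \leq e^{\psi + \overline{c}}$, and since the exponential is strictly increasing and both sides are positive, taking logarithms yields
\[
\overline{\psi} \leq \psi + \overline{c}
\]
on $M$, as desired.

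There is no genuine obstacle here; the only subtlety is confirming that $\overline{\psi}$ is well-defined as a smooth function on $M$, which follows from $\overline{u} \in \cE$ together with conditions \eqref{fcon1} and \eqref{fcon3} (so that $f(\lambda(\theta_{\overline{u}})) > 0$ everywhere and the logarithm is legitimate). The lemma's purpose is to record a uniform pointwise comparison between the inhomogeneity $\overline{\psi}$ associated to the super-solution and the target inhomogeneity $\psi$, which will later be used to set up the one-parameter deformation in the continuity method.
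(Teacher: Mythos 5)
Your proof is correct and matches the paper's argument exactly: both unwind $\overline{\psi} = \log F[\overline u]$ and the definition $e^{\overline c} = \max_M e^{-\psi}F[\overline u]$ from \eqref{supsub} to obtain the pointwise bound $e^{\overline\psi} \le e^{\psi + \overline c}$, then take logarithms. The remark on well-definedness of $\overline\psi$ via \eqref{fcon3} is a reasonable extra sanity check but adds nothing beyond what the paper implicitly assumes.
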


We will consider the following continuity method 
\begin{equation}\label{contim}
F[\overline{u} + \phi_t] = e^{\psi_t + c_t}, ~ \psi_t = (1-t)\overline{\psi} + t \psi, 
\end{equation}
for $t\in [0, 1]$, where 
$$\overline u+ \phi_t\in \cE, ~ \mathrm{sup}_M \phi_t =0$$
  and $c_t$ is a constant for each $t\in [0, 1]$. 

Let 
\begin{equation}\label{solvT}
\cT=\{ t\in [0, 1]~|~ (\ref{contim}) ~\textnormal{admits~a~smooth~solution~ at} ~t\}. 
\end{equation}

\begin{lemma} $\cT$ is  non-empty and open.
\end{lemma}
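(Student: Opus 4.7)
The plan is to handle non-emptiness by direct inspection and openness via the implicit function theorem.

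\textbf{Non-emptiness.} At $t=0$ the equation reads $F[\overline u + \phi_0] = e^{\overline \psi + c_0}$. Since $\overline\psi = \log F[\overline u]$ by definition, I would simply take $\phi_0 = 0$ and $c_0 = 0$; this satisfies $\overline u \in \cE$ and $\sup_M \phi_0 = 0$, so $0 \in \cT$.

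\textbf{Openness.} Fix $t_0 \in \cT$ with smooth solution $(\phi_{t_0}, c_{t_0})$ and set $v_0 = \overline u + \phi_{t_0} \in \cE$. The plan is to apply the implicit function theorem to
\[
G(\eta, c, t) = \log F[\overline u + \eta] - \psi_t - c
\]
on suitable H\"older spaces. Since $F$ depends on $\eta$ only through $\nabla^2 \eta$, the equation is invariant under adding a constant to $\eta$, which lets me work on the closed subspace $\tilde C^{2,\alpha}(M) = \{\eta : \int_M \eta \, dv_g = 0\}$ and recover the sup-normalization afterward by subtracting $\sup_M \eta_t$ from the solution.

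The linearization of $G$ in $(\eta, c)$ at the reference solution (renormalized to zero mean) is
\[
D_{(\eta,c)} G(\tilde \eta, \tilde c) = \frac{1}{F[v_0]} L \tilde \eta - \tilde c, \qquad L \tilde\eta := F^{ij}(\lambda(\theta_{v_0})) \nabla_i \nabla_j \tilde \eta,
\]
where $F^{ij}$ is positive definite on $\cE$ by condition (\ref{fcon1}), so $L$ is uniformly elliptic on the closed manifold $M$ with no zeroth-order term. To verify that $D_{(\eta,c)}G$ is an isomorphism from $\tilde C^{2,\alpha}(M) \times \R$ onto $C^{0,\alpha}(M)$, I would invoke: the strong maximum principle to conclude $\ker L = \{\text{constants}\}$; Fredholm theory to deduce $\dim\, \mathrm{coker}\, L = 1$; and a Krein--Rutman/Perron-type argument to produce a strictly positive generator $\phi^* > 0$ of $\ker L^*$. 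Given $g \in C^{0,\alpha}(M)$, the solvability condition $\int_M F(g + \tilde c)\phi^* \, dv_g = 0$ uniquely determines $\tilde c$ (the denominator $\int_M F \phi^* \, dv_g$ is positive since $F, \phi^* > 0$), and the resulting $\tilde\eta$ is uniquely pinned down by the zero-mean constraint.

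The implicit function theorem then produces a $C^{2,\alpha}$ solution $(\eta_t, c_t)$ for $t$ in a neighborhood of $t_0$; standard elliptic bootstrapping upgrades $\eta_t$ to $C^\infty$ using smoothness of $\theta$, $\psi$, and $\overline\psi$. Setting $\phi_t := \eta_t - \sup_M \eta_t$ restores $\sup_M \phi_t = 0$ without changing the PDE, and since $\Gamma$ is open while $\lambda(\theta_{v_0})$ takes values in a compact subset of $\Gamma$, continuity of eigenvalues on $M$ ensures $\overline u + \phi_t \in \cE$ for $t$ close to $t_0$. The main technical point I expect is the cokernel analysis and the positivity of $\phi^*$, which is exactly what makes the adjustment of $\tilde c$ well-defined; everything else is routine once the functional-analytic framework is set up.
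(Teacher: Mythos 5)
Your proof is correct and follows the same inverse-function-theorem strategy the paper uses; the paper simply compresses the entire linearization/Fredholm/Krein--Rutman analysis of the linearized operator into a citation of Delanoe \cite{De1}, together with the observation that $0 \in \cT$ with $\phi_0 = 0$, $c_0 = 0$. The one place your write-up visibly diverges from the paper is the hermitian case: the paper invokes a conformal change to a Gauduchon metric in order to control $\ker L^*$, whereas your Krein--Rutman production of a strictly positive $\phi^* \in \ker L^*$ uses only that the linearized operator $L$ is uniformly elliptic with no zeroth-order term (so $L$ annihilates constants), and this hypothesis holds verbatim for $L = F^{i\bar j}\partial_i\partial_{\bar j}$ on a hermitian manifold, so your argument in fact sidesteps the Gauduchon modification.
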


\begin{proof} The equation (\ref{contim}) can be solved for $t=0$ with $\phi_0 = 0$ and $c_0=0$.  Therefore $\cT$ is non-empty. The openness follows from the inverse function theorem by the work of \cite{De1}. In the hermitian case, the openness follows by a conformal change that turns $g$ to a Gauduchon metric. 
\end{proof}

\begin{lemma} \label{ctbd} There exists $C>0$ such that for all $t\in \cT$, 
$$ - C \leq c_t \leq t \overline c. $$

\end{lemma}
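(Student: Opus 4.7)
The plan is to extract both bounds from the maximum principle applied to $\phi_t$ at its extremal points on the compact manifold $M$, using only the monotonicity condition (\ref{fcon1}) and the already established bound (\ref{barpsi}). At this stage the sub-solution condition and (\ref{gap}) play no role; they will be needed only for the subsequent $C^0,C^1,C^2$ estimates.

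For the upper bound $c_t \leq t\overline c$, I would pick a point $p\in M$ at which $\phi_t$ attains its maximum. At $p$ one has $\nabla^2\phi_t(p) \leq 0$, so $\theta_{\overline u + \phi_t}(p) \leq \theta_{\overline u}(p)$ as symmetric $2$-tensors. Since both $\overline u$ and $\overline u + \phi_t$ lie in $\cE$, the eigenvalues at the endpoints of the straight-line segment joining these two tensors lie in $\Gamma$, and because $\Gamma$ is a convex cone containing $\Gamma_n$ the entire segment does. The monotonicity (\ref{fcon1}) then gives $F[\overline u + \phi_t](p) \leq F[\overline u](p) = e^{\overline\psi(p)}$. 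Reading the continuity equation (\ref{contim}) at $p$ and substituting $\psi_t = (1-t)\overline\psi + t\psi$,
$$
c_t \,\leq\, \overline\psi(p) - \psi_t(p) \,=\, t\bigl(\overline\psi(p) - \psi(p)\bigr) \,\leq\, t\overline c,
$$
where the last step is (\ref{barpsi}).

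The lower bound is obtained by the symmetric argument at a point $q\in M$ where $\phi_t$ attains its minimum. There $\nabla^2\phi_t(q) \geq 0$, hence $\theta_{\overline u + \phi_t}(q) \geq \theta_{\overline u}(q)$, and so $F[\overline u + \phi_t](q) \geq F[\overline u](q) = e^{\overline\psi(q)}$. This yields
$$
c_t \,\geq\, \overline\psi(q) - \psi_t(q) \,=\, t\bigl(\overline\psi(q) - \psi(q)\bigr) \,\geq\, -\max_M|\overline\psi - \psi|,
$$
so one may take $C := \max_M|\overline\psi - \psi|$, a constant independent of $t$.

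There is no serious obstacle. The only point to verify carefully is the monotonicity implication $A \leq B \Rightarrow f(\lambda(A)) \leq f(\lambda(B))$ when $A,B$ are symmetric tensors with eigenvalues in $\Gamma$; this is standard and follows from (\ref{fcon1}) together with the inclusion $\Gamma + \overline\Gamma_n \subset \Gamma$, itself a direct consequence of $\Gamma$ being an open convex cone containing $\Gamma_n$.
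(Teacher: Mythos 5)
Your proof is correct and follows essentially the same route as the paper: apply the maximum principle at the extremal points of $\phi_t$, use the monotonicity (\ref{fcon1}) to compare $F[\overline u + \phi_t]$ with $F[\overline u] = e^{\overline\psi}$, and then read off the bounds on $c_t$ from (\ref{contim}) together with (\ref{barpsi}). The only cosmetic difference is that you spell out the $\Gamma + \overline\Gamma_n \subset \Gamma$ justification for the monotonicity and take $C = \max_M|\overline\psi - \psi|$ instead of the paper's $|\overline\psi|_{C^0} + |\psi|_{C^0}$, both of which are fine.
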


\begin{proof}  
Suppose $t\in \cT$ and $\phi_t$ is a solution of (\ref{contim}). By the maximum principle and condition (\ref{fcon1}), we have at the maximal point $p_t\in M$ of $\phi_t$, 
$$  e^{(1-t) \overline \psi + t \psi + c_t} = F[\overline u +\phi_t ] \leq F[\overline u]  = e^{\overline \psi}. $$
Therefore by (\ref{barpsi}),
%
%
$$c_t \leq t (\overline \psi - \psi) (p_t) \leq t \overline c. $$
At the minimal point of $\phi_t$, we have
$$ e^{(1-t) \overline \psi + t \psi + c_t} = F[\overline u +\phi_t ] \geq F[\overline u]  = e^{\overline \psi}.$$
Then 
$$c_t \geq t \min_M (\overline \psi - \psi) \geq - |\overline \psi|_{C^0(M)} - |\psi|_{C^0(M)}. $$
We have completed the proof of the lemma.
\end{proof}


\section{Sup-slopes and sub-solutions}

We will keep the same notations in Section 3.  We first prove some basic properties of $f_\infty$ and the subsolution operator $F_\infty$.

\begin{lemma}\label{dico} Either $f_\infty\equiv \infty$ in $\Gamma$ or  $f_\infty(\lambda)$ is bounded for each $\lambda\in \Gamma$. 
\end{lemma}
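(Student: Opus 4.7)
My plan is to establish the dichotomy by reducing it to a general fact about concave $[0,\infty]$-valued functions on open convex sets. The main steps will be: (i) show each $f_{\infty,i}$ is well-defined and concave on $\Gamma$, (ii) prove that a concave $[0,\infty]$-valued function on an open convex set is either identically $+\infty$ or everywhere finite, and (iii) combine these using the symmetry of $f$ to conclude for $f_\infty=\min_i f_{\infty,i}$.

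For well-definedness, I would first note that because $\Gamma$ is an open convex cone containing $\Gamma_n$, one has $\Gamma+\overline\Gamma\subset\Gamma$ (since for $\lambda\in\Gamma$ and $v\in\overline\Gamma$, the midpoint $(\lambda+v)/2$ lies in the interior of $\Gamma$, and then the cone property gives $\lambda+v\in\Gamma$). In particular $(\lambda_1,\ldots,R,\ldots,\lambda_n)\in\Gamma$ for every $R$ sufficiently large; combined with the monotonicity (\ref{fcon1}) and positivity (\ref{fcon3}) of $f$, this shows $f_{\infty,i}:\Gamma\to(0,\infty]$ is a well-defined monotone limit. To see concavity of $f_{\infty,1}$, for $\lambda,\mu\in\Gamma$ and $t\in[0,1]$, concavity of $f$ applied at the point $((1-t)R_1+tR_2,(1-t)\lambda_2+t\mu_2,\ldots,(1-t)\lambda_n+t\mu_n)$, together with monotonicity in the first variable, yields
\[
f_{\infty,1}((1-t)\lambda+t\mu) \geq (1-t)\,f_{\infty,1}(\lambda)+t\,f_{\infty,1}(\mu)
\]
after sending $R_1,R_2\to\infty$.

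The key step is (ii): if $g:U\to[0,\infty]$ is concave on an open convex $U\subset\mathbb{R}^m$, then either $g\equiv\infty$ or $g<\infty$ on all of $U$. Indeed, if $g(x_0)=\infty$ for some $x_0\in U$ and $y\in U$ is arbitrary, openness of $U$ gives $\epsilon>0$ with $z:=y+\epsilon(y-x_0)\in U$; writing $y=\tfrac{\epsilon}{1+\epsilon}x_0+\tfrac{1}{1+\epsilon}z$ as a convex combination with both weights strictly positive, concavity forces
\[
g(y)\geq \tfrac{\epsilon}{1+\epsilon}g(x_0)+\tfrac{1}{1+\epsilon}g(z)=+\infty,
\]
using $g(x_0)=\infty$ and $g(z)\geq 0$. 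Applying this sub-claim with $g=f_{\infty,1}$ gives the dichotomy for $f_{\infty,1}$, and by the symmetry of $f$ in its arguments (so $f_{\infty,i}(\lambda)=f_{\infty,1}(\sigma_i\lambda)$ for the transposition $\sigma_i$ of $1$ and $i$) the same dichotomy holds for every $f_{\infty,i}$ simultaneously. Either all $f_{\infty,i}\equiv\infty$ on $\Gamma$, whence $f_\infty\equiv\infty$, or all are finite everywhere, whence $f_\infty(\lambda)=\min_i f_{\infty,i}(\lambda)<\infty$ for every $\lambda\in\Gamma$. The delicate point in the plan is verifying the concavity of $f_{\infty,1}$ as a limit of values of $f$ taken along an unbounded ray; once this is in hand, the abstract concave dichotomy cleanly finishes the argument.
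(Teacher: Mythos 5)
Your proof is correct. It rests on the same central mechanism as the paper's: choose a point a little past the base point along the segment connecting it to the target (the $\epsilon$-slack), and exploit concavity along that segment. The difference is one of packaging. You first pass the concavity inequality for $f$ to the limit $R_1,R_2\to\infty$ to establish that each $f_{\infty,i}$ is itself a concave $[0,\infty]$-valued function on $\Gamma$ (using the monotonicity \eqref{fcon1} to guarantee monotone convergence), and then invoke the clean abstract dichotomy that an extended-real-valued concave function on an open convex set is either $\equiv\infty$ or everywhere finite. The paper instead never forms the limit concave function explicitly: it works with the finite-$R$ restrictions
$h_R(t)=f\bigl((1-t)(\lambda_1,\dots,\lambda_{n-1},R)+t(\lambda_1',\dots,\lambda_{n-1}',R)\bigr)$
on $t\in[-\varepsilon,1]$, derives the quantitative bound $h_R(1)\le(1+\varepsilon^{-1})h_R(0)$ from concavity at finite $R$, and only then lets $R\to\infty$, obtaining $f_{\infty,n}(\lambda')\le(1+\varepsilon^{-1})f_{\infty,n}(\lambda)$ directly. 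Your route is more modular and has the side benefit of already establishing the concavity of $f_\infty$, which the paper proves separately in the next lemma (its Lemma~\ref{dico2}); the paper's route avoids having to reason about concavity of extended-real-valued functions at all and yields an explicit multiplicative bound rather than just finiteness. Both are complete and correct; the $\Gamma+\overline{\Gamma}\subset\Gamma$ observation and the treatment of the $\infty$-valued case ($g(z)\ge 0$ prevents $\infty-\infty$) are handled properly in your write-up.
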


\begin{proof} It suffices to prove the lemma for $f_{\infty, n}$ since $f$ is symmetric. Suppose $f_{\infty, n}(\lambda) < \infty$ for some $\lambda \in \Gamma$. For any $\lambda' \in \Gamma$, we pick sufficiently small $\varepsilon>0$ so that $(1+\varepsilon) \lambda -\varepsilon \lambda'\in \Gamma$. We  then  define  
$$h_R(t) = f( (1-t)(\lambda_1, ...,  \lambda_{n-1}, R) + t (\lambda'_1, ..., \lambda'_{n-1}, R) ).$$
and
$$ h_\infty(t)= \lim_{R\to \infty} h_R(t) $$
for $t\in [-\varepsilon, 1]$ and $R\in [\max(\lambda_n, \lambda'_n), \infty) $. 
Obviosly, $h_R(t)$ is concave on $[-\varepsilon, 1]$, and by concavity, 
$$h_R(1) \leq \varepsilon^{-1} \left( h_R(0)- h_R(-\varepsilon) \right) + h_R(0) \leq  (1+ \varepsilon^{-1} ) h_R(0).$$
Therefore $$f_{\infty, n}(\lambda')=h_\infty(1) \leq  (1+ \varepsilon^{-1} ) h_\infty(0), $$
and the lemma is proved.%
\end{proof}

\begin{lemma}\label{dico2} If $f_\infty \neq \infty$, then it is a symmetric continuous function on $\Gamma$ satisfying the following. 
\begin{eqnarray} 
 && f_\infty=f_\infty(\lambda_1, ..., \lambda_n) ~{\rm is ~ increasing ~ in~} \lambda_i,~ i=1, ..., n,  \label{ffcon1}\\
 && f_\infty ~\textnormal{is ~concave~in}~\Gamma, \label{ffcon2}\\
 && f>0 ~\textnormal{in}~\Gamma,  \label{ffcon3}\\
&& \lim_{R\rightarrow \infty} f_\infty(R\lambda) =\infty, ~{\rm for ~any~} \lambda \in \Gamma. \label{ffcon4}
\end{eqnarray}
 
\end{lemma}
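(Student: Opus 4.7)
\noindent\textbf{Proof proposal for Lemma \ref{dico2}.} The plan is to deduce each of the four listed properties, together with continuity, by transferring the corresponding property of $f$ through the pointwise limit defining $f_{\infty,i}$ and then through the finite minimum $f_\infty=\min_i f_{\infty,i}$. The hypothesis $f_\infty\not\equiv\infty$ combined with Lemma \ref{dico} ensures that every $f_{\infty,i}$ is finite on $\Gamma$, which is the crucial fact that prevents any of the limits from degenerating.

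First I would study each $f_{\infty,i}$ separately. For fixed $\lambda\in\Gamma$ and $i$, the function $R\mapsto f(\lambda_1,\ldots,\lambda_{i-1},R,\lambda_{i+1},\ldots,\lambda_n)$ is non-decreasing in $R$ by (\ref{fcon1}), so $f_{\infty,i}(\lambda)$ is an increasing pointwise limit of functions that are concave and non-decreasing in $(\lambda_1,\ldots,\widehat{\lambda_i},\ldots,\lambda_n)$ by (\ref{fcon2}) and (\ref{fcon1}). An increasing limit of concave functions that stays finite (guaranteed here by Lemma \ref{dico}) is concave, and a pointwise limit of non-decreasing functions is non-decreasing. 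Since $f_{\infty,i}$ is constant in its $i$-th entry, it is in fact concave and weakly non-decreasing in each of the $n$ variables. The symmetry of $f$ also gives $f_{\infty,i}(\lambda_1,\ldots,\lambda_n)=f_{\infty,\sigma(i)}(\lambda_{\sigma(1)},\ldots,\lambda_{\sigma(n)})$ for any permutation $\sigma$.

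Next I would assemble the properties of $f_\infty=\min_i f_{\infty,i}$. Symmetry of $f_\infty$ follows from the symmetry relation above, since any permutation of $\lambda$ just permutes the family $\{f_{\infty,i}\}$ whose minimum is taken. Monotonicity (\ref{ffcon1}) follows because the pointwise minimum of non-decreasing functions is non-decreasing. Concavity (\ref{ffcon2}) follows because the minimum of concave functions is concave. Positivity (\ref{ffcon3}) follows from $f_{\infty,i}(\lambda)\geq f(\lambda)>0$ on $\Gamma$, giving $f_\infty\geq f>0$. The limit condition (\ref{ffcon4}) follows from the same inequality applied along $R\lambda$: $f_\infty(R\lambda)\geq f(R\lambda)\to\infty$ by (\ref{fcon4}). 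Finally, continuity of $f_\infty$ on the open convex set $\Gamma$ is immediate from concavity and finiteness by a standard convex-analysis fact (a finite concave function on an open convex subset of $\mathbb{R}^n$ is locally Lipschitz, in particular continuous).

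The only step that requires any care is the interchange of pointwise limits with concavity in the construction of $f_{\infty,i}$: one must use that the limit is monotone in $R$ and finite, rather than just pointwise, to avoid pathologies. Once Lemma \ref{dico} supplies finiteness, this is routine, and the rest of the proof is a matter of passing each property of $f$ first through the limit (to get properties of $f_{\infty,i}$) and then through the finite minimum (to get properties of $f_\infty$). I do not foresee any substantive obstacle beyond this bookkeeping.
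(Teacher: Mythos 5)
Your argument is correct and follows essentially the same route as the paper: both pass the structural properties of $f$ through the pointwise limit to each $f_{\infty,i}$ and then through the finite minimum, with the paper only spelling out the concavity computation and noting the rest follow directly. The one inessential misstatement is your remark that monotonicity in $R$ is needed to avoid "pathologies" in preserving concavity under the limit — any pointwise limit of concave functions to a finite limit is concave; the monotonicity from (\ref{fcon1}) is only used to guarantee the limit exists.
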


\begin{proof}  Condition (\ref{ffcon1}),  (\ref{ffcon2}), (\ref{ffcon3}) and (\ref{ffcon4}) follow directly from the properties of $f$. For example, to show (\ref{ffcon2}), we first note that $f_{\infty, n}$ is concave because
\begin{eqnarray*}
f_{\infty,n}(\lambda) + f_{\infty, n}(\lambda') &=& \lim_{R\to \infty}\left( f(\lambda_1, ..., \lambda_{n-1}, R)  +  f(\lambda'_1, ..., \lambda'_{n-1}, R) \right) \\
&\leq&2 \lim_{R\to \infty} f\left(\frac{\lambda_1+\lambda_1'}{2} , ..., \frac{\lambda_n+\lambda_n'}{2}, R \right)  \\
&=& f_{\infty, n} \left(\frac{\lambda+\lambda'}{2} \right).
\end{eqnarray*}
By symmetry, $f_{\infty, i}$ is concave for each $i=1, ..., n$. Hence $f_\infty = \min_{i=1, ..., n} f_{\infty, i}$ is again concave. 
\end{proof}

\begin{lemma}  Either $F_\infty[u] \equiv \infty$ on $M$ for each $u\in \cE$ or $F_\infty[u]\in C^0(M)$ for each $u\in \cE$.

\end{lemma}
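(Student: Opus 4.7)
The plan is to reduce the statement to the dichotomy already proved for $f_\infty$ in Lemma \ref{dico} together with the continuity provided by Lemma \ref{dico2}. There are just two cases to check, corresponding to the two alternatives of that dichotomy, so essentially the lemma is a pointwise-to-global lift of what has just been established.

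In the first case, suppose $f_\infty \equiv \infty$ on $\Gamma$. For any $u\in \cE$ and any $p\in M$, the admissibility condition built into the definition of $\cE$ gives $\lambda(\theta_u(p))\in \Gamma$, and therefore $F_\infty[u](p) = f_\infty(\lambda(\theta_u(p))) = \infty$. Hence $F_\infty[u]\equiv \infty$ on $M$ for every $u\in \cE$, as required.

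In the second case, by Lemma \ref{dico} the function $f_\infty$ is finite on all of $\Gamma$, and by Lemma \ref{dico2} it is symmetric and continuous on $\Gamma$. Fix $u\in \cE$ and a point $p_0\in M$; choose a smooth local orthonormal frame for $g$ near $p_0$ so that the matrix of $\theta_u$ in that frame depends smoothly on $p$. The map from symmetric $n\times n$ matrices to the unordered spectrum in $\R^n/S_n$ is continuous, and the symmetric function $f_\infty$ descends to a continuous function on that quotient. Composing, $p\mapsto f_\infty(\lambda(\theta_u(p)))$ is continuous at $p_0$, and since $p_0$ is arbitrary one obtains a well-defined, frame-independent continuous function $F_\infty[u]\in C^0(M)$.

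The only point requiring care is continuity of the spectrum at points where eigenvalues collide, but this is not a real obstacle: the symmetry of $f_\infty$ bypasses any ordering issue, since $f_\infty(\lambda)$ can be regarded as a continuous function of the elementary symmetric polynomials of the eigenvalues, which are themselves polynomial (hence smooth) in the entries of $\theta_u$. With this observation the argument goes through cleanly and the dichotomy is immediate.
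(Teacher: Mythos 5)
Your proof is correct and takes essentially the same approach as the paper: in the unbounded case $F_\infty[u]\equiv\infty$ trivially, and in the bounded case one uses Lemma \ref{dico2} to get continuity of $f_\infty$ on $\Gamma$ and then observes that $F_\infty[u]=f_\infty(\lambda(\theta_u))$ is a composition of continuous maps. You supply some extra justification for the continuity of the eigenvalue map which the paper leaves implicit, but the underlying argument is the same.
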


\begin{proof}  It suffices to prove the case when $f_\infty \neq \infty$ by Lemma \ref{dico}.  $F_\infty[u] = f_\infty(\lambda(\theta_u))$ is composition of continuous functions because  $u\in C^\infty(M)$ and $f_\infty \in C^0(\Gamma)$. The lemma is then proved. 
\end{proof}

The following properties of the sup-slope are obvious from its definition. 

\begin{lemma} For any $\psi \in C^\infty(M)$ and $C\in \mathbb{R}$, the sup-slope satisfies 
$$\sigma(M, g, \theta, f, \psi+C) = e^{-C} \sigma(M, g, \theta, f, \psi)>0.$$
Moreover, for any $\psi_1, \psi_2\in C^\infty(M)$, we have 

$$e^{   - \max_M |\psi_2 -   \psi_2|   } \leq \frac{\sigma(M, g, \theta, f,  \psi_2)}{\sigma(M, g, \theta, f, \psi_1)} \leq e^{ \max_X |\psi_2 - \psi_1| }. $$
\end{lemma}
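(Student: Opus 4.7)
The plan is to prove both assertions directly from the definition
\[
\sigma(M,g,\theta,f,\psi)=\inf_{u\in\cE}\max_M e^{-\psi}F[u],
\]
as the paper indicates these are ``obvious'' consequences of the formula for $\sigma$. No analytic input beyond the positivity of $F[u]$ on $\cE$ and the commutation of $\max$ and $\inf$ with multiplication by positive constants is needed.

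For the scaling identity, the key observation is the pointwise equality $e^{-(\psi+C)}F[u]=e^{-C}\,e^{-\psi}F[u]$ for every $u\in\cE$. Since $e^{-C}$ is a positive constant independent of the base point $p\in M$ and of $u$, it factors through $\max_M$ and then through $\inf_{u\in\cE}$, giving at once
\[
\sigma(M,g,\theta,f,\psi+C)\;=\;e^{-C}\,\sigma(M,g,\theta,f,\psi).
\]
The positivity $\sigma>0$ was already recorded in the discussion immediately after Definition \ref{gslope} under the standing hypothesis $\cE\neq\emptyset$, so the bound $e^{-C}\sigma(M,g,\theta,f,\psi)>0$ is immediate.

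For the two-sided ratio bound, set $\delta:=\max_M|\psi_2-\psi_1|$; then the pointwise sandwich $e^{-\delta}e^{-\psi_1}\leq e^{-\psi_2}\leq e^{\delta}e^{-\psi_1}$ holds on all of $M$. Multiplying through by $F[u]\geq 0$, taking $\max_M$, and then taking $\inf_{u\in\cE}$ preserves each inequality (because at every step the multiplicative factor $e^{\pm\delta}$ is a positive constant, which commutes with $\max$ and $\inf$), yielding
\[
e^{-\delta}\,\sigma(M,g,\theta,f,\psi_1)\;\leq\;\sigma(M,g,\theta,f,\psi_2)\;\leq\;e^{\delta}\,\sigma(M,g,\theta,f,\psi_1).
\]
Dividing by $\sigma(M,g,\theta,f,\psi_1)>0$ gives precisely the stated bound on $\sigma(\psi_2)/\sigma(\psi_1)$.

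There is essentially no obstacle in either part: the entire argument is formal bookkeeping from the definition of $\sigma$. The structural hypotheses \eqref{fcon1}--\eqref{fcon4} and the symmetric cone structure of $\Gamma$ enter only indirectly, to guarantee that $F[u]>0$ for $u\in\cE$ and that $\sigma$ itself is strictly positive; the algebraic manipulation proving the two displayed identities uses neither concavity nor monotonicity of $f$.
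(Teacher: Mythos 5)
Your treatment of the scaling identity and of the two-sided ratio bound is correct, and it is exactly the kind of ``formal bookkeeping from the definition'' the paper has in mind---constants factor through $\max_M$ and through $\inf_{u\in\cE}$, and the pointwise sandwich $e^{-\delta}e^{-\psi_1}\leq e^{-\psi_2}\leq e^{\delta}e^{-\psi_1}$ with $\delta=\max_M|\psi_2-\psi_1|$ propagates through both operations. So far so good.

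The gap is in the positivity claim $\sigma>0$. You write that this ``was already recorded in the discussion immediately after Definition \ref{gslope},'' but that earlier sentence is only an announcement; its proof is this very lemma. In other words, citing it here is circular, and the positivity is precisely the one nontrivial assertion the paper's proof is devoted to. Note that positivity is \emph{not} automatic: for each fixed $u\in\cE$ one certainly has $\max_M e^{-\psi}F[u]>0$ because $\lambda(\theta_u)$ lies in $\Gamma$ and $f>0$ there, but an infimum over $u\in\cE$ of strictly positive quantities could in principle be zero. The paper rules this out by a maximum-principle comparison with a fixed reference $u_0\in\cE$: at a minimum point $p$ of $\phi=u-u_0$ one has $\nabla^2\phi(p)\geq 0$, hence $\theta_u(p)\geq\theta_{u_0}(p)$, and by the monotonicity condition \eqref{fcon1} this gives $e^{-\psi}F[u](p)\geq e^{-\psi}F[u_0](p)\geq\min_M e^{-\psi}F[u_0]$; therefore
\[
\max_M e^{-\psi}F[u]\;\geq\;\min_M e^{-\psi}F[u_0]\;>\;0
\]
uniformly in $u\in\cE$, and taking the infimum over $u$ yields $\sigma\geq\min_M e^{-\psi}F[u_0]>0$. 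Without some argument of this flavor---a uniform positive lower bound independent of the competitor $u$---the claim $\sigma>0$ is unsupported, and with it the division by $\sigma(M,g,\theta,f,\psi_1)$ in your final step.
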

\begin{proof}
We need only to prove that $\sigma(M, g, \theta, f, \psi)>0$. By the assumption that the set  $\mathcal E$ in \eqref{eqn:e definition} is nonempty, we can pick a fixed $u_0\in \mathcal E$. It is clear that $\min_M e^{-\psi} F[u_0]>0$ since $\lambda (\theta_{u_0})\in \Gamma$ at any point of $M$. Then for any $u\in\mathcal E$, we look at the minimum point $p\in M$ of $\phi= u - u_0$. At $p$, we have 
$$e^{-\psi}F[u]\ge e^{-\psi} F[u_0]. $$ Hence $\max_M e^{-\psi}F[u] \ge \min_M e^{-\psi} F[u_0]>0$. Taking infimum over $u\in\mathcal E$ gives the desired inequality. 
\end{proof}

The following proposition is a key ingredient of our proof for Theorem \ref{mainthm1}.

\begin{proposition} \label{subpsi} Let $\delta>0$ be the fixed constant in (\ref{gap}) and $\underline u \in \cE$ be the sub-solution defined in (\ref{supsub}). Then for any $t\in \cT$, we have
\begin{equation}
 \min_{M} \left( e^{-\psi_t } f_{\infty} (\lambda(\theta_{\underline u })) \right) \geq (1+\delta) e^{c_t}, 
\end{equation}

\end{proposition}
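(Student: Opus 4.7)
The plan is to prove the pointwise inequality
$$e^{-\psi_t(x)} f_\infty(\lambda(\theta_{\underline u}(x))) \geq (1+\delta) e^{c_t}$$
for each $x \in M$, by reducing it to a purely scalar inequality in $\overline c$, $c_t$, $\overline\psi(x)$, $\psi(x)$ and then invoking facts already proved in this section.

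First I would isolate $f_\infty(\lambda(\theta_{\underline u}(x)))$ on the left and apply the sub-solution condition (\ref{gap}) (read with $\underline u$ in the $f_\infty$ slot, consistently with Definition \ref{subdef} and the pair in (\ref{supsub})), which provides the pointwise lower bound
$$f_\infty(\lambda(\theta_{\underline u}(x))) \geq (1+\delta)\, e^{\overline c + \psi(x)}.$$
Substituting $\psi_t = (1-t)\overline\psi + t\psi$, the proposition then reduces to the scalar inequality
$$\overline c + \psi(x) \geq c_t + (1-t)\overline\psi(x) + t\psi(x),$$
which rearranges to
$$(1-t)\bigl(\overline\psi(x) - \psi(x)\bigr) \leq \overline c - c_t.$$

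Next I would bound the two sides separately using previously established facts. The pointwise estimate (\ref{barpsi}), $\overline\psi \leq \psi + \overline c$, makes the left-hand side at most $(1-t)\overline c$. The upper bound on $c_t$ from Lemma \ref{ctbd}, $c_t \leq t\overline c$, makes the right-hand side at least $(1-t)\overline c$. Chaining these two estimates closes the inequality and therefore the proposition.

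I do not anticipate a real obstacle: every step is either a rewriting or an invocation of a lemma already established in Section 2. The only minor care is in reading the sub-solution hypothesis (\ref{supsub})--(\ref{gap}) as involving $\theta_{\underline u}$ on the $f_\infty$-side, after which the argument is pure algebraic bookkeeping around the affine interpolation $\psi_t = (1-t)\overline\psi + t\psi$ and the maximum-principle bound $c_t \leq t\overline c$.
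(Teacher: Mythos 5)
Your proposal is correct and takes essentially the same route as the paper: it combines the gap inequality (\ref{gap}) applied to $f_\infty(\lambda(\theta_{\underline u}))$, the bound $\overline\psi \le \psi + \overline c$ from (\ref{barpsi}), and the upper bound $c_t \le t\overline c$ from Lemma \ref{ctbd}. The only presentational difference is that you reduce to the scalar inequality $(1-t)(\overline\psi - \psi) \le \overline c - c_t$ and then bound each side, while the paper chains the same three estimates directly into a string of inequalities on $\min_M e^{-\psi_t - c_t} f_\infty(\lambda(\theta_{\underline u}))$; you also correctly flag the $\overline u$ versus $\underline u$ typo in (\ref{supsub})--(\ref{gap}).
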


\begin{proof} By (\ref{supsub}) and Lemma \ref{ctbd}, we have 
\begin{eqnarray*}
&& \min_{M} \left( e^{-\psi_t -c_t} f_{\infty} (\lambda(\theta_{\underline u })) \right)  \\
&=&\min_{M} \left( e^{-(1-t)\overline \psi - t\psi -c_t} f_{\infty} (\lambda(\theta_{\underline u })) \right) \\
&\geq &\min_{M} \left( e^{- \psi  -(1-t) \overline c - c_t} f_{\infty} (\lambda(\theta_{\underline u })) \right) \\
&\geq & (1+\delta) e^{- c_t -(1-t) \overline c  }   \max_M e^{-\psi} F[\overline u]  \\
&= & (1+\delta) e^{- c_t -(1-t) \overline c + \overline c }     \\
&\geq & (1+\delta)    .
\end{eqnarray*}

\end{proof}

Let us recall the definition of  $\cC$-subsolutions introduced by Szekleyhidi \cite{Sz} (see also \cite{Gu2, PT}).

\begin{definition}  Let $h\in C^\infty(M)$ be a positive function and 
$$\Gamma^{h(p)} = \{ \lambda \in \Gamma~|~ f(\lambda) \geq h(p) \}, ~ p \in M. $$

\begin{enumerate}

\item   $u\in \cE$ is said to be a $\cC_{h} $-subsolution if the set 
$$
\left( \lambda(\theta_u (p))   + \Gamma_n \right) \cap \partial \Gamma^{h( p)} \subset \mathbb{R}^n
$$ is bounded for each $p\in M$.

\medskip

\item  $u\in \cE$ is said to be a $\cC_{h, r, R}$-subsolution for some $r>0$ and $R>0$,  if 
\begin{equation}
\left( \lambda(\theta_u(p)) - r \mathbf{1} + \Gamma_n \right) \cap \partial \Gamma^{h (p)} \subset B(0, R) \subset \mathbb{R}^n
\end{equation}
for all $p\in M$, where $\mathbf{1}=(1,...,1)\in \mathbb{R}^n$. 
\end{enumerate}

\end{definition}

Obviously, a $\cC_{h, r, R}$-subsolution must also be a $\cC_h$-subsolution.  Suppose $\overline{u}$ and $\underline{u}$ are a pair of super and sub-solutions for equation (\ref{maineqn}) chosen in (\ref{supsub}).

\begin{proposition} \label{pinsubsol} There exist $r>0$, $R>0$ such that  $\underline{u}$ is a $\cC_{e^{\psi_t+c_t}, r, R}$-subsolution for (\ref{contim}) for all $t\in \cT$.

\end{proposition}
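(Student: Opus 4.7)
The plan is to extract uniform constants $r, R > 0$ from the strict quantitative gap $f_\infty(\lambda(\theta_{\underline{u}}(p))) \geq (1+\delta) e^{\psi_t(p)+c_t}$ supplied by Proposition \ref{subpsi}, via a small coordinate perturbation of $\Lambda(p) := \lambda(\theta_{\underline{u}}(p))$ followed by a Dini-type uniform convergence argument. Throughout I write $h_t(p) := e^{\psi_t(p) + c_t}$. Since $\underline{u}$ is smooth on the compact manifold $M$, the set $K := \{\Lambda(p) : p \in M\}$ is a compact subset of $\Gamma$, and by Lemma \ref{ctbd} together with smoothness of $\psi$ and $\overline\psi$, there are uniform constants $a, b > 0$ with $a \leq h_t(p) \leq b$ for every $p \in M$ and $t \in \cT$.

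The first step is to fix $r$. Openness of $\Gamma$ gives some $r_0 > 0$ for which $K_r := K - r\mathbf{1}$ stays in a compact subset of $\Gamma$ whenever $r \in (0, r_0]$. If $f_\infty \equiv \infty$ there is nothing to do; otherwise $f_\infty$ is continuous by Lemma \ref{dico2}, so uniform continuity on a neighborhood of $K$ lets me shrink $r$ uniformly to guarantee $f_\infty(\Lambda(p) - r\mathbf{1}) \geq (1 + 3\delta/4)\, h_t(p)$ for all $p, t$. The second step is to fix $R$ by a Dini argument. For each $i$ the family $G_R^{(i)}(\lambda') := f(\lambda'_1, \ldots, \lambda'_{i-1}, R, \lambda'_{i+1}, \ldots, \lambda'_n)$ is continuous on the compact set $K_r$, monotone increasing in $R$ by (\ref{fcon1}), and converges pointwise to $f_{\infty, i}$ as $R \to \infty$. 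Continuity of $f_{\infty, i}$ (Lemma \ref{dico2}) and Dini's theorem produce uniform convergence on $K_r$ in the finite case; a parallel open-cover/compactness argument handles the $f_\infty \equiv \infty$ case. Either way I obtain a uniform $R_0$ such that $G_R^{(i)}(\lambda') \geq (1 + \delta/2)\, h_t(p)$ for all $R \geq R_0$, $\lambda' \in K_r$, every $i$, and all $p, t$.

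To conclude, take any $\mu \in (\Lambda(p) - r\mathbf{1} + \Gamma_n) \cap \partial \Gamma^{h_t(p)}$ and pick $i_0$ with $\mu_{i_0} = \max_i \mu_i$. The auxiliary point obtained from $\Lambda(p) - r\mathbf{1}$ by replacing its $i_0$-th coordinate by $\mu_{i_0}$ still lies in $\Gamma$ (because $\Gamma + \overline{\Gamma_n} \subset \Gamma$), and $\mu_j \geq \Lambda_j(p) - r$ for every $j$, so monotonicity of $f$ gives $f(\mu) \geq G^{(i_0)}_{\mu_{i_0}}(\Lambda(p) - r\mathbf{1})$. If $\mu_{i_0} \geq R_0$ then the second step forces $f(\mu) > h_t(p)$, contradicting $f(\mu) = h_t(p)$ on $\partial \Gamma^{h_t(p)}$; hence $\mu_i \leq R_0$ for every $i$, and the lower bounds $\mu_i > \Lambda_i(p) - r$ together with compactness of $K$ confine the intersection to a fixed Euclidean ball $B(0, R)$ independent of $p$ and $t$. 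The main technical obstacle is securing the uniform-in-$\lambda'$ convergence $G_R^{(i)} \to f_{\infty, i}$ on $K_r$ in the second step; this is exactly the setting of Dini's theorem since (\ref{fcon1}) supplies the monotonicity and Lemma \ref{dico2} the continuous limit.
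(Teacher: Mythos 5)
Your proposal is correct, and it reaches the conclusion by the same overall strategy as the paper --- exploit the quantitative gap of Proposition \ref{subpsi}, compactness of $M$, and the dichotomy of Lemma \ref{dico} for $f_\infty$ --- but it implements the crucial uniformity step in a genuinely different way. The paper fixes $r$ small and then argues by contradiction: assuming no uniform $R$ exists, it extracts sequences $q_k$, $t_k$, $V_k$ with $|V_k|\to\infty$, passes to convergent subsequences, and in each of the two cases ($f_\infty\equiv\infty$ or $f_\infty$ finite) pins the limit of $f(\lambda(\theta_{\underline u}(q_k)) - r\mathbf 1 + V_k)$ against either $\infty$ or $(1+\delta/2)e^{\psi_{t'}(q)+c'}$ to contradict the assumed identity $f = e^{\psi_{t_k}(q_k)+c_{t_k}}$. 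You instead make the uniform convergence $G_R^{(i)}\to f_{\infty,i}$ on the compact set $K_r$ explicit, via Dini's theorem in the finite case (using concavity of $f_{\infty,i}$ from Lemma \ref{dico2} for continuity) and an open-cover argument in the infinite case, and then read off a uniform $R_0$ directly rather than by contradiction. These are essentially the same compactness fact in dual form (sequential vs.\ covering), but your version is more constructive. One small phrasing slip: the displayed claim that $G_R^{(i)}(\lambda')\ge (1+\delta/2)h_t(p)$ for all $\lambda'\in K_r$ \emph{and} all $p,t$ decouples $\lambda'$ from $p$ and is stronger than what the gap actually provides; what you need (and what you in fact use in the final paragraph) is the coupled version with $\lambda'=\Lambda(p)-r\mathbf 1$, and that version does follow from step 1 together with the uniform convergence and the uniform lower bound $h_t(p)\ge a$.
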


\begin{proof}    By Proposition \ref{subpsi}, there exists $\delta>0$ such that for all $t\in \cT$, 
$$\min_{M}\left( e^{-\psi_t - c_t} f_{\infty}(\lambda(\theta_{\underline{u}})) \right) \geq 1+\delta. $$ We will prove by contradiction. Suppose there exist a sequence of $q_k \in M$, $t_k\in \cT$ and $V_k \in \Gamma_n \subset \mathbb{R}^n$ such that 
$$ f(\lambda( \theta_{\underline u}(q_k)) - r\mathbf{1} + V_k) = e^{\psi_{t_k}  +c_{t_k}}, $$ 
$$\lim_{k\rightarrow \infty} V_k=\infty. $$ After passing to a subsequence, we can assume that 
$$q_k \rightarrow q \in M, ~t_k \rightarrow  t' \in [0, 1], c_{t_k} \rightarrow  c'  $$
since $c_t$ is uniformly bounded for all $t\in \cT$. Without loss of generality, we can aslo assume that the $n^{th}$ component of $V_k$ tends to $\infty$.

We will break our argument into the following two cases since $f_{\infty}(\lambda)$ 
is either $\infty$ for all $\lambda\in \Gamma $  or finite for all  $\lambda \in \Gamma$ by Lemma \ref{dico}. 
\begin{enumerate}

\item Suppose $f_{\infty}$ is unbounded.  Then $ e^{-\psi_t(p) - c_t} f_{\infty}(\lambda(\theta_{\underline{u}}(p))) =\infty$ for all $p\in M$ and $t\in \cT$.  Since $M$ is closed and $\underline u$ is smooth, there exists sufficiently small $r>0$ such that  for any $p\in M$. 
$$\lambda(\theta_{\underline{u}}(p)) - 2r \mathbf{1} \in \Gamma.$$
Then 
\begin{eqnarray*}
&&e^{\psi_{ t'} (q) +  c'} \\
&= & \lim_{k\rightarrow \infty} e^{\psi_{t_k}(q_k) + c_{t_k}}\\
&=& \lim_{k\rightarrow \infty} f(\lambda(\theta_{\underline u}( q_k)) - r \mathbf{1} + V_k )  \\
&=& \lim_{k\rightarrow \infty} f(\lambda(\theta_{\underline u}(q ) )- 2r \mathbf{1} + V_k  +  ( r \mathbf{1} + \lambda( \theta_{\underline u}( q_k )- \lambda(\theta_{\underline u}( q)  )  )) \\   
&\geq &\lim_{l \rightarrow \infty} f(\lambda(\theta_{\underline u}( q))- 2r \mathbf{1} + l (0, ..., 0, 1) ) \\   
&=&\infty
\end{eqnarray*}
by (\ref{fcon1}),  as  $r \mathbf{1} + \lambda( \theta_{\underline u}) ( q_k )- \lambda(\theta_{\underline u}( q)  ) \in \Gamma_n$ for sufficiently large $k>>1$. 
Contradiction.

\medskip

\item Suppose $f_{\infty}$ is bounded.
By continuity, there exists sufficiently small $r>0$, such that for any $p\in M$ and $t\in \cT$, 
$$\lambda(\theta_{\underline{u}}(p)) - 2r \mathbf{1} \in \Gamma, $$
$$\min_{p\in M}\left( e^{-\psi_t(p) - c_t} f_{\infty}(\lambda(\theta_{\underline{u}}(p))- 2r \mathbf{1}) \right) > 1+ \frac{\delta}{2}.$$
   Then similar to case (1), we have 
\begin{eqnarray*}
&&e^{\psi_{t'}(q) +  c'} \\
&=&\lim_{k \rightarrow \infty} e^{\psi_{t_k}(q_k)+ c_{t_k}} \\
&=& \lim_{k\rightarrow \infty} f(\lambda(\theta_{\underline u}( q_k)) - r \mathbf{1} + V_k )  \\
&\geq &\lim_{l \rightarrow \infty} f(\lambda(\theta_{\underline u}( q )- 2r \mathbf{1} + l(0,...,0,1) ))\\   
&= &f_{\infty} (\lambda(\theta_{\underline u}(q)- 2r \mathbf{1}))\\
&\geq & (1+ \delta/2) e^{\psi_{t'}(q) + c'}.
\end{eqnarray*}
 Contradiction. 
\end{enumerate}
 The proposition immediately follows.
\end{proof}

 We now prove uniqueness for the solutions of equation (\ref{maineqn}). 

\begin{lemma}\label{uniq1} Suppose  $u  \in \cE$ solves equation (\ref{maineqn}). Then
$$F[u]= \sigma e^\psi,$$
where $\sigma$ is the sup-slope for (\ref{maineqn}). 

\end{lemma}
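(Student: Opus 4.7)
The plan is to show two inequalities: $\sigma \le e^c$ and $\sigma \ge e^c$, where $c$ is the constant satisfying $F[u] = e^{\psi+c}$.

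The upper bound is immediate: since $u \in \cE$ itself is admissible and $e^{-\psi} F[u] = e^c$ is constant, the infimum in the definition of $\sigma$ satisfies $\sigma \le \max_M e^{-\psi} F[u] = e^c$.

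For the reverse inequality, I would argue by a standard maximum principle comparison. Take any $v \in \cE$ and let $w = u - v$; let $p \in M$ be a point where $w$ attains its maximum. Then $\nabla^2 w(p) \le 0$, so as symmetric $2$-tensors at $p$,
\begin{equation*}
\theta_u(p) = \theta(p) + \nabla^2 u(p) \le \theta(p) + \nabla^2 v(p) = \theta_v(p).
\end{equation*}
Ordering the eigenvalues increasingly, this forces $\lambda_i(\theta_u(p)) \le \lambda_i(\theta_v(p))$ for every $i$. Since $f$ is symmetric and monotonically increasing in each argument by (\ref{fcon1}), and both vectors lie in $\Gamma$, this gives $F[u](p) \le F[v](p)$. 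Multiplying by $e^{-\psi(p)}$ and using $e^{-\psi} F[u] \equiv e^c$, we conclude
\begin{equation*}
e^c = e^{-\psi(p)} F[u](p) \le e^{-\psi(p)} F[v](p) \le \max_M e^{-\psi} F[v].
\end{equation*}
Taking the infimum over $v \in \cE$ yields $\sigma \ge e^c$.

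Combining the two bounds gives $\sigma = e^c$, and therefore $F[u] = e^{\psi+c} = \sigma e^\psi$, as claimed. There is no substantial obstacle here; the only point requiring care is the passage from the tensor inequality $\theta_u(p) \le \theta_v(p)$ to the componentwise inequality of ordered eigenvalues, which is a standard fact about symmetric matrices, together with the observation that symmetry plus coordinatewise monotonicity of $f$ is exactly what is needed to conclude $f(\lambda(\theta_u)) \le f(\lambda(\theta_v))$ at $p$.
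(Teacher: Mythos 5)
Your proof is correct and takes essentially the same approach as the paper: the upper bound $\sigma \le e^c$ is immediate from admissibility of $u$, and the lower bound comes from a maximum-principle comparison at a critical point of $u-v$ together with the monotonicity of $f$ (the paper phrases this as a contradiction argument by comparing at the minimum of $u'-u$, which is the same point as the maximum of $u-u'$). The only cosmetic difference is that you argue directly instead of by contradiction and spell out the eigenvalue-ordering step that the paper leaves implicit.
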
 

\begin{proof}  By definition of $u$ and $\sigma$,  
$$\sigma \leq \max_M e^{-\psi} F[u] =e^{-\psi} F[u].$$ Suppose $\sigma < e^{-\psi} F[u_1].$ Then there exists $u'\in \cE$ such that 
$$ \sigma \leq \max_M e^{-\psi} F[u'] < e^{-\psi} F[u].$$
Applying the maximum principle at the minimum point of $\phi=u'-u$, we have
$$ e^{-\psi} F[u] \leq e^{-\psi} F[u'] \leq \max_M e^{-\psi} F[u'] < e^{-\psi} F[u].$$
Contradiction. Therefore $e^{-\psi} F[u]=\sigma$. 
\end{proof}

\begin{corollary}
Suppose   equation (\ref{maineqn}) admits a solution in $\cE$. Then
$$\sup_{u\in \cE} \min_M e^{-\psi} F[u] = \inf_{u\in \cE} \max_M e^{-\psi} F[u].$$

\end{corollary}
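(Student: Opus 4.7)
The plan is to deduce the corollary directly from Lemma \ref{uniq1} combined with a single application of the maximum principle, essentially reusing the argument already employed in that lemma.

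Let $u_0 \in \cE$ be the assumed solution of (\ref{maineqn}). By Lemma \ref{uniq1} we have $e^{-\psi} F[u_0] = \sigma$ pointwise on $M$, so in particular
\[
\min_M e^{-\psi} F[u_0] = \max_M e^{-\psi} F[u_0] = \sigma.
\]
Feeding $u_0$ into both sides gives $\sup_{u\in\cE}\min_M e^{-\psi} F[u] \geq \sigma$, while $\inf_{u\in\cE}\max_M e^{-\psi} F[u] = \sigma$ is just the definition of the sup-slope. So it remains to show the upper bound $\sup_{u\in\cE}\min_M e^{-\psi} F[u] \leq \sigma$, i.e., for every $u\in\cE$ there is a point where $e^{-\psi}F[u]\leq \sigma$.

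For arbitrary $u\in\cE$, I would pick a maximum point $p\in M$ of $\phi=u-u_0$. At $p$ one has $\nabla^2 \phi(p)\leq 0$, hence $\theta_u(p)\leq \theta_{u_0}(p)$ as symmetric $2$-tensors. By the Courant--Fischer min-max characterization of eigenvalues, the ordered eigenvalues satisfy $\lambda_i(\theta_u(p))\leq \lambda_i(\theta_{u_0}(p))$ for every $i$. Since $f$ is symmetric and monotone in each argument by (\ref{fcon1}),
\[
F[u](p)= f(\lambda(\theta_u(p)))\leq f(\lambda(\theta_{u_0}(p)))= F[u_0](p)= \sigma e^{\psi(p)}.
\]
Dividing by $e^{\psi(p)}$ shows $e^{-\psi(p)}F[u](p)\leq \sigma$, and therefore
\[
\min_M e^{-\psi} F[u] \leq e^{-\psi(p)} F[u](p) \leq \sigma.
\]
Taking the supremum over $u\in\cE$ gives $\sup_{u\in\cE}\min_M e^{-\psi} F[u]\leq \sigma$, which together with the reverse inequality above yields the desired equality with $\inf_{u\in\cE}\max_M e^{-\psi} F[u] = \sigma$.

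No step appears to be a real obstacle: the argument is structurally identical to Lemma \ref{uniq1}, only applied at the maximum rather than the minimum of $\phi$. The only point worth stating carefully is the eigenvalue monotonicity step, which relies on the standard fact that for symmetric tensors $A\leq B$ one has $\lambda_i(A)\leq \lambda_i(B)$ for all $i$ once eigenvalues are arranged in nondecreasing order; combined with symmetry and monotonicity of $f$ in each variable, this passes the inequality from tensors to values of $F$.
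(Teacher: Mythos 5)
Your proof is correct and follows essentially the same route as the paper: both rest on Lemma \ref{uniq1} to identify $e^{-\psi}F$ of the solution with the constant $\sigma$, and both then apply the maximum principle at a maximum of the difference $u - u_0$ to conclude $\min_M e^{-\psi}F[u]\leq\sigma$ for every $u\in\cE$. The paper packages this as a contradiction ("suppose there exists $w$ with $\min_M e^{-\psi}F[w]>e^{-\psi}F[v]$..."), while you state the inequality directly and unpack the maximum-principle step (via $\theta_u\leq\theta_{u_0}$ at the maximum point, Weyl/Courant--Fischer monotonicity of ordered eigenvalues, and monotonicity of $f$); the content is identical.
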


\begin{proof} Let $v\in\cE$ be the solution to (\ref{maineqn}). By Lemma \ref{uniq1},  
$$e^{-\psi} F[v]=\inf_{u\in \cE} \max_M e^{-\psi} F[u] \leq \sup_{u\in \cE} \min_M e^{-\psi} F[u].$$
Suppose $e^{-\psi} F[v] < \sup_{u\in \cE} \min_M e^{-\psi} F[u] $. Then  there exists $w\in \cE$ such that 
$$ e^{-\psi} F[v]<  \min_M e^{-\psi} F[w].$$ Applying the maximum principle to the maximal point of $\phi= w- v$, we have
$$e^{-\psi} F[v]<   \min_M e^{-\psi} F[w] \leq e^{-\psi} F[w] \leq e^{-\psi} F[v].$$
Contradiction.
\end{proof}

\begin{lemma} \label{uniqq} Suppose $u_1, u_2\in \cE$ are solutions of equation (\ref{maineqn}). Then 
$$u_2- u_1 = constant. $$

\end{lemma}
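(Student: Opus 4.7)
My plan is to combine Lemma~\ref{uniq1} with a linearization argument and the strong maximum principle. By Lemma~\ref{uniq1}, both solutions satisfy $F[u_1] = F[u_2] = \sigma e^\psi$ pointwise on $M$, so the defining equation reads $F[u_2] - F[u_1] = 0$. This is the key reduction that allows us to bypass the fact that $F$ is fully nonlinear.

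Next I would set $\phi = u_2 - u_1$ and linearize along the segment $u_s = (1-s)u_1 + s u_2$, $s \in [0,1]$. Because $\Gamma$ is a convex symmetric cone, the associated set of symmetric $2$-tensors with eigenvalues lying in $\Gamma$ is convex, so each $\theta_{u_s} = (1-s)\theta_{u_1} + s\theta_{u_2}$ is admissible. Writing $F$ as a function $\tilde F(M) = f(\lambda(M))$ on admissible symmetric matrices and noting that $\theta_{u_s}$ depends affinely on $s$, the fundamental theorem of calculus gives
\begin{equation*}
0 = F[u_2] - F[u_1] = \int_0^1 \frac{d}{ds} \tilde F(\theta_{u_s})\, ds = a^{ij}(x)\, \nabla^2_{ij}\phi,
\end{equation*}
where
\begin{equation*}
a^{ij}(x) = \int_0^1 \frac{\partial \tilde F}{\partial M_{ij}}(\theta_{u_s}(x))\, ds.
\end{equation*}
By condition (\ref{fcon1}) together with the symmetry of $f$, each matrix $\frac{\partial \tilde F}{\partial M_{ij}}(\theta_{u_s})$ is positive definite (this is the standard fact that for a symmetric monotone function on eigenvalues, the induced operator has positive definite derivative in the matrix entries). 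Integrating preserves positive definiteness, so $a^{ij}$ is uniformly positive definite on $M$.

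Thus $\phi$ satisfies a linear, homogeneous, uniformly elliptic second-order equation $a^{ij}\phi_{ij} = 0$ with no zeroth-order term. The strong maximum principle on the closed manifold $M$ then forces $\phi$ to be constant.

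The main technical obstacle is ensuring admissibility along the interpolation path and uniform ellipticity of $a^{ij}$. Both follow cleanly from the convexity of $\Gamma$ (giving convexity of the admissible set of tensors) together with (\ref{fcon1}), so there is no real geometric subtlety. The argument is identical in the Hermitian setting of Theorem~\ref{mainthm2}: one interpolates between $\theta_{u_1}$ and $\theta_{u_2}$, uses the convexity of the cone of $(1,1)$-forms whose eigenvalues with respect to $\omega$ lie in $\Gamma$, and applies the strong maximum principle (with no first-order torsion terms appearing since the linearization of $\ddbar u$ is pure second-order).
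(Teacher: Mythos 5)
Your proof is correct and follows essentially the same route as the paper: interpolate linearly between $u_1$ and $u_2$, write $0 = F[u_2]-F[u_1]$ as $a^{ij}\nabla^2_{ij}\phi$ with $a^{ij}$ the averaged linearization (positive definite by (\ref{fcon1}) and convexity of $\Gamma$), and invoke the strong maximum principle. Your explicit appeal to Lemma~\ref{uniq1} to justify $F[u_1]=F[u_2]$ is a small clarification the paper leaves implicit, but the argument is otherwise the same.
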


\begin{proof} Let $v_t = (1-t) u_1 + t u_2$. Then in local coordinates, we have
$$0=F[u_2]- F[u_1]=\int_0^1 \frac{d}{dt} F[v_t] dt=\left(\int_0^1 F_{ij}[v_t] dt \right)  (u_2 - u_1)_{i j}. $$
 By the concavity and monotonicity of $f$, $L = \left(\int_0^1 F_{ij}[v_t] dt \right)\frac{\partial^2}{\partial x_i \partial x_j} $ is a second order elliptic differential operator on $M$. $L (u_2 - u_1)=0$ implies that $u_2-u_1$ must be a constant by the strong maximum principle. 
\end{proof}


\section{Proof of Theorem \ref{mainthm1} and Theorem \ref{mainthm2}}

In this section, we will prove the main theorems of the paper. It suffices to prove $\cT$ defined in (\ref{solvT}) is closed, for the continuity method (\ref{contim}).

\begin{lemma} \label{consolvv} For any $k>0$, there exists $C_k>0$ such that for any $t\in \cT$ in (\ref{solvT}), 
the solution $\phi_t$ to equation (\ref{contim}) satisfies
 \begin{equation}
 \|\phi_t\|_{C^{k}(X)}\leq C_k.
 \end{equation}
\end{lemma}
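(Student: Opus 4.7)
The plan is to derive the uniform $C^k$ estimates by invoking the a priori estimate machinery of Szekelyhidi \cite{Sz} for $\mathcal{C}$-subsolutions, leveraging the uniform subsolution parameters secured in Proposition \ref{pinsubsol} and the uniform bound on $c_t$ from Lemma \ref{ctbd}. The data along the continuity path \eqref{contim} are uniformly controlled: $|c_t|\leq C$, and $\psi_t=(1-t)\overline\psi+t\psi$ varies in a fixed $C^\infty$-bounded family, so $e^{\psi_t+c_t}$ is uniformly $C^\infty$-bounded away from both $0$ and $\infty$. Rewriting \eqref{contim} in the equivalent form $F[\underline u+\widetilde\phi_t]=e^{\psi_t+c_t}$ with $\widetilde\phi_t=\overline u-\underline u+\phi_t$, the fixed sub-solution $\underline u$ plays the role required by Szekelyhidi's framework, with the subsolution parameters $(r,R)$ of Proposition \ref{pinsubsol} independent of $t\in\cT$.

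The estimates proceed in the standard three steps. First, for the $L^\infty$ bound: since $\sup_M\phi_t=0$, it suffices to bound $\phi_t$ from below, which follows either from the Alexandrov-Bakelman-Pucci argument of \cite{Sz} or from the more recent PDE approach of Guo-Phong-Tong \cite{GPT}, both of which use only the uniform $\mathcal{C}$-subsolution property. Second, for the Hessian estimate, the maximum principle applied to a test function involving the largest eigenvalue of $\theta_{\overline u+\phi_t}$ together with $|\nabla\phi_t|^2$ and a linear combination of $\underline u$, $\phi_t$ yields
\[
\sup_M|\nabla^2\phi_t|_g\leq C\bigl(1+\sup_M|\nabla\phi_t|_g^2\bigr),
\]
with $C$ depending only on the background geometry, the $C^0$ bound from the first step, and the uniform subsolution parameters. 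Third, a blow-up argument combined with a Liouville-type theorem from \cite{Sz} upgrades this to a uniform gradient bound $\sup_M|\nabla\phi_t|_g\leq C$, and hence $\|\phi_t\|_{C^2(M)}\leq C$.

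Once a uniform $C^2$ bound is in hand, the eigenvalue vectors $\lambda(\theta_{\overline u+\phi_t})$ lie in a fixed compact subset of $\Gamma$ on which $f$ is smooth, strictly monotone in each $\lambda_i$, and concave; consequently the linearization of $F$ at the solution is uniformly elliptic. The Evans-Krylov theorem then produces a uniform $C^{2,\alpha}$ estimate, and standard Schauder theory applied to the equations satisfied by successive derivatives of $\phi_t$ bootstraps to $\|\phi_t\|_{C^k(M)}\leq C_k$ for every $k$. The hermitian case needed for Theorem \ref{mainthm2} introduces no new difficulty, as Szekelyhidi's estimates were stated in that generality with torsion terms absorbed into lower-order constants.

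The main obstacle is to ensure that every constant in the three steps is \emph{uniform} in $t\in\cT$. This is precisely the purpose of the strict inequality in the super/sub-solution pair \eqref{supsub} and the resulting gap $\delta>0$ in \eqref{gap}: Proposition \ref{pinsubsol} exploits this gap to produce $\mathcal{C}$-subsolution parameters $(r,R)$ independent of $t$, and this single uniform property cascades through each of the $L^\infty$, Hessian, and gradient estimates. Without the quantitative strict inequality, the $\mathcal{C}$-subsolution set could degenerate as $t$ varies and the estimates would fail to be uniform.
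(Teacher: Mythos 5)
Your proposal is correct and takes essentially the same route as the paper: the paper's proof consists of exactly the same two observations you lead with (Proposition \ref{pinsubsol} gives a uniform $\cC_{e^{\psi_t+c_t},r,R}$-subsolution with $r,R$ independent of $t$, and $\psi_t$, $c_t$ are uniformly bounded), after which it simply cites the a priori estimates of \cite{Sz}. Your paragraphs two and three merely unpack what ``immediately follows from the a priori estimates in \cite{Sz}'' means in practice ($L^\infty$, Hessian, blow-up for the gradient, Evans--Krylov, Schauder bootstrap), which is a faithful and accurate expansion rather than a different argument.
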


\begin{proof} By Proposition \ref{pinsubsol}, $\underline u$ is a $\cC_{e^{\psi_t+c_t}, r, R}$-subsolution for some fixed $0<r<R$ with $\psi_t$ and $c_t$ uniformly bounded for all $t\in \cT$. The lemma immediately follows from the a priori estimates in \cite{Sz}.
\end{proof}

The closedness of $\cT$ immediately follows and we are finally able to solve equation (\ref{maineqn}). 

\begin{corollary}  \label{consolv} There exists a unique $u \in \cE$ solving the equation 
$$F[u]=  e^{\psi+c}.$$

\end{corollary}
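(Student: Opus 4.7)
\emph{Plan.} The continuity set $\cT$ has already been shown to be nonempty and open in $[0,1]$, so it suffices to establish closedness; once that is done, $\cT = [0,1]$ and the desired solution is obtained at $t = 1$.

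For closedness I would take a sequence $t_i \in \cT$ with $t_i \to t_\infty \in [0,1]$ and the corresponding solutions $\phi_{t_i}$ of (\ref{contim}). Lemma \ref{consolvv} supplies uniform $C^k$ bounds on $\{\phi_{t_i}\}$ for every $k$, and Lemma \ref{ctbd} bounds $\{c_{t_i}\}$. An Arzel\`a--Ascoli plus diagonal extraction then yields a subsequence along which $\phi_{t_i} \to \phi_\infty$ smoothly on $M$ and $c_{t_i} \to c_\infty \in \mathbb{R}$. Passing to the limit in (\ref{contim}) gives
\begin{equation*}
F[\overline{u} + \phi_\infty] = e^{\psi_{t_\infty} + c_\infty},
\end{equation*}
with right-hand side strictly positive, so property (\ref{fcon3}) -- namely $f = 0$ on $\partial \Gamma$ -- forces $\lambda(\theta_{\overline{u} + \phi_\infty})$ into the open cone $\Gamma$. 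Hence $\overline{u} + \phi_\infty \in \cE$, the normalization $\sup_M \phi_\infty = 0$ survives the smooth limit, and $t_\infty \in \cT$. Combined with nonemptiness and openness this gives $\cT = [0,1]$, and setting $u := \overline{u} + \phi_1$, $c := c_1$ produces the desired solution of $F[u] = e^{\psi + c}$ in $\cE$.

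Uniqueness up to an additive constant is exactly Lemma \ref{uniqq}: the difference of two solutions is annihilated by the linearization $L = \bigl(\int_0^1 F_{ij}[v_t]\,dt\bigr)\partial_i\partial_j$, which is elliptic by the monotonicity and concavity of $f$, and so must be constant by the strong maximum principle. The genuine difficulty has been absorbed upstream into Lemma \ref{consolvv}; the real obstacle is Proposition \ref{pinsubsol}, which shows that the paired sub-solution $\underline u$ remains a $\cC_{e^{\psi_t + c_t}, r, R}$-subsolution with \emph{uniform} constants $r, R$ along the entire continuity path, thereby letting Szekelyhidi's a priori estimates apply uniformly in $t \in \cT$. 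Given that uniformity, the present corollary is a routine continuity argument, and the conceptual weight of Theorem \ref{mainthm1} lies in the fact that the sup-slope pairing in (\ref{supsub}) is precisely what forces this uniformity to hold along the deformation.
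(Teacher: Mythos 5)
Your argument is correct and matches the paper's route exactly: the paper also deduces closedness of $\cT$ directly from the uniform $C^k$ estimates of Lemma \ref{consolvv} (resting on Proposition \ref{pinsubsol} for the uniform $\cC$-subsolution constants), takes the limit at $t=1$, and invokes Lemma \ref{uniqq} for uniqueness. The only difference is that you spell out the Arzel\`a--Ascoli extraction and the use of (\ref{fcon3}) to keep the limit in $\cE$, steps the paper compresses into a single sentence.
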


We have now proved that $(4) \Rightarrow (1)$ in Theorem \ref{mainthm1}.  

\begin{lemma} \label{3to2} $(1) \Rightarrow (3).$

\end{lemma}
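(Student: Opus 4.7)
The plan is to verify (3) by taking $u \in \cE$ to be the very solution provided by (1). Since $F[u] = e^{\psi+c}$ for some constant $c$, we have $e^{-\psi}F[u] \equiv e^c$, so that $\max_M e^{-\psi}F[u] = e^c$. It therefore suffices to show $\min_M e^{-\psi} F_\infty[u] > e^c$.

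The heart of the matter is the pointwise strict inequality $f_\infty(\lambda) > f(\lambda)$ for every $\lambda \in \Gamma$. I would derive this from strict monotonicity (\ref{fcon1}): for any index $i$ and any fixed $R_0 > \lambda_i$, strict monotonicity gives $f(\lambda_1, \ldots, R_0, \ldots, \lambda_n) > f(\lambda)$, and letting $R \to \infty$ in the definition of $f_{\infty, i}$ yields $f_{\infty, i}(\lambda) \geq f(\lambda_1, \ldots, R_0, \ldots, \lambda_n) > f(\lambda)$. Since $f_\infty = \min_i f_{\infty, i}$ is a minimum of finitely many quantities each strictly exceeding $f(\lambda)$, we conclude $f_\infty(\lambda) > f(\lambda)$. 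Evaluating at $\lambda(\theta_u(p))$ for each $p \in M$, this gives the pointwise strict inequality $e^{-\psi}F_\infty[u] > e^{-\psi}F[u] = e^c$ on $M$.

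The remaining step is to promote this pointwise inequality to a strict minimum gap, which is where the dichotomy of Lemma \ref{dico} enters. If $f_\infty \equiv \infty$ on $\Gamma$, then $F_\infty[u] \equiv \infty$ and there is nothing more to check. Otherwise, by Lemma \ref{dico2} and the unnamed continuity lemma for $F_\infty$ that precedes Proposition \ref{subpsi}, the function $e^{-\psi}F_\infty[u]$ is continuous on the compact manifold $M$, so its minimum is attained; being pointwise strictly greater than the constant $e^c$, the attained minimum is also strictly greater than $e^c$. This establishes (3).

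There is no substantial obstacle for this implication: it is essentially a tautological consequence of strict monotonicity of $f$ together with compactness of $M$, and requires no PDE input. The nontrivial directions of Theorem \ref{mainthm1}, namely $(4) \Rightarrow (1)$ via the continuity method and the subsolution machinery, are handled in the preceding sections; this lemma only serves as a bookkeeping step in the cycle of implications.
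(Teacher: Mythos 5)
Your proof is correct and reaches the same conclusion, but it routes the ``uniform gap'' step differently from the paper. Both arguments begin identically: take $u$ from (1), observe $e^{-\psi}F[u]\equiv e^c$, and derive the pointwise strict inequality $f_\infty(\lambda)>f(\lambda)$ on $\Gamma$ from strict monotonicity (\ref{fcon1}), hence $e^{-\psi}F_\infty[u]>e^c$ at every point. The divergence is in how one upgrades this to a strict minimum gap on $M$. You invoke the unnamed lemma preceding Proposition~\ref{subpsi} (itself resting on Lemma~\ref{dico2}, where concavity of $f_\infty$ gives continuity) to conclude that $e^{-\psi}F_\infty[u]$ is either $\equiv\infty$ or continuous on the compact $M$, and then the minimum is attained and therefore strictly above $e^c$. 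The paper instead argues by contradiction with a minimizing sequence $q_k\to q$: it fixes a finite truncation level $K$ and a margin $\epsilon>0$ at the limit point $q$ using only the continuity of $f$ itself (not of $f_\infty$), then uses monotonicity to push the bound $e^{-\psi}f_{\infty,i}\ge e^{-\psi}f(\ldots,K,\ldots)>e^c+\epsilon$ to nearby $q_k$, contradicting minimality. In effect the paper reproves inline the lower semicontinuity of $f_\infty$ that it needs, whereas you delegate it to the already-established continuity lemma. Your route is shorter and more modular; the paper's is self-contained at this spot and would survive even if one were cautious about the continuity claim for $f_\infty$. Both are correct.
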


\begin{proof} Let $u$ be the solution of equation (\ref{maineqn}).  Since $u$ is smooth,  
$$e^{-\psi} f_{\infty}(\lambda(\theta_u )) > e^{-\psi} f(\lambda(\theta_u))=e^c$$
on $M$ for a fixed $c\in \mathbb{R}$ by (\ref{fcon1}). The lemma automatically holds if $\min_{M} e^{-\psi} f_{\infty}(\lambda(\theta_u ) )=\infty$. Otherwise, suppose there exist a sequence of points $q_k$ such that 
\begin{equation}\label{1to3}
\lim_{k\rightarrow \infty}  e^{-\psi(q_k)} f_{\infty}(\lambda(\theta_u ))  (q_k) = e^c. 
\end{equation}
without loss of generality, we can assume $q_k \rightarrow q$. There exist $\epsilon>0$ and  $K>>1$ such that  for each $1\leq i \leq n$,  
$$e^{-\psi(q)} f(\lambda_1(q), ..., \lambda_{i-1}(q), K, \lambda_{i+1}(q), ... \lambda_n(q) )> e^c + 2\epsilon,$$
where $\lambda_1, ..., \lambda_n$ are the eigenvalues of $\theta_u$. Then for all sufficiently large $k$, 
$$e^{-\psi(q_k)} f(\lambda_1(q_k), ..., \lambda_{i-1}(q_k), K, \lambda_{i+1}(q_k), ... \lambda_n(q_k)) > e^c + \epsilon.$$
by continuity of $f$ and $\psi$. We immediately have %
$$e^{-\psi(q_k)} f_{\infty}(\lambda(\theta_u ))  (q_k) > e^c+\epsilon $$
by (\ref{fcon1}) for all sufficiently large $k$.
This contradicts (\ref{1to3}). 
\end{proof}

 The choice $u$ in (3) is automatically a sub-solution associated with the sup-slope and so $(3) \Rightarrow (2)$. If we choose $\underline u$ in (2) to be the sub-solution associated to the sup-slope in (3), there must exist a super solution $\overline u$ so that (4) holds.  Hence we have completed the proof of Theorem \ref{mainthm1} by showing 
$$(4) \Rightarrow (1) \Rightarrow (3) \Rightarrow (2) \Rightarrow (4).$$ 
Combining the uniqueness result of Lemma \ref{uniq1},  we have now completed the proof of Theorem \ref{mainthm1}. Theorem \ref{mainthm2} is  proved by the same argument.



\section{The non-constant $J$-equation on singular varieties}

We will prove Theorem \ref{mainthm4} in this section. Let $X$ be an $n$-dimensional projective normal variety with $\psi\in C^\infty(X)$ and two K\"ahler classes $\alpha$ and $\beta$. Under the assumptions of Theorem \ref{mainthm4}, there exist a pair of   super-solution and subsolution  $ \chi, \underline \chi \in \cK(\alpha)$ for the singular $J$-equation  (\ref{sinjeqn})
$$
\theta^{n-1} \wedge \omega = e^{\psi + c} \theta^n 
$$
on $X$, i.e., there exists $\overline\epsilon>0$ such that 
\begin{equation}\label{subjs}
(n-1)e^{-\psi} \underline\chi^{n-2} \wedge \omega < n(\overline \xi  - 2\overline\epsilon) \underline \chi^{n-1}, 
\end{equation}
where
\begin{equation}\label{subjs0}
\overline \xi = \inf_{X^\circ} \left( e^{-\psi} \frac{ \chi^{n-1}\wedge \omega}{  \chi^n}\right). 
\end{equation}
In particular, the subsolution condition (\ref{subjs}) also implies that   for any $1\leq m \leq n-1$,
\begin{equation}\label{subjs2}
m e^{-\psi} \underline\chi^{m-1} \wedge \omega < n(\overline \xi  - 2\overline \epsilon) \underline \chi^{m}. 
\end{equation}

We will fix a very ample line bundle $L$  over $X$ . Then the linear system of $L$ induces a projective embedding
$$\Phi: X \rightarrow \mathbb{CP}^N$$
with $\Phi^* \cO_{\mathbb{CP}^N}(1)= L. $  Let $\cS$ be the singular set of $X$ and let
\begin{equation}\label{resol}
\pi: Y \rightarrow X
\end{equation} 
be  a log resolution of singularity
such that the exceptional locus $\Xi$ of $\pi$ coincides with $\pi^{-1}(\cS)$.

The singular set $\cS$ can be decomposed into a stratification of finitely many smooth components. More precisely, we write $\cS$ as a disjoint union
\begin{equation}
\cS = \sqcup_{i=0}^{n-2} \left( \sqcup_{j=1}^{J(i)} \cS_{i, j} \right),~\dim_{\mathbb{C}} \cS_{i, j} = i
\end{equation}
such that each strata $\cS_{i, j}$ is connected and smooth. In particular, $\cS_{i, j}$ is either projective or quasi-projective  in $\mathbb{CP}^N$.

 For each $\cS_{i, j}$, let $\cJ_{i, j}$ be the ideal sheaf of $\overline{\cS_{i, j}} \subset \mathbb{CP}^N$.  Then for some sufficiently large $m\in \mathbb{Z}^+$, $L^{\otimes m} \otimes \cJ_{i, j}$ is globally generated. Therefore there exist $\sigma_1, ..., \sigma_{N_m}$ such that all of them vanish along $\cS_{i, j}$ and at any point $p \in \cS_{i, j}$, $\cS_{i, j}$ locally can be defined in $\mathbb{CP}^N$ by $N+1-i$ sections of $\{ \sigma_1, ..., \sigma_{N_m}\}$. In particular, we can choose local holomorphic coordinates $z=(z_1, ..., z_{N+1})$ near $p$ in $\mathbb{CP}^N$ such that $p=0$ and these $N+1-i$ sections  are given by $\{z_{i+1}=0\}$, $\{z_{i+2}=0\}$, ..., $\{z_{N+1}=0\}$. We choose $h$ to be a smooth hermitian metric on $L^{\otimes m}$. We define
\begin{equation}
\rho_{i,j} = \sum_{k=1}^{N_m} |\sigma_k|^2_h .
\end{equation}

\begin{lemma} \label{interior} For any $\cS_{i, j}$, we have 
\begin{equation}
\rho_{i, j} |_{ \overline{\cS_{i,j}}}  =0, ~ \rho_{i, j} |_{ \mathbb{CP}^N \setminus \overline{\cS_{i,j}}}  >0, ~ \left( \partial \rho_{i, j} \right) |_{ \overline \cS_{i,j}}  =0. 
\end{equation}
Furthermore,  for any $p\in \cS_{i, j}$ and local smooth vector fields $V^T$, $V^\perp$ near $p$ in $\mathbb{CP}^N$ with $0\neq V^T|_p \in T_p\cS_{i, j}$ and $0\neq V^\perp |_p \in  N_p\cS_{i, j}$, we have at $p$ 
\begin{equation}
\ddbar \rho_{i, j} (V^T, \overline{ V^T}) =0, ~\ddbar \rho_{i, j} (V^\perp, \overline{ V^\perp} ) >0.
\end{equation}

\end{lemma}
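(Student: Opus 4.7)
The plan is to verify each of the listed properties of $\rho_{i,j} = \sum_{k=1}^{N_m} |\sigma_k|_h^2$ directly, by working in a local holomorphic trivialization of $L^{\otimes m}$ and exploiting both the fact that the $\sigma_k$ lie in the ideal sheaf $\cJ_{i, j}$ and the local structure at $p$ described just before the lemma. The first vanishing claim is immediate since every $\sigma_k$ is a section of $L^{\otimes m}\otimes \cJ_{i,j}$. The positivity on $\mathbb{CP}^N\setminus \overline{\cS_{i,j}}$ uses that global generation of $L^{\otimes m}\otimes \cJ_{i,j}$ by $\sigma_1,\dots,\sigma_{N_m}$ forces the common zero locus of these sections to equal the support of $\cO_{\mathbb{CP}^N}/\cJ_{i,j}$, which is $\overline{\cS_{i,j}}$.

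For the first-order and Levi-form assertions, I would pick local holomorphic coordinates $z=(z_1,\dots,z_{N+1})$ centered at $p\in \cS_{i,j}$ together with a holomorphic frame $e$ of $L^{\otimes m}$ with $|e|_h^2 = e^{-\varphi}$, and write $\sigma_k = f_k\cdot e$ where each $f_k$ is holomorphic and vanishes on $\overline{\cS_{i,j}}$ near $p$. Then
\[
\rho_{i,j}(z) = e^{-\varphi(z)} \sum_{k} |f_k(z)|^2.
\]
A Leibniz expansion of $(e^{-\varphi}|f_k|^2)_{a\overline{b}}$ at $p$, combined with the identity $f_k(p)=0$, collapses all mixed terms and leaves only
\[
\ddbar \rho_{i,j}(V,\overline{V})\bigl|_p = e^{-\varphi(p)} \sum_{k} |V(f_k)(p)|^2,
\]
while the same vanishing shows that $(\partial \rho_{i,j})(p)=0$, which is the third claim.

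The two Levi-form statements then drop out of this formula. If $V^T|_p\in T_p\cS_{i,j}$, then since each $f_k$ restricts to zero on $\cS_{i,j}$, differentiation along an extension of $V^T$ tangent to the stratum gives $V^T(f_k)(p)=0$ for every $k$, so $\ddbar\rho_{i,j}(V^T,\overline{V^T})|_p=0$. For a nonzero normal vector $V^\perp|_p\in N_p\cS_{i,j}$, I would invoke the local description recalled just before the lemma: among the $\sigma_k$ there are $N+1-i$ sections whose zero loci near $p$ are $\{z_{i+1}=0\},\dots,\{z_{N+1}=0\}$, so after rescaling by nonvanishing holomorphic units their local representatives agree with the coordinate functions $z_{i+1},\dots,z_{N+1}$. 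Their differentials at $p$ then span the conormal space to $\cS_{i,j}$, so at least one pairing $V^\perp(f_{k_l})(p)$ is nonzero and the sum is strictly positive.

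The main potential obstacle is the coordinate bookkeeping in the last step: translating the generation hypothesis on $L^{\otimes m}\otimes\cJ_{i,j}$ into the concrete statement that $N+1-i$ of the $f_k$ have linearly independent differentials transverse to $\cS_{i,j}$ at $p$. This ultimately reduces to smoothness of the stratum $\cS_{i,j}$ together with the standard fact that at a smooth point of $\cS_{i,j}$ the ideal $\cJ_{i,j}$ is locally generated by $N+1-i$ holomorphic functions with linearly independent differentials; once that reduction is made, every remaining step is a direct computation from the definition of $\rho_{i,j}$.
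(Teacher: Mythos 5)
Your proof is correct and is the natural one; the paper in fact states Lemma \ref{interior} without a written proof, so the argument you give fills the gap rather than paralleling a proof given there. The key steps are all right: writing $\rho_{i,j}=e^{-\varphi}\sum_k|f_k|^2$ in a local frame, using $f_k|_{\overline{\cS_{i,j}}}=0$ to kill all terms in $\partial\rho_{i,j}$ and in the cross-terms of $\partial\dbar\rho_{i,j}$ along $\overline{\cS_{i,j}}$, arriving at
$\ddbar\rho_{i,j}(V,\overline V)|_p = e^{-\varphi(p)}\sum_k |V(f_k)(p)|^2$
up to a universal positive constant, and then reading off the Levi-form statements from the fact that the holomorphic differentials $df_k|_p$ annihilate $T_p\cS_{i,j}$ while (by the global generation of $L^{\otimes m}\otimes\cJ_{i,j}$ and smoothness of the stratum $\cS_{i,j}$) a subfamily of them spans the conormal space $N^*_p\cS_{i,j}$. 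The positivity of $\rho_{i,j}$ off $\overline{\cS_{i,j}}$ from global generation is likewise correct. One minor point worth flagging, though it does not affect your argument: the paper's phrasing ``coordinates $z=(z_1,\dots,z_{N+1})$ near $p$ in $\mathbb{CP}^N$'' and the count ``$N+1-i$ sections'' appear to be off by one (an $N$-dimensional manifold has $N$ local coordinates, and a smooth $i$-dimensional stratum has codimension $N-i$); your proof only uses that the local defining sections have differentials spanning the conormal at $p$, so it is insensitive to this bookkeeping.
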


We  define
$$\phi_{i, j}   =  \epsilon  (\rho_{i,j})^{1-\delta}, ~\gamma_{i, j}   = \chi +\ddbar \phi_{i, j}$$
for   $\epsilon' >0$ and $0<\delta<1$ to be determined. 
\begin{lemma} For any $\delta\in (0, 1)$, there exists $\epsilon_0>0$ such that for any $(i, j)$ and $0<\epsilon< \epsilon_0$,
$\gamma_{i, j}$ is a K\"ahler current on $X$, i.e., it is bounded below by a proportion of $\omega$. In particular, $\gamma_{i, j}$ is smooth on $X^\circ$.
\end{lemma}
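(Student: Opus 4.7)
The plan is to compute $\ddbar\phi_{i,j}$ pointwise on $\{\rho_{i,j}>0\}$, bound the potentially singular negative terms by the positive part of $\ddbar\rho_{i,j}$ via a Cauchy--Schwarz estimate, and thereby obtain a uniform bound $\ddbar(\rho_{i,j}^{1-\delta})\geq -C_\delta\,\omega$; the strict positivity of the smooth K\"ahler metric $\chi$ will then absorb this perturbation once $\epsilon$ is sufficiently small. Writing $\rho=\rho_{i,j}$, a direct calculation on $\{\rho>0\}$ gives
\[
\ddbar(\rho^{1-\delta}) = (1-\delta)\,\rho^{-\delta}\,\ddbar\rho \;-\; \delta(1-\delta)\,\rho^{-1-\delta}\,\sqrt{-1}\,\partial\rho\wedge\dbar\rho,
\]
in which the first term is reasonable and the second, nonpositive, term is singular along $\{\rho=0\}$.

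The key input is the Poincar\'e--Lelong identity $\ddbar|\sigma_k|^2_h = \sqrt{-1}\langle\nabla^{1,0}\sigma_k,\nabla^{1,0}\sigma_k\rangle_h - |\sigma_k|^2_h\,\Theta(h)$ applied to each section, where $\nabla^{1,0}$ is the Chern connection of $h$ and $\Theta(h)$ its curvature. Summing yields $\ddbar\rho = P - \rho\,\Theta(h)$ with $P:=\sum_k\sqrt{-1}\langle\nabla^{1,0}\sigma_k,\nabla^{1,0}\sigma_k\rangle_h\geq 0$ and $|\Theta(h)|\leq C\omega$ on $X$. Applying the pointwise Cauchy--Schwarz to the section-vector $\sigma=(\sigma_1,\ldots,\sigma_{N_m})$ gives, as $(1,1)$-forms,
\[
\sqrt{-1}\,\partial\rho\wedge\dbar\rho \;\leq\; \rho\,P \;=\; \rho\,\ddbar\rho \;+\; \rho^2\,\Theta(h).
\]
Substituting, the $\rho^{-\delta}\ddbar\rho$ contributions combine into
\[
\ddbar(\rho^{1-\delta}) \;\geq\; (1-\delta)^2\,\rho^{-\delta}\,\ddbar\rho \;-\; \delta(1-\delta)\,\rho^{1-\delta}\,\Theta(h) \;\geq\; -\,C_\delta\,\rho^{1-\delta}\,\omega \;\geq\; -\,C_\delta'\,\omega,
\]
where the next-to-last step uses the bound $\ddbar\rho\geq -C\rho\,\omega$ and the final step uses that $\rho$ is bounded on the compact set $X$; the constant $C_\delta'$ depends only on $\delta$, $\|\rho\|_{L^\infty(X)}$, and the curvature of $h$, independently of $(i,j)$.

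Since $\chi$ is a smooth K\"ahler metric on $X$, one has $\chi\geq c_0\omega$ for some $c_0>0$, and taking $\epsilon_0 := c_0/(2C_\delta')$, uniform over the finitely many indices $(i,j)$, yields $\gamma_{i,j}\geq (c_0/2)\,\omega$ on $\{\rho>0\}\cap X^\circ$ for every $0<\epsilon<\epsilon_0$. Smoothness of $\gamma_{i,j}$ on the whole regular locus $X^\circ$ is immediate from Lemma \ref{interior}, since $\overline{\cS_{i,j}}\subset \cS$ forces $\rho_{i,j}>0$ throughout $X^\circ$. The main remaining obstacle is upgrading this smooth pointwise inequality on the dense open set $X^\circ$ to a genuine current inequality on the singular space $X$; for this I would exploit that $\phi_{i,j}=\epsilon\,\rho^{1-\delta}$ is bounded and continuous on $X$ (as $1-\delta\in(0,1)$), pull back through the log resolution $\pi\colon Y\to X$ from \eqref{resol}, and invoke the standard extension theorem for closed positive $(1,1)$-currents with bounded quasi-psh potentials across analytic subsets of positive codimension, propagating the inequality across $\pi^{-1}(\cS)$ to yield the asserted K\"ahler current property on $X$.
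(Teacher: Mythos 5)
The paper states this lemma without proof, so there is no argument in the source to compare against; your write-up supplies a complete and correct one. The pointwise computation is exactly right: the Poincar\'e--Lelong identity $\ddbar\rho=P-\rho\,\Theta(h)$ with $P\ge 0$, the pointwise Cauchy--Schwarz $\sqrt{-1}\,\partial\rho\wedge\dbar\rho\le \rho\,P$ applied to the tuple $(\sigma_1,\dots,\sigma_{N_m})$, and the cancellation of the $\rho^{-\delta}\ddbar\rho$ terms yield $\ddbar(\rho^{1-\delta})\ge -(1-\delta)\,C\,\rho^{1-\delta}\,\omega$ on $\{\rho>0\}$, which after taking the maximum over the finitely many indices $(i,j)$ and using compactness gives a uniform $C_\delta'$. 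The absorption by $\chi\ge c_0\omega$ then pins down $\epsilon_0$. This is precisely the statement that $\rho^{1-\delta}$ has conical (order $1-\delta$) quasi-psh singularities transverse to $\overline{\cS_{i,j}}$, consistent with the paper's remark immediately after the lemma.

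Two small observations on the final extension step. First, since $\rho_{i,j}$, $h$, and the sections $\sigma_k$ all live on the ambient $\mathbb{CP}^N$, your inequality $\ddbar(\rho^{1-\delta})\ge -C_\delta'\,\omega_{FS}$ actually holds on all of $\mathbb{CP}^N\setminus\overline{\cS_{i,j}}$; since $\rho^{1-\delta}$ is globally continuous and bounded, the Grauert--Remmert/Harvey extension theorem for (quasi-)psh functions across an analytic set of positive codimension immediately upgrades it to all of $\mathbb{CP}^N$, and then one simply restricts to $X$. This avoids passing through the log resolution $\pi\colon Y\to X$ at this stage. Second, an even more economical route is to run the identical computation with the perturbation $\phi_{i,j,\varepsilon}=\epsilon(\rho_{i,j}+\varepsilon)^{1-\delta}$ already introduced in the paper: the same Cauchy--Schwarz estimate $\sqrt{-1}\,\partial\rho\wedge\dbar\rho\le(\rho+\varepsilon)P$ gives a uniform lower bound $\ddbar\phi_{i,j,\varepsilon}\ge -\epsilon\,C_\delta'\,\omega$ that is smooth on all of $X$ and independent of $\varepsilon$, and letting $\varepsilon\to 0$ yields the current inequality with no appeal to extension theorems. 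Either way, your argument is sound, and the in-particular statement (smoothness of $\gamma_{i,j}$ on $X^\circ$) follows exactly as you say from $\{\rho_{i,j}=0\}=\overline{\cS_{i,j}}\subset\cS$.
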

 In particular,  $\gamma_{i, j}$ can locally be viewed as the restriction of a singular K\"ahler metric with cone singularities along $\cS_{i, j}$ in $\mathbb{CP}^N$.  %
 We can now perturb $\gamma_{i, j}$ and $\phi_{i, j}$ by 
$$\phi_{i, j, \varepsilon}=  \epsilon  (\rho_{i,j} + \varepsilon)^{1-\delta}, ~ \gamma_{i, j, \varepsilon} = \chi +\ddbar \phi_{i, j, \varepsilon} .$$

The exceptional locus $\Xi_{i, j}=\pi^{-1}(\cS_{i,j})$ can also be decomposed into a finite stratification
$$\Xi_{i, j} = \sqcup_{k=i}^{n-1} \left( \sqcup_{l=1}^{L(i,j,k)} \Xi_{i, j, k, l} \right),~\dim_\mathbb{C}\Xi_{i, j, k,l}=k $$
such that each $\Xi_{i, j, k, l}$ is smooth and the restricted morphism of (\ref{resol})
$$\pi|_{\Xi_{i, j, k, l}}: \Xi_{i, j, k, l} \rightarrow \cS_{i, j}\subset X \subset \mathbb{CP}^N$$
is surjective and has constant rank $k-i$.

For any point $p\in \Xi_{i, j, k, l} \subset Y$, one can find a local chart $U \subset Y$ of $p$ such that %
$$p=0\in \mathbb{C}^n, ~\Xi_{i, j, k, l} =\mathbb{C}^k\times \{0\} \cap U \subset \mathbb{C}^n. $$
 The map $\pi|_{\Xi_{i, j, k, l}}$ in $U$ is the restriction of the projection from $\mathbb{C}^k=\mathbb{C}^i \times \mathbb{C}^{k-i}$ to $\mathbb{C}^i$ $(k\geq i)$. We then let 
 $$\cF_{i, j, k, l, p}= \mathbb{C}^i\times \{0\} \cap U.$$ In particular, $\pi|_{\cF_{i, j, k, l, p}}:  \rightarrow \cS_{i, j}$ is non-degenerate and so locally biholomorphic.

For convenience, we identify $\omega$, $\chi$, $\underline{\chi}$, $\gamma_{i, j}$ with $\pi^*\omega$, $\pi^*\chi$, $\pi^*\underline{\chi}$, $\pi^* \gamma_{i, j}$. 

\begin{lemma} \label{tranest1} For any point $p \in \Xi_{i, j, k, l}$ and locally smooth vector fields $V^T$, $V^\perp$ on $Y$ near $p$ with $0\neq V^T|_p \in T_p \cF_{i, j, k, l, p}$ and $0\neq V^\perp |_p \in N_p \Xi_{i, j, k, l, p}$,  we have 

\begin{equation}\label{tang1}
\gamma_{i, j} (V^T, \overline{V^T}) >0
\end{equation}
near $p$ and 
\begin{equation}\label{perp1}
\lim_{q\rightarrow p } \frac{ \gamma_{i, j} (V^\perp, \overline{V^\perp})(q) }{ \omega  (V^\perp, \overline{V^\perp}) (q)}= \infty.
\end{equation}

\end{lemma}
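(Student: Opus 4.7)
My plan is to derive both inequalities from a single pointwise Cauchy--Schwarz-type lower bound on $\ddbar\phi_{i,j}$, and then distinguish \eqref{tang1} and \eqref{perp1} by the behavior of the pullback $\pi^*\rho_{i,j}$ along $\Xi_{i,j,k,l}$.

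First I would establish the key estimate. From $\phi_{i,j}=\epsilon\rho_{i,j}^{1-\delta}$, direct differentiation yields
\begin{equation*}
\ddbar\phi_{i,j}=\epsilon(1-\delta)\rho_{i,j}^{-\delta-1}\bigl(\rho_{i,j}\,\ddbar\rho_{i,j}-\delta\,\partial\rho_{i,j}\wedge\dbar\rho_{i,j}\bigr).
\end{equation*}
Because $\rho_{i,j}=\sum_k|\sigma_k|_h^2$ is a sum of squared norms of holomorphic sections of $L^{\otimes m}$, a Cauchy--Schwarz argument in a local trivialization gives
\begin{equation*}
|\partial\rho_{i,j}(V)|^2\;\leq\; C\,\rho_{i,j}\,\ddbar\rho_{i,j}(V,\overline{V})
\end{equation*}
on a neighborhood of $\cS_{i,j}$, with $C>0$ absorbing the curvature contribution of $h$. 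Taking $\delta\in(0,1/C)$ (which we may arrange together with the K\"ahler current lemma), we deduce the pullback estimate
\begin{equation*}
\ddbar\phi_{i,j}(V,\overline{V})\;\geq\;c_\delta\,\rho_{i,j}^{-\delta}\,\ddbar\rho_{i,j}(V,\overline{V})\;\geq\;0
\end{equation*}
at every point of $Y\setminus\Xi_{i,j}$ and every tangent vector $V$ on $Y$, for some $c_\delta>0$.

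For \eqref{tang1}, the restriction $\pi|_{\cF_{i,j,k,l,p}}$ is a biholomorphism onto its image in $\cS_{i,j}$, so $\pi_*V^T|_p$ is a nonzero tangent vector to $\cS_{i,j}$ at $\pi(p)$. Since $\chi$ is a smooth K\"ahler form on $X$, $\chi(V^T,\overline{V^T})(p)>0$, and by continuity $\chi(V^T,\overline{V^T})>0$ on a neighborhood of $p$. Combined with the nonnegativity of $\ddbar\phi_{i,j}$ on $Y\setminus\Xi_{i,j}$,
\begin{equation*}
\gamma_{i,j}(V^T,\overline{V^T})\;=\;\chi(V^T,\overline{V^T})+\ddbar\phi_{i,j}(V^T,\overline{V^T})\;\geq\;\chi(V^T,\overline{V^T})\;>\;0
\end{equation*}
on a punctured neighborhood of $p$ in $Y\setminus\Xi_{i,j}$. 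Positivity on $\cF\subset\Xi_{i,j,k,l}$ itself follows from Lemma \ref{interior}: since $\pi^*\phi_{i,j}|_\cF\equiv 0$ with vanishing tangential Hessian, the restriction $\gamma_{i,j}|_\cF$ coincides with $\chi|_\cF$, which is a smooth K\"ahler form on $\cF$.

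For \eqref{perp1}, fix $V^\perp\in N_p\Xi_{i,j,k,l}$ and let $q\to p$ through $Y\setminus\Xi_{i,j}$. Under the identification $\omega=\pi^*\omega$, the above estimate gives
\begin{equation*}
\frac{\gamma_{i,j}(V^\perp,\overline{V^\perp})(q)}{\omega(V^\perp,\overline{V^\perp})(q)}\;\geq\;c_\delta\,\rho_{i,j}^{-\delta}(q)\cdot\frac{\ddbar\rho_{i,j}(V^\perp,\overline{V^\perp})(q)}{\omega(V^\perp,\overline{V^\perp})(q)}.
\end{equation*}
Since $\pi^*\rho_{i,j}$ vanishes on $\Xi_{i,j}\supset\Xi_{i,j,k,l}$, we have $\rho_{i,j}^{-\delta}(q)\to\infty$ as $q\to p$. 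The main obstacle is to show that the Hessian-to-metric ratio on the right stays uniformly bounded below by a positive constant (or at worst decays strictly slower than any positive power of $\rho_{i,j}$). In the local coordinates $(w_1,\dots,w_n)$ on $Y$ with $\Xi_{i,j,k,l}=\{w_{k+1}=\cdots=w_n=0\}$ and compatible coordinates on $\mathbb{CP}^N$ in which $\rho_{i,j}(z)\asymp\sum_{\alpha>i}|z_\alpha|^2$ by Lemma \ref{interior}, both $\pi^*\ddbar\rho_{i,j}$ and $\pi^*\omega$ degenerate in the $V^\perp$-direction through the same Jacobian factor of $\pi$, so a direct coordinate computation yields comparable rates and the required uniform lower bound. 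The divergent factor $\rho_{i,j}^{-\delta}$ then forces the ratio in \eqref{perp1} to diverge, completing the proof.
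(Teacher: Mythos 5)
Your proof takes a genuinely different route from the paper. The paper black-boxes \eqref{tang1} via the K\"ahler-current property of $\gamma_{i,j}$ and proves \eqref{perp1} by noting that $\hat V := (\Phi\circ\pi)_*V^\perp$ is nonzero and not tangent to $\cS_{i,j}$ at $\pi(p)$, then invoking the cone-singularity structure of $\gamma_{i,j}$ transversal to $\cS_{i,j}$. You instead compute $\ddbar\phi_{i,j}$ explicitly and try to drive everything through a pointwise Cauchy--Schwarz lower bound. This is a legitimate and more explicit alternative, and it clarifies where the $\rho_{i,j}^{-\delta}$ blow-up comes from; but there are two gaps.

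First, the Cauchy--Schwarz step is not correct as stated. Writing $\rho=|\sigma|_h^2$ with $D$ the Chern connection and $\Theta_h$ the curvature, the standard identity gives $\ddbar\rho(V,\overline V)=|D_V\sigma|_h^2-\rho\,\Theta_h(V,\overline V)$ and $\partial\rho(V)=\langle D_V\sigma,\sigma\rangle_h$, whence the sharp estimate is
\[
|\partial\rho(V)|^2\ \le\ \rho\,|D_V\sigma|_h^2\ =\ \rho\,\ddbar\rho(V,\overline V)+\rho^2\,\Theta_h(V,\overline V),
\]
with an \emph{additive} curvature correction. This cannot be absorbed into a multiplicative constant $C$ in $|\partial\rho(V)|^2\le C\rho\,\ddbar\rho(V,\overline V)$: that inequality is false whenever $\ddbar\rho(V,\overline V)$ is small and $\Theta_h(V,\overline V)>0$, e.g.\ in directions tangent to $\cS_{i,j}$ near $\cS_{i,j}$, where $\ddbar\rho$ vanishes to second order by Lemma \ref{interior}. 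Consequently the claimed conclusion $\ddbar\phi_{i,j}(V,\overline V)\ge c_\delta\rho^{-\delta}\ddbar\rho(V,\overline V)\ge 0$ is also false; $\ddbar\phi_{i,j}$ is \emph{not} nonnegative. What is true, after plugging in the corrected bound and using $\Theta_h\le C_h\omega$, is
\[
\ddbar\phi_{i,j}(V,\overline V)\ \ge\ \epsilon(1-\delta)^2\rho^{-\delta}\ddbar\rho(V,\overline V)\ -\ C\epsilon\,\rho^{1-\delta}\omega(V,\overline V),
\]
where the error is $O(\epsilon)\omega$ and is absorbed by $\chi$ for small $\epsilon$; that is precisely the proof that $\gamma_{i,j}$ is a K\"ahler current. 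So you should replace the (false) nonnegativity of $\ddbar\phi_{i,j}$ with the two-sided statement $\gamma_{i,j}\ge c\,\omega+c'_\delta\rho^{-\delta}\ddbar\rho$; with that substitution, your arguments for both \eqref{tang1} and \eqref{perp1} go through.

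Second, in \eqref{perp1} you correctly identify the crux --- controlling the ratio $\ddbar\rho_{i,j}(d\pi V^\perp,\overline{d\pi V^\perp})/\omega(d\pi V^\perp,\overline{d\pi V^\perp})$ --- but then wave at a coordinate computation. The precise input needed is that $d\pi\,V^\perp|_p$ is nonzero and \emph{not tangent} to $\cS_{i,j}$ at $\pi(p)$; if it became tangent, $\ddbar\rho_{i,j}$ would vanish on it while $\omega$ would not, and the ratio could collapse. To be fair, the paper asserts this transversality without proof as well, so you are at parity here, but it deserves to be flagged as the geometric content rather than buried under ``same Jacobian factor.'' Your remark that the lower bound need only ``decay strictly slower than any positive power of $\rho_{i,j}$'' is also slightly off: what is needed is decay strictly slower than $\rho_{i,j}^{\delta}$, since only then does the prefactor $\rho_{i,j}^{-\delta}$ force divergence.
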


\begin{proof} We omit the indices $i, j, k, l$ for convenience. Let $\cF_p^\perp =\left( \{0\}\times \mathbb{C}^{n-i} \right)\cap U$ that is orthogonal to $\cF_p= \mathbb{C}^i \times \{0\}$ with $p=0\in \mathbb{C}^n$. Since $\gamma_{i, j}$ is a K\"ahler current on $X$, (\ref{tang1}) immediately follows. 

 Let $\hat V$ be a local smooth vector field near $\pi(p)$  defined by $\hat V = (\Phi\circ\pi)_* V^\perp \in T_{\pi(q)} X\subset T_{\Phi(\pi(q))} \mathbb{CP}^{N}$ for  $q\in \cF_p^\perp$.   $\hat V|_{\pi(p)} \neq 0$  and it does not lie in $T_{\pi(p)} \cS_{i, j}$.  Since $\gamma_{i, j}$ has cone singularities in each transversal direction of $\cS_{i,j}$ and $\omega$ is smooth, we have 
$$\lim_{q \rightarrow p} \frac{\gamma_{i, j}( \hat V, \overline{  \hat V})(\pi(q))} {\omega( \hat V^\perp, \overline{\hat V^\perp})(\pi(q))} = \infty$$
and (\ref{perp1}) follows.
\end{proof}

We now choose a singular K\"ahler metric $\theta_{con}=\theta(\delta)$ on $Y$ with conical singularities of normal crossings along the exceptional locus of $\pi$ with angle $2(1-\delta)\pi$. More precisely, let $E=\sum_{i=1}^I E_i$ be an effective $\mathbb{Q}$-divisor whose support coincides with the exceptional locus $\Xi$ of $\pi$. Let $\sigma_{E_i}$ be the defining section of $E_i$. We can choose  a smooth hermitian metric $h_{E_i}$ (suitably scaled) of the line bundle associated to $E_i$ such that   
\begin{equation}\label{helo}
 \theta_{con} = \theta_Y + \sum_{i=1}^I \ddbar |\sigma_{E_i}|^{2(1-\delta)}_{h_{E_i}}, ~ \theta_{con,\varepsilon} = \theta_Y + \sum_{i=1}^I \ddbar (|\sigma_{E_i}|^2_{h_{E_i}}+\varepsilon)^{(1-\delta)}
 \end{equation}
for sufficiently small $\varepsilon>0$.

\begin{lemma}\label{tranest2} For any point $p\in \Xi_{i, j, k, l}$ any $K>1$ and any $t>0$, there exists a neighborhood $U \subset Y$ of $p$ such that for any $0 \leq r \leq n$, 

\begin{equation}\label{ktform}
 t\chi^{r-1} \wedge \gamma_{i, j} \wedge \theta_Y ^{n-r}  + \chi^r \wedge \theta_Y^{n-r-1}\wedge  \theta_{con}  \geq K \chi^r \wedge \theta_Y^{n-r}
 \end{equation}

\end{lemma}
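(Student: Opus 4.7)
The plan is to pick local coordinates near $p$ adapted both to $\Xi_{i,j,k,l}$ and to the simple-normal-crossings exceptional divisor $\Xi$, reduce \eqref{ktform} to a pointwise scalar inequality by simultaneously diagonalizing $\chi$ and $\theta_Y$, and verify that inequality by a finite case analysis exploiting the singular blowup of $\gamma_{i,j}$ and $\theta_{con}$ encoded in Lemma~\ref{tranest1} and \eqref{helo}.

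Concretely, I would choose holomorphic coordinates $(z_1,\dots,z_n)$ centered at $p$ such that $\Xi_{i,j,k,l}=\{z_{k+1}=\cdots=z_n=0\}$ and each component of $\Xi$ passing through $p$ is a coordinate hyperplane $\{z_m=0\}$ with $m\in\{k+1,\dots,n\}$; this is possible because $\Xi$ has simple normal crossings and $\Xi_{i,j,k,l}$ is an open part of the intersection of the $n-k$ components of $\Xi$ meeting it. Shrinking $U$ makes $\theta_Y$ comparable to $\sqrt{-1}\sum_a dz^a\wedge d\bar z^a$. At each $q\in U$ I would diagonalize $\chi(q)$ against $\theta_Y(q)$, obtaining a $\theta_Y$-orthonormal frame $\{e_a(q)\}$ and eigenvalues $\alpha_a(q)\ge 0$. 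Since $\chi=\pi^*\chi_0$, $\ker\chi(q)=\ker d\pi_q$, so at $p$ the zero-eigenspace is $H_2':=\mathrm{span}(\partial_{z_{i+1}},\dots,\partial_{z_k})$ and the positive-eigenvalue eigenvectors span $H_1'\oplus H_3'$ with $H_1':=\mathrm{span}(\partial_{z_1},\dots,\partial_{z_i})$ and $H_3':=\mathrm{span}(\partial_{z_{k+1}},\dots,\partial_{z_n})$. Standard multilinear algebra (only diagonal entries of extra $(1,1)$-forms contribute when $\chi,\theta_Y$ are simultaneously diagonal) yields
$$\chi^{r-1}\wedge\gamma_{i,j}\wedge\theta_Y^{n-r}=c\sum_{\substack{|S|=r-1\\ b\notin S}}\Big(\prod_{a\in S}\alpha_a\Big)\gamma_{i,j}(e_b,\bar e_b)\,\theta_Y^n,$$
and analogous expressions for $\chi^r\wedge\theta_Y^{n-r-1}\wedge\theta_{con}$ and $\chi^r\wedge\theta_Y^{n-r}$, reducing \eqref{ktform} to a scalar inequality to be checked for each $T\subseteq\{1,\dots,n\}$ with $|T|=r$ and $\prod_{a\in T}\alpha_a>0$.

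Fix such $T$, and let $N$ denote the set of indices $a$ for which $e_a(q)$ has nontrivial $H_3'$-component. Since the positive-eigenvalue subspace of $\chi(p)$ surjects onto $H_3'$, one has $|N|\ge n-k\ge 1$ on a sufficiently small $U$. For $b\in N$, Lemma~\ref{tranest1} gives $\gamma_{i,j}(e_b,\bar e_b)\to\infty$ as $q\to p$, and the explicit form \eqref{helo} of $\theta_{con}$ gives $\theta_{con}(e_b,\bar e_b)\to\infty$ as well, since each $\partial_{z_m}$ with $m\in\{k+1,\dots,n\}$ is normal to an exceptional component. If $T\cap N\neq\emptyset$, pick $a_0\in T\cap N$ and use the gamma-summand with $(S,b)=(T\setminus\{a_0\},a_0)$: the inequality $t\,\gamma_{i,j}(e_{a_0},\bar e_{a_0})\prod_{a\in T\setminus\{a_0\}}\alpha_a\ge K\prod_{a\in T}\alpha_a$ holds after shrinking $U$, using that $\alpha_{a_0}$ is bounded and $\gamma_{i,j}(e_{a_0},\bar e_{a_0})\to\infty$. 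If instead $T\cap N=\emptyset$, then $N\subseteq\{1,\dots,n\}\setminus T$; pick any $b_0\in N$ and use the $\theta_{con}$-summand with $(S,b)=(T,b_0)$: $\theta_{con}(e_{b_0},\bar e_{b_0})\prod_{a\in T}\alpha_a\ge K\prod_{a\in T}\alpha_a$ for $q$ close enough to $p$. Distinct $T$'s select distinct summands in either the gamma- or the theta-sum, so combining yields the desired inequality. The edge cases $r=0$ (only the $\theta_{con}$-term is present) and $r=n$ (only the $\gamma_{i,j}$-term is present) follow from the same analysis.

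The main obstacle I anticipate is tracking the eigenframe $\{e_a(q)\}$ continuously in $q$: it is not aligned with the coordinate frame, and the ``normal-heavy'' set $N$ a priori depends on $q$. One must argue, using smoothness of $\chi$ and continuity of its spectrum, that the positive-eigenvalue subspace of $\chi(q)$ approaches $H_1'\oplus H_3'$ as $q\to p$ and that the eigenvectors can be chosen continuously so that $|N|\ge n-k$ is preserved on a small neighborhood. A cleaner variant would bypass diagonalization altogether and work directly with mixed discriminants via G{\aa}rding-type inequalities for symmetric functions of Hermitian matrices, at the cost of transparency. Either way, the essential content is the finite case analysis above, powered by the simultaneous blowup of $\gamma_{i,j}$ in directions normal to $\Xi_{i,j,k,l}$ and of $\theta_{con}$ in directions normal to the exceptional divisor.
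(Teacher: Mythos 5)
Your argument follows the paper's route: contract both sides of \eqref{ktform} against a top-degree multivector, split the resulting mixed-discriminant expansion according to whether the directions transversal to $\Xi_{i,j,k,l}$ land in the $\chi$-slots or the $\theta_Y$-slots, and invoke the blowup of $\gamma_{i,j}$ from Lemma~\ref{tranest1} in the first case and the blowup of $\theta_{con}$ from \eqref{helo} in the second; your dichotomy $T\cap N\neq\emptyset$ versus $T\cap N=\emptyset$ is exactly the paper's two cases. The one structural difference is the frame. You diagonalize $\chi$ against $\theta_Y$ pointwise, which makes the ``only diagonal entries of the extra $(1,1)$-form contribute'' reduction exact, but this is what produces the difficulty you flag at the end, and it is a real gap rather than a cosmetic one: the eigenframe $\{e_a(q)\}$ need not vary continuously, the index set $N$ is defined at each $q$ separately, and a $\theta_Y$-unit eigenvector with $a\in N$ may have arbitrarily small transversal component as $q\to p$. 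Lemma~\ref{tranest1} is stated for a smooth vector field $V^\perp$ with $V^\perp|_p$ nonzero and transversal at $p$, so it does not by itself give $\gamma_{i,j}(e_{a_0},\bar e_{a_0})\to\infty$, nor $\theta_{con}(e_{b_0},\bar e_{b_0})\to\infty$, uniformly over the relevant indices. The paper sidesteps this by fixing a single smooth local frame $V_1,\dots,V_n$, $\theta_Y$-orthonormal at $p$, with $V_1,\dots,V_k$ spanning $T_p\Xi_{i,j,k,l}$ and $V_{k+1},\dots,V_n$ transversal; these are honest smooth vector fields to which Lemma~\ref{tranest1} and \eqref{helo} apply verbatim, at the cost that in a non-eigenframe the expansion has off-diagonal contributions which the paper treats informally. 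To close your version you would need a uniform lower bound on the transversal components of the $e_a$ with $a\in N$ for $q$ near $p$; absent that, switching to the paper's fixed-frame bookkeeping is the safer way to write it up.
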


\begin{proof} Suppose the exceptional locus near $p$ is given by the intersection of $E_{k+1}$, $E_{k+2}$, .., $E_{n}$ with $m \leq n-1$. We choose local holomorphic vector fields $V_1, ...., V_n$ near $p$ such that   $\langle V_1|_p, ..., V_n|_p \rangle =T_pY$   and   $\langle V_1|_p, ... V_m|_p \rangle =  T_p \Xi_{i, j, k,l}$. We can also assume at $p$ they are orthonormal with respect to $\theta_Y$, and so by choosing a sufficiently small neighborhood $U$ of $p$, we can assume that $V_{k+1}, ...., V_n$ are transversal to $\Xi_{i, j, k, l}\cap U$.  

Apply $V_1\wedge \overline{V_1} \wedge .... \wedge V_n \wedge \overline{V_n}$ to both sides of (\ref{ktform}).  Suppose one of $V_{k+1}$, ..., $V_{n}$ contracts with  $ \theta_Y^{n-r}$ on the right, then the inequality immediately holds by contracting the same tangent vector with $\theta_{con} $. 

Suppose a subset of $V_{1}$, ..., $V_{k}$ contracts with $\theta_Y^{n-r} $ in $\chi^r\wedge\theta_Y  ^{n-r} $, (which implies that $k\geq n-r$). Then for any subspace $\mathbb{C}^{r}\cap U$ passing through $p$ and containing a transversal directions of $\Xi_{i, j, k,l}$,  
$$\frac{\chi^{r-1}\wedge \gamma_{i, j}}{\chi^{r}} \rightarrow \infty$$
near $p$ by Lemma \ref{tranest1}.   The lemma follows immediately by choosing $U$ sufficiently small after fixing $t$ and $K$.
 \end{proof}

We define
$$\gamma=\gamma(\delta) =\left( \sum_{i, j} 1 \right)^{-1}  \sum_{i, j} \gamma_{i, j}, ~\gamma_\varepsilon =\left( \sum_{i, j} 1 \right)^{-1}  \sum_{i, j} \gamma_{i, j, \varepsilon} . $$

Then following corollary immediately follows from Lemma \ref{tranest2}..
\begin{corollary}\label{tranest3} For any $K>1$ and $t>0$, there exists a neighborhood $U$ of $\cE$ such that
$$ t\chi^{r-1} \wedge \gamma  \wedge \theta_Y ^{n-r}  + \chi^r \wedge \theta_Y^{n-r-1}\wedge  \theta_{con}    \geq K \chi^r \wedge \theta_Y^{n-r}$$
in $U$ in the sense of currents.

\end{corollary}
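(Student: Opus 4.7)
The idea is to assemble the pointwise local estimates of Lemma \ref{tranest2} into a single open neighborhood of the exceptional locus $\Xi = \pi^{-1}(\cS)$, by combining positivity of each summand $\gamma_{i,j}$ in the definition of $\gamma$ with the compactness of $\Xi$. Writing $N = \sum_{i,j}1$ so that $\gamma = N^{-1}\sum_{i,j}\gamma_{i,j}$, every $\gamma_{i,j}$ is a K\"ahler current on $Y$ and hence a non-negative $(1,1)$-current; for each pair $(i_0,j_0)$ we therefore have the global current inequality $\gamma\geq N^{-1}\gamma_{i_0,j_0}$, and wedging with the smooth non-negative form $\chi^{r-1}\wedge\theta_Y^{n-r}$ gives
$$
t\,\chi^{r-1}\wedge\gamma\wedge\theta_Y^{n-r}\;\geq\;\tfrac{t}{N}\,\chi^{r-1}\wedge\gamma_{i_0,j_0}\wedge\theta_Y^{n-r}.
$$

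Next, fix $p\in\Xi$ and let $(i_0,j_0,k_0,l_0)$ be the indices of the stratum $\Xi_{i_0,j_0,k_0,l_0}$ containing $p$. I would apply Lemma \ref{tranest2} at $p$ with parameter $t/N$ in place of $t$ and with the prescribed constant $K$, successively for each of the $n+1$ values $r=0,1,\dots,n$, obtaining a neighborhood $U_p^{(r)}\subset Y$ of $p$ on which
$$
\tfrac{t}{N}\,\chi^{r-1}\wedge\gamma_{i_0,j_0}\wedge\theta_Y^{n-r}+\chi^r\wedge\theta_Y^{n-r-1}\wedge\theta_{con}\;\geq\;K\,\chi^r\wedge\theta_Y^{n-r}.
$$
Set $U_p:=\bigcap_{r=0}^{n}U_p^{(r)}$. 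Combining this with the positivity comparison above yields the desired inequality with $\gamma$ in place of $\gamma_{i_0,j_0}$ on $U_p$, uniformly for every $r$.

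The final step is a compactness extraction: since $\Xi\subset Y$ is compact, the open cover $\{U_p\}_{p\in\Xi}$ admits a finite subcover $U_{p_1},\dots,U_{p_M}$, and $U:=\bigcup_s U_{p_s}$ is an open neighborhood of $\Xi$ on which the asserted current inequality holds for every $r$. The main analytic content is entirely concentrated in Lemma \ref{tranest2}, which supplies the transversal blow-up of $\gamma_{i_0,j_0}$ along $\Xi_{i_0,j_0,k_0,l_0}$ needed to dominate the constant $K$. The only residual subtlety in the corollary itself is coordinating the choice of stratum across points: the pair $(i_0,j_0)$ used in each local application depends on which stratum contains $p$, but since the current bound $\gamma\geq N^{-1}\gamma_{i,j}$ holds globally for every $(i,j)$, the locally chosen indices assemble consistently across the finite cover.
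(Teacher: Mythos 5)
Your proof is correct and follows the same route as the paper's, which simply declares the corollary ``immediately follows from Lemma \ref{tranest2}''. You correctly supply the missing details---the global current bound $\gamma\geq N^{-1}\gamma_{i_0,j_0}$ (valid since each $\gamma_{i,j}$ is a K\"ahler current and hence nonnegative), the rescaling $t\mapsto t/N$ when invoking Lemma \ref{tranest2}, and the compactness of the exceptional locus $\Xi$ (which is what the paper's overloaded symbol $\cE$ denotes in this section) to extract a finite subcover and assemble the local neighborhoods.
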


The following corollary follows from Lemma \ref{tranest2} for the perturbed forms $\gamma_{con, \varepsilon}$ and $\theta_{con, \varepsilon}$.

\begin{corollary}\label{tranest4} For any $K>1$ and any $t>0$, there exist a neighborhood $U$ of $\cE$  and $\varepsilon_0>0$ such that for $0<\varepsilon < \varepsilon_0$,  we have 
$$ t\chi^{r-1} \wedge \gamma_\varepsilon \wedge \theta_Y ^{n-r}  + \chi^r \wedge \theta_{con} ^{n-r-1}\wedge  \theta_{con, \varepsilon}  \geq K \chi^r \wedge \theta_Y ^{n-r}$$
in $U$.

\end{corollary}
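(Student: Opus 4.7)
The plan is to derive Corollary \ref{tranest4} from Corollary \ref{tranest3} via a quantitative perturbation argument, in which the neighborhood $U$ and the perturbation scale $\varepsilon_0$ are chosen in a coupled fashion. The key point is that the smooth approximations $\gamma_\varepsilon$ and $\theta_{con,\varepsilon}$ converge to $\gamma$ and $\theta_{con}$ in $C^2$ only away from the exceptional locus $\cE$; near $\cE$ one must track the behavior of $\ddbar(\rho_{i,j}+\varepsilon)^{1-\delta}$ and $\ddbar(|\sigma_{E_k}|^2_{h_{E_k}}+\varepsilon)^{1-\delta}$ by hand.

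First I would apply Corollary \ref{tranest3} with a strictly larger constant, for concreteness $K' = K+1$, and the same $t$, obtaining a neighborhood $U_0$ of $\cE$ on which
\[
t\,\chi^{r-1}\wedge\gamma\wedge\theta_Y^{n-r} + \chi^r\wedge\theta_Y^{n-r-1}\wedge\theta_{con} \;\geq\; (K+1)\,\chi^r\wedge\theta_Y^{n-r}
\]
holds as currents for every $0\leq r\leq n$. I then split $U_0$ at a small threshold $\eta>0$ into a bulk part $U_0^{bulk}(\eta)=\{\rho_{i,j}\geq\eta\ \text{and}\ |\sigma_{E_k}|^2_{h_{E_k}}\geq\eta\ \forall i,j,k\}$ and its complementary tube $U_0^{tube}(\eta)$. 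On the bulk, $(\rho_{i,j}+\varepsilon)^{1-\delta}\to\rho_{i,j}^{1-\delta}$ and $(|\sigma_{E_k}|^2+\varepsilon)^{1-\delta}\to|\sigma_{E_k}|^{2(1-\delta)}$ uniformly in $C^2$ as $\varepsilon\to 0$, hence $\gamma_\varepsilon\to\gamma$ and $\theta_{con,\varepsilon}\to\theta_{con}$ uniformly. Passing the $(K+1)$-inequality through this uniform limit (and using $\theta_{con}^{n-r-1}\geq\theta_Y^{n-r-1}$ as $(n-r-1,n-r-1)$-forms, since $\theta_{con}-\theta_Y\geq 0$) yields the desired inequality with constant $K$ on $U_0^{bulk}(\eta)$ once $\varepsilon<\varepsilon_0(\eta)$.

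On the tube $U_0^{tube}(\eta)$, I would repeat the local contraction argument of Lemma \ref{tranest2} with the perturbed forms. The required geometric input is a perturbed analog of Lemma \ref{tranest1}: for a transverse direction $V^\perp$ to $\Xi_{i,j,k,l}$, the ratio $\gamma_{i,j,\varepsilon}(V^\perp,\overline{V^\perp})/\omega(V^\perp,\overline{V^\perp})$ is comparable to $(\rho_{i,j}+\varepsilon)^{-\delta}$, which is large once $\rho_{i,j}+\varepsilon \leq 2\eta$ and $\eta$ has been chosen small in terms of $K$. Similarly $\theta_{con,\varepsilon}$ dominates $\theta_Y$ in directions contracted by the exceptional divisors as long as $\varepsilon$ is smaller than the local tube thickness. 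Contracting both sides of the claimed inequality with $V_1\wedge\overline{V_1}\wedge\cdots\wedge V_n\wedge\overline{V_n}$ as in the proof of Lemma \ref{tranest2} and examining cases on which factors contract with $\theta_Y^{n-r}$ produces the required domination by $K\chi^r\wedge\theta_Y^{n-r}$ throughout the tube.

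The main obstacle, and the reason the corollary does not follow verbatim from Corollary \ref{tranest3}, is precisely this non-uniform convergence of the perturbed forms near $\cE$: the approximation $\gamma_\varepsilon\to\gamma$ degrades exactly where the transverse blow-up of $\gamma$ is essential to the inequality. The resolution is a two-scale decomposition with parameters chosen in the order $K \mapsto K'=K+1 \mapsto U_0 \mapsto \eta=\eta(K) \mapsto \varepsilon_0=\varepsilon_0(\eta,K)$, after which one shrinks $U_0$ to a smaller $U\subset U_0$ inside which both the bulk and tube estimates apply.
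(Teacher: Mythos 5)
Your proposal reaches the right conclusion and correctly identifies the central analytic issue --- that $\gamma_\varepsilon$ approximates $\gamma$ poorly exactly where the transverse blow-up of $\gamma$ is being exploited, so that the neighborhood $U$ and the threshold $\varepsilon_0$ must be chosen in tandem. That is precisely what makes this corollary non-trivial.

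However, the route is more elaborate than what the paper (implicitly) does, and the bulk/tube decomposition is unnecessary. The paper's proof is just ``re-run Lemma \ref{tranest2} with $\gamma_{i,j,\varepsilon}$ and $\theta_{con,\varepsilon}$ in place of $\gamma_{i,j}$ and $\theta_{con}$'': for each $p\in\Xi_{i,j,k,l}$ one shrinks $U_p$ so that $\rho_{i,j}$ (respectively $|\sigma_{E_i}|^2_{h_{E_i}}$) is small throughout $U_p$, and then for $\varepsilon$ below a corresponding threshold the transversal ratios $(\rho_{i,j}+\varepsilon)^{-\delta}$ and $(|\sigma_{E_i}|^2_{h_{E_i}}+\varepsilon)^{-\delta}$ are uniformly large on $U_p$; the two contraction cases in Lemma \ref{tranest2} then go through verbatim. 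Passing to a finite cover of the compact set $\cE$ and taking the minimum of the $\varepsilon_0(p)$'s gives $U$ and $\varepsilon_0$. Since the resulting $U$ is already a thin tube around $\cE$ on which $\rho_{i,j}$ is uniformly small, there is no residual ``bulk'' on which a separate compact-perturbation argument via Corollary \ref{tranest3} is needed. Your two-scale argument is logically valid but buys nothing over the direct one; what it does correctly is make the order of quantifier choices (neighborhood first, then $\varepsilon_0$) explicit.

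One step you should not rely on is the assertion $\theta_{con}-\theta_Y\ge 0$. From \eqref{helo}, $\theta_{con}-\theta_Y=\sum_i\ddbar|\sigma_{E_i}|^{2(1-\delta)}_{h_{E_i}}$, and a direct computation shows this $(1,1)$-current has a curvature term $-(1-\delta)|\sigma_{E_i}|^{2(1-\delta)}_{h_{E_i}}\,\Theta(h_{E_i})$ which can be negative; the paper's ``suitably scaled'' hypothesis guarantees $\theta_{con}>0$, not $\theta_{con}\ge\theta_Y$. You invoke this inequality only to pass from the middle term $\chi^r\wedge\theta_Y^{n-r-1}\wedge\theta_{con,\varepsilon}$ to $\chi^r\wedge\theta_{con}^{n-r-1}\wedge\theta_{con,\varepsilon}$ as in the printed statement. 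But the printed $\theta_{con}^{n-r-1}$ is almost certainly a typo for $\theta_Y^{n-r-1}$: Corollary \ref{tranest3} has $\theta_Y^{n-r-1}$, and the place where Corollary \ref{tranest4} is applied --- case $(3)$ in the proof of Proposition \ref{appsupsol} --- also has $\chi^{n-k}\wedge\theta_Y^{k-1}\wedge\theta_{con,\varepsilon}$. If you prove the $\theta_Y^{n-r-1}$ version directly you should drop the $\theta_{con}\ge\theta_Y$ step entirely; as written, it is an unjustified claim, even if it is harmless in context.
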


\begin{lemma} There exist $\delta_0>0$, $p>1$ and $C>0$ such that for all $\delta< \delta_0$, 

$$\left\| \frac{(\gamma +   \theta)^n}{\theta_Y^n} \right\|_{L^p(Y, \theta_Y^n)} \leq C. $$

\end{lemma}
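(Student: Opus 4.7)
The plan is to reduce the global $L^p$ bound to a finite collection of local computations near the exceptional locus $\Xi\subset Y$. Away from $\Xi$ both $\gamma$ and $\theta$ restrict to smooth forms on $Y$, so the quotient $(\gamma+\theta)^n/\theta_Y^n$ is uniformly bounded on any compact set of $Y\setminus\Xi$; the only issue is integrability in a neighborhood of $\Xi$, which we cover by finitely many adapted coordinate charts.

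First, fix $p\in\Xi_{i,j,k,l}$ and choose local holomorphic coordinates $(z_1,\ldots,z_n)$ on a chart $U\ni p$ adapted to the log resolution, so that the exceptional components meeting $p$ are the coordinate hyperplanes $\{z_\mu=0\}$ for $\mu$ in some index set $I_p\subset\{1,\ldots,n\}$. Because $\pi$ is a log resolution and each $\rho_{i,j}$ is a sum of squared moduli of holomorphic sections vanishing on $\overline{\cS_{i,j}}$, the pullback $\pi^*\rho_{i,j}$ factors on $U$ as
$$\pi^*\rho_{i,j}(z) \;=\; |h_{i,j}(z)|^2 \prod_{\mu\in I_p} |z_\mu|^{2 a_{i,j,\mu}}$$
for some smooth nowhere-vanishing $h_{i,j}$ and nonnegative integers $a_{i,j,\mu}$ that are independent of $\delta$.

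Second, the chain rule
$$\ddbar \rho^{1-\delta} \;=\; (1-\delta)\rho^{-\delta}\,\ddbar\rho - \delta(1-\delta)\rho^{-1-\delta}\,\partial\rho\wedge\dbar\rho,$$
combined with the identity $\partial\rho = \rho\,\partial\log\rho$, which forces the apparently most singular term to collapse from order $\rho^{-1-\delta}$ to order $\rho^{-\delta}$, shows that $\ddbar(\pi^*\rho_{i,j})^{1-\delta}$ is dominated as a $(1,1)$-form by $C\prod_\mu|z_\mu|^{-2\delta a_{i,j,\mu}}$ times a smooth positive form on $U$. Averaging over the strata and adding $\pi^*\chi$ gives
$$\gamma \;\leq\; C\prod_{\mu\in I_p}|z_\mu|^{-2\delta A_\mu}\,\theta_Y,$$
with $A_\mu := \max_{i,j} a_{i,j,\mu}$ independent of $\delta$. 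A parallel bound holds for $\theta$, either trivially (if $\theta\leq C\theta_Y$) or, if $\theta$ is of conical type with angle $2(1-\delta)\pi$, with an additional factor $\prod_\mu|z_\mu|^{-2\delta}$.

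Third, by the elementary inequality $(\eta+\zeta)^n\leq 2^n(\eta^n+\zeta^n)$ for semipositive $(1,1)$-forms,
$$(\gamma+\theta)^n \;\leq\; C \prod_{\mu\in I_p}|z_\mu|^{-2n\delta A_\mu}\,\theta_Y^n.$$
Setting $A:=\max_{p,\mu}A_\mu$, which is finite after extracting a finite subcover of $\Xi$, the integral $\int_U \prod_\mu|z_\mu|^{-2n\delta A p}\,dV$ converges as soon as $2n\delta A p<2$ in each coordinate. Choosing $\delta_0:=1/(4nA)$ and $p:=2$ gives $2n\delta A p\leq 1<2$ for all $\delta<\delta_0$, and summing over a finite partition of unity subordinate to these charts, together with the uniform bound on the complement of a tubular neighborhood of $\Xi$, produces the claimed uniform $L^p(Y,\theta_Y^n)$ estimate. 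The main obstacle is the second step: a careful verification that the formally worst term $\rho^{-1-\delta}\partial\rho\wedge\dbar\rho$ really reduces, after exploiting the monomial factorization of $\pi^*\rho_{i,j}$, to an expression of order at most $\rho^{-\delta}$; once this elementary but delicate cancellation is in hand the integrability step is routine Fubini.
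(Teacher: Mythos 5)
Your argument is correct and follows essentially the same route as the paper: both proofs establish a pointwise bound of the form $\gamma,\ \theta_{con} \leq C\,|\sigma_E|^{-2\epsilon'}\theta_Y$ with an exponent $\epsilon'$ that can be made arbitrarily small by taking $\delta$ small, and then observe that $\int_Y |\sigma_E|^{-2n\epsilon' p}\theta_Y^n < \infty$ for a suitable $p>1$. Your writeup supplies the details the paper's two-line proof omits. Two remarks on the step you flag as the ``main obstacle.'' First, the monomial factorization $\pi^*\rho_{i,j}=|h_{i,j}|^2\prod_\mu|z_\mu|^{2a_{i,j,\mu}}$ is not automatic from the paper's stated choice of $\pi$, which is only required to make $\pi^{-1}(\cS)$ an SNC divisor; you need $\pi$ to also principalize each $\cJ_{i,j}$, which is standard to arrange by further blowups (or one can avoid it entirely by only using the weaker fact that $\pi^*\rho_{i,j}$ vanishes along $\Xi$ to some bounded finite order $A$, giving $\pi^*\rho_{i,j}\geq c\,|\sigma_E|^{2A}$ directly, which is all that is used). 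Second, the rewriting $\partial\rho=\rho\,\partial\log\rho$ by itself does not produce the claimed drop in singularity order; the real mechanism is that $\rho$ is a sum of squared norms of holomorphic sections, so Cauchy--Schwarz applied to those sections gives $\sqrt{-1}\,\partial\rho\wedge\dbar\rho \leq \rho\,\bigl(\ddbar\rho + C\rho\,\theta_Y\bigr)$, whence $\rho^{-1-\delta}\sqrt{-1}\,\partial\rho\wedge\dbar\rho \leq \rho^{-\delta}\ddbar\rho + C\rho^{1-\delta}\theta_Y$ with no delicate cancellation required. With these two points clarified, the remainder (elementary inequality for the $n$th power, Fubini, finite subcover) is exactly as you say.
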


\begin{proof} For any sufficiently small $\epsilon' \in (0, 1)$, there exist $\delta_0>0$ and $C>0$ such that for all $\delta<\delta_0$, 
$$ \gamma \leq C\left( \sum_{i=1}^I  |\sigma_{E_i}|^2_{h_{E_i}} \right)^{-\epsilon'}\theta_Y, ~\theta_{con}  \leq C \left(\sum_{i=1}^I  |\sigma_{E_i}|^2_{h_{E_i}} \right)^{-\epsilon'}\theta_Y, $$
where $  \sigma_{E_i}$ and $  h_{E_i}$ are defined in (\ref{helo}). Then  
$$\int_Y \left| \frac{(\gamma +   \theta)^n}{\theta_Y^n} \right|^p \theta_Y^n \leq C \int_Y |\sigma_E|^{-2\epsilon' p} \theta_Y^n.$$
and the lemma follows immediately by choosing a suitable $p>1$. 
\end{proof} 

The following corollary follows from Lemma \ref{helo} by for the perturbed forms  for $\gamma_\varepsilon$ and $\theta_{con, \varepsilon}$. 

\begin{corollary}\label{lpp0}

There exists $\delta_0>0$, $p>1$ and $C>0$ such that for all $0<\delta< \delta_0$ and $0<\varepsilon<1$, 

$$\left\| \frac{(\gamma_\varepsilon +  \theta_{con, \varepsilon} )^n}{\theta_Y^n} \right\|_{L^p(Y, \theta_Y^n)} \leq C. $$

\end{corollary}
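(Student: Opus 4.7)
The plan is to repeat the proof of the previous lemma with the regularized forms $\gamma_\varepsilon$ and $\theta_{con,\varepsilon}$ in place of $\gamma$ and $\theta_{con}$, and to verify that every estimate carries through with constants independent of $\varepsilon\in(0,1)$.

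The first step would be to establish the analogue of the pointwise bound from the previous lemma:
$$\gamma_\varepsilon \leq C\Bigl(\sum_{i=1}^I |\sigma_{E_i}|^2_{h_{E_i}}\Bigr)^{-\epsilon'}\theta_Y, \qquad \theta_{con,\varepsilon}\leq C\Bigl(\sum_{i=1}^I|\sigma_{E_i}|^2_{h_{E_i}}\Bigr)^{-\epsilon'}\theta_Y,$$
for all sufficiently small $\delta>0$, with $C$ and $\epsilon'$ independent of $\varepsilon$. Writing $\rho$ for either of the building blocks $\rho_{i,j}$ or $|\sigma_{E_i}|^2_{h_{E_i}}$, a direct computation yields
$$\ddbar(\rho+\varepsilon)^{1-\delta} = -\delta(1-\delta)(\rho+\varepsilon)^{-\delta-1}\partial\rho\wedge\dbar\rho + (1-\delta)(\rho+\varepsilon)^{-\delta}\ddbar\rho.$$
The cross term is non-positive and hence harmless for an upper bound, while the remaining piece is dominated using the elementary monotonicity $(\rho+\varepsilon)^{-\delta}\leq \rho^{-\delta}$ on $\{\rho>0\}$ together with the fact that $\pi^*\rho_{i,j}$ factors locally on $Y$ as $\prod_k|\sigma_{E_k}|^{2a_k}\cdot(\text{non-vanishing smooth})$ for positive exponents $a_k$. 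The smooth reference forms $\chi$ and $\theta_Y$ from the definitions of $\gamma_{i,j,\varepsilon}$ and $\theta_{con,\varepsilon}$ are absorbed into the constant $C$.

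Combining these two uniform-in-$\varepsilon$ bounds and expanding the $n$-th power gives
$$\frac{(\gamma_\varepsilon+\theta_{con,\varepsilon})^n}{\theta_Y^n}\leq C\Bigl(\sum_{i=1}^I|\sigma_{E_i}|^2_{h_{E_i}}\Bigr)^{-n\epsilon'}$$
pointwise on $Y$, uniformly in $\varepsilon\in(0,1)$. The corollary then reduces to the finiteness of
$$\int_Y \Bigl(\sum_{i=1}^I|\sigma_{E_i}|^2_{h_{E_i}}\Bigr)^{-n\epsilon' p}\theta_Y^n,$$
which is the standard integrability statement for holomorphic sections whose zero divisor has simple normal crossings, and is finite whenever $n\epsilon'p<1$. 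Choosing $\delta_0$ small enough that $\epsilon'$ is small, and then $p>1$ sufficiently close to $1$, produces the claimed uniform $L^p$-constant.

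I expect the only real obstacle to be the first step, namely confirming the $\varepsilon$-uniformity of the pointwise majorants: one must rule out that the Hessian of $(\rho+\varepsilon)^{1-\delta}$ introduces a hidden $\varepsilon^{-1}$-type singularity as $\rho\to 0$. The favorable sign of the cross term, together with the monotonicity $(\rho+\varepsilon)^{-\delta}\leq \rho^{-\delta}$, is precisely what prevents such a blow-up, so the proof amounts to careful bookkeeping rather than a new estimate beyond the unregularized case.
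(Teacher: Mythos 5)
Your proof is correct and takes essentially the same route as the paper, which simply asserts that the corollary follows from the preceding lemma applied to the perturbed forms and gives no further detail. You fill in exactly the verification the paper leaves implicit: the computed Hessian of $(\rho+\varepsilon)^{1-\delta}$ has a non-positive gradient-square term that can be discarded for the upper bound, the remaining term is controlled via $(\rho+\varepsilon)^{-\delta}\le\rho^{-\delta}$, and the resulting pointwise majorant $\bigl(\sum_i|\sigma_{E_i}|^2_{h_{E_i}}\bigr)^{-\epsilon'}\theta_Y$ is uniform in $\varepsilon\in(0,1)$, so the $L^p$ bound carries over with the same choice of $\delta_0$ and $p$ as in the unregularized case.
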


We will fix a sufficiently large constant $A>>1$.

\begin{proposition} \label{appsupsol} There exists $C>0$ such that for any $t\in (0, 1)$, there exist $s_0 >0$ and $\varepsilon_0>0$ such that for all $s\in (0, s_0)$ and $\varepsilon\in (0, \varepsilon_0)$, we have   %
\begin{equation}\label{supcomp}  
(\chi+ At \chi + As\theta_Y)^{n-1} \wedge (\omega + t\gamma_\varepsilon + s \theta_{con, \varepsilon})  >( \overline \xi - \overline\epsilon) e^{\psi}  (\chi+At\chi + As \theta_Y)^n . 
\end{equation}
and 
\begin{equation}\label{lpp}
\left\| \frac{(\omega+ t \gamma_\varepsilon + s \theta_{con, \varepsilon})^n}{\theta_Y^n} \right\|_{L^p(Y, \theta_Y^n)} \leq C, %
\end{equation}
where $\overline \xi$ and $\overline \epsilon$ are given by (\ref{subjs0}) and (\ref{subjs}).

\end{proposition}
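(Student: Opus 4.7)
The proposition splits into two independent claims: the pointwise strict super-solution inequality \eqref{supcomp} and the uniform $L^p$-bound \eqref{lpp}. The bound \eqref{lpp} is the easier half. Since $t,s\in(0,1)$ gives the pointwise domination $\omega + t\gamma_\varepsilon + s\theta_{con,\varepsilon} \le \omega + \gamma_\varepsilon + \theta_{con,\varepsilon}$ as positive $(1,1)$-forms, expanding the $n$-th wedge power, combining with Corollary \ref{lpp0} (which controls $(\gamma_\varepsilon + \theta_{con,\varepsilon})^n/\theta_Y^n$ in $L^p$), and using the $L^\infty$-boundedness of $\omega^n/\theta_Y^n$ yields \eqref{lpp}.

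To prove \eqref{supcomp}, I would fix $t\in(0,1)$ and pick a sufficiently thin tubular neighborhood $U\subset Y$ of the exceptional locus $\Xi = \pi^{-1}(\cS)$ so that Corollary \ref{tranest4} applies inside $U$ with a prescribed factor $K>1$. On the compact complement $Y\setminus\overline{U}$, the perturbations $\gamma_\varepsilon$ and $\theta_{con,\varepsilon}$ converge smoothly to the K\"ahler currents $\gamma$ and $\theta_{con}$ as $\varepsilon\to 0$, and both limits are smooth and uniformly bounded there. Letting $s,\varepsilon\to 0$ in \eqref{supcomp}, after cancelling the common factor $(1+At)^{n-1}$, reduces it to the pointwise strict inequality
$$\chi^{n-1}\wedge(\omega + t\gamma) > (\overline\xi - \overline\epsilon)(1+At)e^\psi \chi^n.$$
Combining the super-solution inequality $\chi^{n-1}\wedge\omega \ge \overline\xi e^\psi\chi^n$ from \eqref{subjs0} with the lower bound $\gamma \ge c\omega$ coming from $\gamma$ being a K\"ahler current on $X$ yields this, provided $A$ has been calibrated in terms of $c$, $\overline\xi$ and $\overline\epsilon$; continuity then extends the strict inequality to small positive $s,\varepsilon$.

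Inside $U$, I would expand both sides of \eqref{supcomp} by multi-linearity. Every summand of $\hat\chi^n = ((1+At)\chi + As\theta_Y)^n$ has the form $\chi^r\wedge\theta_Y^{n-r}$ up to constants, while every summand of $\hat\chi^{n-1}\wedge\hat\omega$ involving the positive perturbations $t\gamma_\varepsilon$ or $s\theta_{con,\varepsilon}$ is exactly of the type controlled by Corollary \ref{tranest4}. Taking $K$ there sufficiently large depending on $A$, $t$, $\overline\xi$, $\overline\epsilon$ and $\|\psi\|_\infty$, and using non-negativity of the remaining $\omega$-terms, gives \eqref{supcomp} on $U$ uniformly in $s\in(0,s_0)$ and $\varepsilon\in(0,\varepsilon_0)$.

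The main obstacle is the balancing on $Y\setminus\overline{U}$: the super-solution inequality from \eqref{subjs0} may be saturated (since the infimum defining $\overline\xi$ can be attained on $X^\circ$), so the strict margin required to overcome the rescaling factor $(1+At)$ on the right must come entirely from the excess $t\chi^{n-1}\wedge\gamma$ on the left. This is precisely what forces the fixed constant $A$ to be chosen in a range dictated by the K\"ahler-current lower-bound constant $c$ for $\gamma$ and by the slack $\overline\epsilon$. Once this calibration is in place, the remaining arguments are routine book-keeping of the multi-linear expansions together with compactness and continuity.
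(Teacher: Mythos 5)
Your proposal follows essentially the same route as the paper: multilinear expansion in powers of $s$, Corollary \ref{tranest4} to handle the mixed terms near the exceptional locus, the super-solution inequality \eqref{subjs0} on the compact complement, and Corollary \ref{lpp0} for the $L^p$ bound. Where you deviate from the paper's write-up is in the $s^0$ term away from the exceptional locus, and your observation here is pertinent: the paper drops $t\gamma_\varepsilon$ and asserts $(1+At)^{-1}\chi^{n-1}\wedge\omega/\chi^n > (\overline\xi-\overline\epsilon)e^\psi$, but since \eqref{subjs0} only gives $\chi^{n-1}\wedge\omega \ge \overline\xi\,e^\psi\chi^n$ this requires $At \le \overline\epsilon/(\overline\xi-\overline\epsilon)$, which fails for $t$ near $1$ once $A$ is fixed large. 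Your retention of the $t\chi^{n-1}\wedge\gamma$ excess together with the K\"ahler-current bound $\gamma \ge c\,\omega$ improves the constraint to roughly $A \le c\,\overline\xi/(\overline\xi-\overline\epsilon)$.

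However, this remains an \emph{upper} bound on $A$, and you do not verify that it is compatible with the \emph{lower} bound on $A$ that the companion Proposition \ref{appsubsol} requires: the cases $1\le a+b\le n-1$ there produce ratios equal to $A^{-1}$, forcing $A^{-1} < \frac{n}{n-1}(\overline\xi - 2\overline\epsilon)e^\psi$, i.e.\ $A > \frac{n-1}{n}\,\max_Y e^{-\psi}/(\overline\xi - 2\overline\epsilon)$. If $\psi$ has large oscillation, or if $\overline\xi$ is small, or if the constant $c$ in $\gamma\ge c\,\omega$ is small, there is no $A$ satisfying both constraints, and the two propositions cannot both be applied with the same value of $A$ (which they must be, since $A$ determines the cohomology class $(1+At)\alpha + As[\theta_Y]$ in which $\chi_t$ and the super-solution must both lie). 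You flag this calibration as ``the main obstacle'' but leave it unresolved; this is the substantive gap in your sketch, and it is also glossed over in the paper (which simply says ``for a fixed $A\gg 1$ by our choice of $\chi$'' without reconciling the two requirements). To close it one would either need to exhibit an explicit nonempty window for $A$ from the hypotheses of Theorem \ref{mainthm4}, or use a different representative of the class $(1+At)\alpha + As[\theta_Y]$ as the super-solution so that the $s^0$ comparison does not lose a full factor $(1+At)$.
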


\begin{proof} We will compare terms containing $s^k$ in both  
$$(I)=(\chi+ At \chi + As\theta_Y)^{n-1} \wedge (\omega + t\gamma_\varepsilon + s \theta_{con, \varepsilon}) $$ and $$(II) = (\chi+At\chi + As \theta_Y)^n$$  for $0\leq k \leq n$. 

\begin{enumerate}

\smallskip

\item $k=0$: We have 
$(1+At)^{n-1}\chi^{n-1} \wedge(\omega+t\gamma_\varepsilon)$ in $(I)$ and 
$(1+At)^n\chi^n$ in $(II)$. Their ratio is given by 
$$ (1+At)^{-1} \left(  \frac{ \chi^{n-1} \wedge(\omega+t\gamma_\varepsilon)} { \chi^n} \right)>(1+At)^{-1} \left(  \frac{ \chi^{n-1} \wedge\omega} { \chi^n} \right) >  (\overline\xi -\overline\epsilon) e^{ \psi}$$
for a fixed $A>>1$ by our choice of $\chi$.

\medskip

\item $k=n$: We have  $ A^{n-1} \theta_Y^{n-1} \wedge \theta_{con, \varepsilon} $ in $(I)$ and 
$A^n \theta_Y ^n$ in $(II)$. We can find a sufficiently small neighborhood $U_1$ of $\cE$ such that the ratio of  $ A^{n-1} \theta_Y^{n-1} \wedge \theta_{con, \varepsilon} $ and $A^n \theta_Y ^n$ is given by 
$$A^{-1} \frac{ \theta_Y^{n-1} \wedge \theta_{con, \varepsilon}}{ \theta_Y^n} \geq \overline\xi e^{ \psi}.$$
for all sufficiently small $\varepsilon>0$, by the choice of $\theta_{con, \varepsilon}$.

\medskip

\item $0< k < n$: We have  $$\left( C^{n-1}_k (1+At)^{n-k-1}  A^k   \chi^{n-k-1}\wedge(\omega+ t\gamma_\varepsilon)\wedge    \theta_Y^k    +  C^{n-1}_{n-k} (1+At)^{n-k} A^{k-1}      \chi^{n-k} \wedge \theta_Y^{k-1} \wedge  \theta_{con, \varepsilon} \right)$$ in $(I$) and 
$C^n_k (1+At)^{n-k} A^k         \chi^{n-k} \wedge \theta_Y^k$ in $(II)$. We can find a sufficiently small neighborhood $U_2=U_2(A, t)$ of $\cE$  so that  their ratio is given by

\begin{eqnarray*}
&&\frac{ C^{n-1}_k (1+At)^{n-k-1}  A^k   \chi^{n-k-1}\wedge(\omega+ t\gamma_\varepsilon)\wedge    \theta_Y^k    +  C^{n-1}_{n-k} (1+At)^{n-k} A^{k-1}      \chi^{n-k} \wedge \theta_Y^{k-1} \wedge \theta_{con, \varepsilon}}{ C^n_k (1+At)^{n-k} A^k         \chi^{n-k} \wedge \theta_Y^k}       \\
&\geq& c(A, t) \frac{ t \chi^{n-k-1}\wedge \gamma_\varepsilon \wedge \theta_Y^k +   \chi^{n-k} \wedge \theta_Y^{k-1}\wedge  \theta_{con, \varepsilon}}{\chi^{n-k}\wedge \theta_Y^k}\\
&\geq& \overline \xi e^\psi
\end{eqnarray*}
for all sufficiently small $\varepsilon>0$, by Corollary \ref{tranest4}.

\end{enumerate}

Then for any sufficiently small $s>0$, we can assume that on $Y\setminus \left( U_1\cap U_2 \right)$, the terms both on the top and the bottoem in (2) and (3) will be much smaller than $\chi^n$ since they contain $s^k$. Estimate (\ref{supcomp})  then easily follows. Estimate (\ref{lpp}) directly follows from Corollary \ref{lpp0} since $\omega$ is bounded by a fixed multiple of $\theta_Y$. We have now completed the proof of the proposition.
\end{proof}

\begin{proposition} \label{appsubsol} There exist $s_0 >0$ and $\varepsilon_0>0$ such that for any $s\in (0, s_0)$, $\varepsilon\in (0, \varepsilon_0)$, we have

\begin{equation}\label{subcomp}
(\underline\chi+ At \gamma_\varepsilon + As \theta_{con, \varepsilon})^{n-2} \wedge (\omega + t\gamma_\varepsilon + s  \theta_{con, \varepsilon}) < \frac{n}{n-1} (\overline \xi - 2\overline\epsilon) e^{\psi}  (\underline\chi+At\gamma_\varepsilon + As \theta_{con, \varepsilon})^{n-1}. 
  \end{equation}

\end{proposition}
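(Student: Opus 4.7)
\emph{Plan.} The proof runs parallel to Proposition \ref{appsupsol}: expand both sides of \eqref{subcomp} multinomially in powers of $t$ and $s$, compare term by term, and close the inequality using \eqref{subjs}, its consequence \eqref{subjs2}, and a sufficiently large choice of the fixed constant $A$. Writing $\tilde\beta := \underline\chi + At\gamma_\varepsilon + As\theta_{con,\varepsilon}$, the LHS of \eqref{subcomp} is $\tilde\beta^{n-2}\wedge(\omega + t\gamma_\varepsilon + s\theta_{con,\varepsilon})$ and the RHS is $\frac{n}{n-1}(\overline\xi - 2\overline\epsilon)e^\psi\tilde\beta^{n-1}$. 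Grouping by the order $t^b s^c$ with $0\le b+c\le n-1$, the LHS contributes an \emph{$\omega$-type piece} $\binom{n-2}{n-2-b-c,\,b,\,c}A^{b+c}\underline\chi^{n-2-b-c}\wedge\gamma_\varepsilon^b\wedge\theta_{con,\varepsilon}^c\wedge\omega$ (present when $b+c\le n-2$) together with a \emph{$\tilde\beta$-type piece} of the form $\bigl(\binom{n-2}{n-1-b-c,b-1,c}+\binom{n-2}{n-1-b-c,b,c-1}\bigr)A^{b+c-1}\underline\chi^{n-1-b-c}\wedge\gamma_\varepsilon^b\wedge\theta_{con,\varepsilon}^c$, while the RHS contributes $\frac{n}{n-1}(\overline\xi-2\overline\epsilon)e^\psi\binom{n-1}{n-1-b-c,\,b,\,c}A^{b+c}\underline\chi^{n-1-b-c}\wedge\gamma_\varepsilon^b\wedge\theta_{con,\varepsilon}^c$.

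The leading case $(b,c)=(0,0)$ reduces to the strict sub-solution inequality \eqref{subjs}, which on the compact smooth $X$ yields a uniform margin $\tau\in(0,1)$ with LHS $\le (1-\tau)\,$RHS. For $(b,c)$ with $1\le b+c\le n-2$, I apply \eqref{subjs2} with $m=n-1-b-c$, which, after wedging against the smooth strongly positive form $\gamma_\varepsilon^b\wedge\theta_{con,\varepsilon}^c$ and invoking the multinomial identity
\[
\binom{n-2}{n-2-b-c,\,b,\,c}\cdot\frac{n}{n-1-b-c}\;=\;\frac{n}{n-1}\binom{n-1}{n-1-b-c,\,b,\,c},
\]
bounds the $\omega$-type piece by $(1-\tau)$ times the $(b,c)$-term of the RHS. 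The $\tilde\beta$-type piece, whose coefficient is $O(A^{b+c-1})$ against $A^{b+c}$ on the RHS, is bounded by $\tau\,$RHS for $A$ sufficiently large (depending on $n$, $\tau$, $\overline\xi-2\overline\epsilon$, and $\|\psi\|_{C^0}$). Summing the two contributions gives a strict inequality at each order $(b,c)$. The edge case $b+c=n-1$ has no $\omega$-type piece and is handled by the large-$A$ argument alone.

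With $A$ fixed from above, the perturbation follows by continuity: each $\gamma_\varepsilon$ and $\theta_{con,\varepsilon}$ is smooth on the compact $Y$ for $\varepsilon>0$, and every multinomial term in both sides of \eqref{subcomp} depends continuously on $(s,\varepsilon)$. The strict term-by-term inequalities, valid pointwise on $Y\setminus\Xi$ from the unperturbed $X$-side estimates and extending as semi-positive inequalities to $\Xi$, persist in a uniform neighborhood of $(s,\varepsilon)=(0,0)$; in fact, near $\Xi$ the pullback $\underline\chi$ degenerates, but the dominant contribution to $\tilde\beta$ comes from $At\gamma_\varepsilon+As\theta_{con,\varepsilon}$, so the ratio LHS/RHS is of order $1/A$ and hence harmless. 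This furnishes the desired $s_0,\varepsilon_0>0$.

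The main obstacle is verifying that the margin $\tau$ from \eqref{subjs}, which a priori only controls the leading term, actually propagates to every $(b,c)$ multinomial contribution with the \emph{same} value of $\tau$. This is precisely the content of the combinatorial identity displayed above, together with the preservation of strict inequalities under wedging by smooth positive forms, which at the pointwise level reduces to a uniform eigenvalue bound of $\omega$ with respect to $\underline\chi$ that simultaneously handles all $1\le m\le n-1$ via the monotonicity $\max_{|I|=m}\sum_{j\in I}\lambda_j\le \max_{|I|=n-1}\sum_{j\in I}\lambda_j=S_1-\lambda_{\min}$.
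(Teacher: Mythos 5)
Your proposal takes essentially the same route as the paper's proof: expand both sides multinomially in powers of $t^a s^b$, control the $\omega$-bearing terms at each order $1\le a+b\le n-2$ via \eqref{subjs2} with $m=n-1-a-b$ (wedged against $\gamma_\varepsilon^a\wedge\theta_{con,\varepsilon}^b$), and absorb the remaining $\omega$-free terms, whose coefficient carries $A^{a+b-1}$ against $A^{a+b}$ on the right, by choosing $A$ large (with the $a+b=n-1$ case handled by the $A^{-1}$ bound alone). The paper states the same term-by-term comparison more tersely; your explicit multinomial coefficient identity, the uniform margin $\tau$, and the eigenvalue-monotonicity remark for \eqref{subjs}$\Rightarrow$\eqref{subjs2} are correct elaborations of steps the paper leaves implicit rather than a genuinely different argument.
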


\begin{proof}

We will compute the ratio of terms  in 
$$(I)=(\underline\chi+ At \gamma_\varepsilon + As \theta_{con, \varepsilon})^{n-2} \wedge (\omega + t\gamma_\varepsilon + s  \theta_{con, \varepsilon})$$ and $$(II)=(\underline\chi+At\gamma_\varepsilon + As \theta_{con, \varepsilon})^{n-1}$$  that contains $s^a t^b $ for given $a, b\geq 0$. 

\begin{enumerate}

\item $a=b=0$: For any $p\in Y\setminus \cE$ and  $(n-1)$-dimensional subspace of $H \subset T_p Y$,  we have  

$$ \left. \frac{ \underline\chi^{n-2} \wedge\omega}{\underline\chi^{n-1}}\right|_H  <   \frac{n}{n-1}(\overline \xi - 2\overline\epsilon) e^{ \psi}$$
by (\ref{subjs}). 

\medskip

\item $1\leq a+b\leq n-2$:  We first consider the terms containing $\underline \chi^{n-2-a-b }\wedge \omega$ in (I) and those containing $\underline\chi^{n-1-a-b}$ in (II). For any $p\in Y\setminus \cE$ and  $(n-1-a-b)$-dimensional complex subspace of $H \subset T_p Y$, their ratio is given by
\begin{eqnarray*}  
&& \left. \frac{ C^{n-2}_{a+b}  ~ \underline \chi^{n-2-a-b}\wedge \omega\wedge \gamma_\varepsilon^a\wedge  \theta_{con, \varepsilon}^b   }{ C^{n-1}_{a+b}~ \underline \chi^{n-1-a-b}\wedge \gamma_\varepsilon^a\wedge   \theta_{con, \varepsilon}^b    } \right|_H  \\
&=& \left.  \frac{ (n-1-a-b) \underline \chi^{n-2-a-b}\wedge \omega\wedge  \gamma_\varepsilon^a\wedge  \theta_{con, \varepsilon}^b   }{ (n-1) \underline \chi^{n-1-a-b}\wedge \gamma_\varepsilon^a\wedge   \theta_{con, \varepsilon}^b    }  \right|_H \\
&<&\frac{n}{n-1}  (\overline \xi - 2\overline\epsilon)  e^{ \psi}
\end{eqnarray*}
by (\ref{subjs2}).

We then consider the terms  containing no $ \omega$ in (I) and those containing $\underline\chi^{n-1-a-b}$ in (II). Their ratio is given by

$$ \left.  \frac{  A^{a+b-1}  \underline \chi^{n-1-a-b}\wedge \gamma_\varepsilon^a\wedge  \theta_{con, \varepsilon}^b   }{ A^{a+b}  \underline \chi^{n-1-a-b}\wedge \gamma_\varepsilon^a\wedge   \theta_{con, \varepsilon}^b    } \right|_H  =A^{-1}$$
for any $p\in Y\setminus \cE$ and  $(n-1-a-b)$-dimensional subspace of $H \subset T_p Y$

By choosing $A$ sufficiently large, we can conclude that the ratio of terms containing $t^a s^b$ on the numerator and the denominator will be less than $\frac{n}{n-1}  (\overline \xi - 2\overline\epsilon)  e^{ \psi}$.

\medskip

\item $a+b=n-1$: The ratio of terms containing $t^a s^b$  in (I) and (II) will be exactly $A^{-1}$ and by choosing sufficiently large $A>1$, we can conclude that it is less than $\frac{n}{n-1} (\overline \xi - 2\epsilon)  e^{ \psi}$.
\end{enumerate}
We have now completed the proof of the proposition.
\end{proof}

We now let $\omega_t = \omega + t\gamma_\varepsilon + s  \theta_{con, \varepsilon}$ and $ \chi_t =\underline  \chi +A t\gamma_\varepsilon + As  \theta_{con, \varepsilon}$ by choosing $0< \varepsilon,  s<<t$. It is obvious that $\omega_t$ and $\chi_t$ are bounded by each other uniformly. 

\begin{lemma} \label{mequ} There exist $p>1$ and $C>0$ such that for all $t\in (0, 1)$, 
\begin{equation}\label{mequ1}
 C^{-1} \omega_t \leq \chi_t \leq C \omega_t, ~ \left\| \frac{\omega_t^n}{\theta_Y^n} \right\|_{L^p(Y, \theta_Y^n)} \leq C. 
 \end{equation}

\end{lemma}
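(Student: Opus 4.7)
The plan is to verify the two claims of \eqref{mequ1} separately. The pointwise equivalence is essentially algebraic, while the $L^p$ bound follows from Corollary \ref{lpp0} once $\omega$ is absorbed into the singular terms $\gamma_\varepsilon$ and $\theta_{con,\varepsilon}$.

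For the two-sided equivalence $C^{-1}\omega_t \leq \chi_t \leq C\omega_t$, I would compare $\omega_t$ and $\chi_t$ term by term. Since $\omega$ and $\underline\chi$ are both smooth K\"ahler forms on $X$, their pullbacks satisfy $C_0^{-1}\pi^*\underline\chi \leq \pi^*\omega \leq C_0\pi^*\underline\chi$ on $\pi^{-1}(X^\circ)$ for some $C_0 \geq 1$, and this extends to all of $Y$ by continuity of the smooth $(1,1)$-forms involved. Choosing $C = \max(A, C_0)$, the inequality $\chi_t \leq C\omega_t$ follows by combining $\underline\chi \leq C\omega$ with $At\gamma_\varepsilon \leq Ct\gamma_\varepsilon$ and $As\theta_{con,\varepsilon} \leq Cs\theta_{con,\varepsilon}$; the reverse inequality $\omega_t \leq C\chi_t$ follows analogously from $\omega \leq C\underline\chi$ together with $t\gamma_\varepsilon \leq CAt\gamma_\varepsilon$ and $s\theta_{con,\varepsilon} \leq CAs\theta_{con,\varepsilon}$, after enlarging $C$ so that $CA \geq 1$. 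The resulting constant depends only on $A$ and on the smooth comparison of $\omega$ and $\underline\chi$, so it is uniform in $t$, $s$, $\varepsilon$.

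For the $L^p$ estimate, the idea is to dominate $\omega_t$ by $\gamma_\varepsilon + \theta_{con,\varepsilon}$. Since $\gamma_\varepsilon$ is a smooth perturbation of $\gamma$, and $\gamma$ is a K\"ahler current bounded below by a proportion of $\omega$, one has $\gamma_\varepsilon \geq c_0\omega$ uniformly in $\varepsilon$ small; consequently $\omega \leq C(\gamma_\varepsilon + \theta_{con,\varepsilon})$. Combined with $t, s \in (0,1)$, this gives
\[
\omega_t \leq C'(\gamma_\varepsilon + \theta_{con,\varepsilon}),
\]
and hence the pointwise bound $\omega_t^n/\theta_Y^n \leq (C')^n (\gamma_\varepsilon + \theta_{con,\varepsilon})^n/\theta_Y^n$. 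The uniform $L^p(Y, \theta_Y^n)$-bound on the right-hand side is exactly Corollary \ref{lpp0}, yielding the claim with the same $p > 1$ and a constant independent of $t$, $s$, $\varepsilon$.

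The main (and essentially only) obstacle is the uniform lower bound $\gamma_\varepsilon \geq c_0\omega$, which requires that the K\"ahler current property of $\gamma$ survives the perturbation $\varepsilon \to 0$. This should be verifiable by the same local analysis used earlier to establish that $\gamma$ is a K\"ahler current on $X$, together with continuity of the relevant quantities in $\varepsilon$ on the fixed compact manifold $Y$. With this bound in hand, no new analytic input beyond the earlier uniform estimates is required, and both claims in \eqref{mequ1} follow with a single constant $C$ independent of $t$, $s$, $\varepsilon$.
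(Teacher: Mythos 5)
Your proof is correct, and since the paper only asserts Lemma \ref{mequ} without proof (declaring the comparison ``obvious'' and pointing to Corollary \ref{lpp0} via Proposition \ref{appsupsol}), what you have done is fill in the implicit details. The two-sided pointwise comparison is exactly the intended elementary argument: since $\omega$ and $\underline\chi$ are both smooth K\"ahler metrics on the compact variety $X$ (restrictions of positive ambient forms), they are mutually bounded, and the coefficients of $\gamma_\varepsilon$ and $\theta_{con,\varepsilon}$ in $\omega_t$ and $\chi_t$ differ only by the fixed factor $A$, so the equivalence constant depends only on $A$ and the $\omega$--$\underline\chi$ comparison. For the $L^p$ estimate, your route goes through $\gamma_\varepsilon\geq c_0\,\omega$, while the paper's implicit route (see the last line of the proof of Proposition \ref{appsupsol}) goes through the simpler observation $\omega\leq C\,\theta_Y$ combined with $\theta_{con,\varepsilon}\geq c\,\theta_Y$, so that $\omega_t\leq C'(\gamma_\varepsilon+\theta_{con,\varepsilon})$ follows without ever invoking the K\"ahler current property of $\gamma_\varepsilon$. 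Both reductions are valid; yours just shifts the uniformity burden onto $\gamma_\varepsilon$, and the verification you flag (that $\gamma_{\varepsilon}\geq c_0\,\omega$ uniformly in small $\varepsilon$) does go through by the same computation that shows $\gamma_{i,j}$ is a K\"ahler current: one has $\ddbar(\rho_{i,j}+\varepsilon)^{1-\delta}\geq -C\,\Theta_{h}$ with $C$ independent of $\varepsilon\in(0,1)$, so $\gamma_{i,j,\varepsilon}=\chi+\epsilon\,\ddbar(\rho_{i,j}+\varepsilon)^{1-\delta}\geq\chi-C\epsilon\,\Theta_h\geq\tfrac12\chi\geq c_0\,\omega$ for $\epsilon$ fixed small. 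You could simplify by using the $\theta_{con,\varepsilon}\geq c\,\theta_Y$ route instead, which sidesteps the point you labelled as the main obstacle.
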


Then we consider the following $J$-equation
\begin{equation}\label{perjeqn}
\frac{(\chi_t + \ddbar u_t)^{n-1} \wedge \omega_t}{(\chi_t + \ddbar u_t)^n} = e^{\psi+ c_t}, ~\max_X \varphi_t =0.
\end{equation}
on $Y$ for $t\in (0, 1)$. Equation (\ref{perjeqn}) is equivalent to the following equation 
\begin{equation}\label{jnormal}
F[u_t]= f(\lambda(\chi_t + \ddbar u_t))=e^{-\psi-c_t}, 
\end{equation}
where $f(\lambda) =\frac{n S_n}{S_{n-1}} $ and $\lambda_{\chi_t+\ddbar u_t}$ is the set of eigenvalues of $\chi_t+\ddbar u_t$ with respect to $\omega_t$. 
Obviously $f$ satisfies the assumption of Theorem \ref{mainthm2} for the positive cone $\Gamma_n$. 
Recall that the sup-slope $\sigma_t$ for equation (\ref{jnormal}) is given by
\begin{equation}\label{jsslo}
\sigma_t = \inf_{\phi\in C^\infty(Y)\cap {\rm PSH}(X, \chi_t)} \max_Y \left( e^{\psi} \frac{(\chi_t + \ddbar \phi)^n}{(\chi_t + \ddbar \phi)^{n-1} \wedge \omega_t} \right), %
\end{equation}
In particular, the $J$-slope for equation (\ref{perjeqn}) is given by 
$$\xi_t = \sigma_t^{-1}= e^{c_t}. $$

\begin{lemma}
For any $t\in (0,1)$,   $\underline u=0$ is both a super-solution and a sub-solution for  equation (\ref{jnormal}) satisfying
\begin{equation}\label{jsubsup}
\max_Y e^{\psi} f(\lambda(\chi_t)) < \min_{Y}  e^{\psi} f_{\infty}(\lambda(\chi_t)) -\overline \epsilon. 
\end{equation}
 In particular, for any $0<t <1$, equation (\ref{perjeqn}) admits a unique solution $u_t\in C^\infty(Y)\cap {\rm PSH}(Y, \chi_t)$. 

\end{lemma}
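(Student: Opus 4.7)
The plan is to verify (\ref{jsubsup}) by supplying its two halves from Propositions \ref{appsubsol} and \ref{appsupsol} respectively, and then to conclude existence and uniqueness by invoking Theorem \ref{mainthm2} applied to equation (\ref{jnormal}) on the positive cone $\Gamma_n$. The starting algebraic observation is that for $f(\lambda)=nS_n(\lambda)/S_{n-1}(\lambda)=n/\sum_i \lambda_i^{-1}$ one has the closed form
$f_\infty(\lambda)=n/\sum_{i\ne j_{\max}}\lambda_i^{-1}$, where $j_{\max}$ is the index of the largest entry of $\lambda$; in particular $0<f<f_\infty<\infty$ pointwise on $\Gamma_n$ and the structural hypotheses (\ref{fcon1})--(\ref{fcon4}) hold on $\Gamma_n$.

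The lower bound on $\min_Y e^\psi f_\infty(\lambda(\chi_t))$ comes from a pointwise translation of Proposition \ref{appsubsol}. In a local frame diagonalising $\chi_t$ with respect to $\omega_t$, writing $\alpha_i=\sqrt{-1}dz^i\wedge d\bar z^i$, one has
\[
\chi_t^{n-1}=(n-1)!\sum_m \Bigl(\prod_{i\ne m}\lambda_i\Bigr)\prod_{i\ne m}\alpha_i,\qquad \chi_t^{n-2}\wedge\omega_t=(n-2)!\sum_m\Bigl(\sum_{j\ne m}\prod_{i\ne j,m}\lambda_i\Bigr)\prod_{i\ne m}\alpha_i.
\]
Matching the coefficient of each $\prod_{i\ne m}\alpha_i$ in the $(n-1,n-1)$-form inequality of Proposition \ref{appsubsol} and dividing by $\prod_{i\ne m}\lambda_i$ reduces it to $\sum_{j\ne m}\lambda_j^{-1}<n(\overline\xi-2\overline\epsilon)e^\psi$ for every $m=1,\ldots,n$; specialising to $m=j_{\max}$ and inverting gives $e^\psi f_\infty(\lambda(\chi_t))>1/(\overline\xi-2\overline\epsilon)$ pointwise on $Y$.

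For the upper bound on $\max_Y e^\psi f$, Proposition \ref{appsupsol} yields the smooth positive form $\chi'_t=(1+At)\chi+As\theta_Y$ with $(\chi'_t)^{n-1}\wedge\omega_t>(\overline\xi-\overline\epsilon)e^\psi(\chi'_t)^n$, i.e.\ $e^\psi f(\lambda(\chi'_t))<1/(\overline\xi-\overline\epsilon)$. Because $\chi-\underline\chi=\ddbar\phi_0$ on $X$ (both K\"ahler representatives of $\alpha$), $\gamma_\varepsilon=\chi+\ddbar\Phi_\varepsilon$ on $Y$ by the averaged construction, and $\theta_{con,\varepsilon}=\theta_Y+\ddbar\Psi_\varepsilon$ by (\ref{helo}), the difference $\chi'_t-\chi_t=(\chi-\underline\chi)+At(\chi-\gamma_\varepsilon)+As(\theta_Y-\theta_{con,\varepsilon})$ is $\ddbar$-exact on $Y$ with a smooth potential $\overline u_t\in C^\infty(Y)$ satisfying $\chi_t+\ddbar\overline u_t=\chi'_t$. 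Taking this $\overline u_t$ as the super-solution ingredient gives
\[
\max_Y e^\psi f(\lambda(\chi_t+\ddbar\overline u_t))\leq\frac{1}{\overline\xi-\overline\epsilon}<\frac{1}{\overline\xi-2\overline\epsilon}-\overline\epsilon'\leq\min_Y e^\psi f_\infty(\lambda(\chi_t))-\overline\epsilon'
\]
for any $\overline\epsilon'>0$ smaller than the explicit gap $\overline\epsilon/[(\overline\xi-\overline\epsilon)(\overline\xi-2\overline\epsilon)]$. After renaming $\overline\epsilon\leftarrow \overline\epsilon'$ this is precisely condition $(4)$ of Theorem \ref{mainthm2} applied to (\ref{jnormal}), which is the content of (\ref{jsubsup}); the theorem then delivers the unique solution $u_t\in C^\infty(Y)\cap\PSH(Y,\chi_t)$ of (\ref{perjeqn}) normalised by $\max_Y u_t=0$.

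The main obstacle will be the combinatorial translation of the global $(n-1,n-1)$-form inequality in Proposition \ref{appsubsol} into the sharp pointwise bound $\sum_{j\ne m}\lambda_j^{-1}<n(\overline\xi-2\overline\epsilon)e^\psi$, which requires carefully expanding the wedge products in a diagonalising basis and matching coefficients term by term. A secondary subtlety is verifying that for each fixed $\varepsilon>0$ the potentials $\Phi_\varepsilon,\Psi_\varepsilon,\overline u_t$ are genuinely $C^\infty$ on $Y$; this follows because the $\varepsilon$-regularisation of $(|\sigma_{E_i}|^2_{h_{E_i}}+\varepsilon)^{1-\delta}$ and $\epsilon(\rho_{i,j}+\varepsilon)^{1-\delta}$ makes the relevant base functions strictly positive on $Y$, removing the only source of non-smoothness.
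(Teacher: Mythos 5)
Your proof is correct and follows the same route as the paper's (which simply cites Propositions \ref{appsupsol} and \ref{appsubsol} and then invokes Theorem \ref{mainthm2}); the eigenvalue translation of the two form inequalities via $f(\lambda)=n/\sum_i\lambda_i^{-1}$ and $f_\infty(\lambda)=n/\sum_{i\neq j_{\max}}\lambda_i^{-1}$ is exactly the intended content of ``by the construction of $\omega_t$ and $\chi_t$.'' The one place where you go beyond the paper is worth highlighting: Proposition \ref{appsupsol} controls $(1+At)\chi+As\theta_Y$ rather than $\chi_t=\underline\chi+At\gamma_\varepsilon+As\theta_{con,\varepsilon}$, so the literal left-hand side of (\ref{jsubsup}) is not what that proposition bounds; your observation that the two forms differ by $\ddbar\overline u_t$ for a smooth $\overline u_t$ (all three differences being exact with smooth potentials once $\varepsilon>0$), so that one should verify condition (4) of Theorem \ref{mainthm2} with the pair $(\overline u_t,0)$ rather than condition (3) with $u=0$, is a legitimate and essentially necessary repair of the lemma as stated, and it still yields the unique smooth solution of (\ref{perjeqn}).
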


\begin{proof} The estimate (\ref{jsubsup})  follows from   Proposition \ref{appsupsol} and Proposition \ref{appsubsol} by the construction of $\omega_t$ and $\chi_t$. The lemma is then proved by applying Theorem \ref{mainthm2}. 
\end{proof}

\begin{lemma} There exists $C>0$ such that for any $0<t<1$, the sup-slope $\sigma_t $ for equation (\ref{perjeqn}) satisfies
$$  C^{-1}\leq \sigma_t \leq C.$$
\end{lemma}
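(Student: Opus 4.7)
The plan is to bound $\sigma_t$ from above and from below separately. The upper bound comes from testing the infimum defining $\sigma_t$ in \eqref{jsslo} against an explicit $\chi_t$-plurisubharmonic function constructed from the super-solution $\chi$; the lower bound is a maximum-principle argument at a minimum point of an arbitrary competitor, together with the uniform comparability $\chi_t\asymp \omega_t$ supplied by Lemma \ref{mequ}.

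For the upper bound, I would first observe that $\pi^*\chi$ and $\pi^*\underline{\chi}$ represent the same K\"ahler class $\pi^*\alpha$ on the smooth projective (hence K\"ahler) resolution $Y$, so the $\partial\bar\partial$-lemma produces a smooth function $h$ with $\chi-\underline{\chi}=\ddbar h$. Combined with the fact that $\gamma_\varepsilon-\chi$ and $\theta_{con,\varepsilon}-\theta_Y$ are $\ddbar$-exact by their very construction, one obtains a smooth $\phi_0$ on $Y$ with
$$
\chi_t+\ddbar \phi_0 \,=\, (1+At)\chi + As\theta_Y.
$$
The right-hand side is a smooth K\"ahler form, so $\phi_0 \in C^\infty(Y)\cap \mathrm{PSH}(Y,\chi_t)$. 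Inserting $\phi_0$ into \eqref{jsslo} and then dividing through the inequality \eqref{supcomp} of Proposition \ref{appsupsol} yields
$$
\sigma_t \,\leq\, \max_Y e^\psi\, \frac{\bigl((1+At)\chi+As\theta_Y\bigr)^n}{\bigl((1+At)\chi+As\theta_Y\bigr)^{n-1}\wedge \omega_t} \,\leq\, \frac{1}{\overline\xi-\overline\epsilon}.
$$

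For the lower bound, I would fix an arbitrary $\phi\in C^\infty(Y)\cap \mathrm{PSH}(Y,\chi_t)$ and pick a point $p_\phi$ where $\phi$ attains its minimum, so that $\ddbar\phi(p_\phi)\geq 0$ and hence $\chi_{t,\phi}(p_\phi)\geq \chi_t(p_\phi)$ as hermitian forms. The map $\eta\mapsto \eta^n/(\eta^{n-1}\wedge \omega_t)$ is the harmonic mean of the eigenvalues of $\eta$ with respect to $\omega_t$ and is monotone increasing in each eigenvalue; combining these two facts gives
$$
\max_Y e^\psi \frac{\chi_{t,\phi}^n}{\chi_{t,\phi}^{n-1}\wedge \omega_t} \,\geq\, e^{\psi(p_\phi)}\,\frac{\chi_t^n}{\chi_t^{n-1}\wedge \omega_t}(p_\phi) \,\geq\, \bigl(\min_Y e^\psi\bigr) \min_Y \frac{\chi_t^n}{\chi_t^{n-1}\wedge \omega_t}.
$$
By Lemma \ref{mequ}, the eigenvalues of $\chi_t$ with respect to $\omega_t$ lie in $[C^{-1},C]$ uniformly in $t$, so their harmonic mean is uniformly at least $C^{-1}$; taking the infimum over $\phi$ gives $\sigma_t \geq C^{-1}\min_Y e^\psi>0$.

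The only step requiring attention is the existence of the smooth potential $\phi_0$ in the upper-bound argument, which needs each constituent of $\chi_t$ and of $(1+At)\chi+As\theta_Y$ to differ from a common reference by a $\ddbar$-exact form. The $\ddbar$-exactness of $\gamma_\varepsilon-\chi$ and of $\theta_{con,\varepsilon}-\theta_Y$ is built into their definitions, and that of $\pi^*(\chi-\underline{\chi})$ follows from the $\partial\bar\partial$-lemma on the K\"ahler manifold $Y$. Beyond this, both bounds reduce to pointwise comparisons already encapsulated in Proposition \ref{appsupsol} and Lemma \ref{mequ}, so I expect no substantial analytic obstacle here; the hard preparatory work has already been done in the preceding propositions.
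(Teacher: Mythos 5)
Your proof is correct, and the heart of your lower-bound argument is the same idea the paper uses: evaluate the ratio at the minimum point of the potential, observe that the complex Hessian is nonnegative there so $\chi_t+\ddbar\phi\ge\chi_t$, use the monotonicity of $\eta\mapsto\eta^n/(\eta^{n-1}\wedge\omega_t)$, and then invoke the uniform comparability $C^{-1}\omega_t\le\chi_t\le C\omega_t$ from Lemma \ref{mequ}. The implementation differs in two respects. For the lower bound, the paper applies the maximum principle to the actual solution $u_t$ of \eqref{perjeqn} (whose existence was just established), bounding $e^{c_t}=\sigma_t^{-1}$ directly from the equation; you instead apply it to an arbitrary competitor $\phi$ in the definition \eqref{jsslo}, which is a slightly more self-contained variant that does not invoke the prior solvability result. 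For the upper bound, the paper treats it as immediate and focuses only on the lower bound; indeed, taking the trivial competitor $\phi=0$ in \eqref{jsslo} already gives $\sigma_t\le\max_Y e^\psi\,\chi_t^n/(\chi_t^{n-1}\wedge\omega_t)\le C$ by Lemma \ref{mequ}. Your route — building a smooth $\chi_t$-psh potential $\phi_0$ via the $\partial\dbar$-lemma on $Y$ so that $\chi_t+\ddbar\phi_0=(1+At)\chi+As\theta_Y$ and then quoting Proposition \ref{appsupsol} — is correct and yields the sharper quantitative bound $\sigma_t\le 1/(\overline\xi-\overline\epsilon)$, but is more machinery than is needed for a bound independent of $t$.
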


\begin{proof}  It suffices to show that $\sigma_t$ is uniformly bounded below, or equivalently, $c_t$ is uniformly bounded above. Since equation (\ref{perjeqn}) can be solved for each $t\in (0,1)$, we 
 let $p_t$ be the minimal point of $u_t$. By the maximum principle,   we have 
$$e^{c_t} \leq \left. e^{-\psi} \frac{\chi_t^{n-1}\wedge \omega_t}{\chi_t^n}  \right|_p \leq \max_Y \left( e^{-\psi} \frac{\chi_t^{n-1}\wedge \omega_t}{\chi_t^n}  \right) \leq C $$ 
since $\omega_t$ and $\chi_t$ are uniformly equivalent to each other by Lemma \ref{mequ}. 
\end{proof}

The same argument in the proof of Proposition \ref{pinsubsol} gives the following lemma. 

\begin{lemma} There exist $r>0$, $R>0$ such that  $\underline{u}=0$ is a $\cC_{e^{-\psi+c_t}, r, R}$-subsolution for (\ref{jnormal}) for all $t\in (0,1)$. 

\end{lemma}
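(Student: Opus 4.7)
The proof would be a direct adaptation of the contradiction argument in case (2) of Proposition \ref{pinsubsol}, since for the $J$-operator $f(\lambda)=nS_n/S_{n-1}$ the symbol $f_\infty$ is bounded and finite. The two features that make the adaptation work despite the $t$-dependence of $\chi_t$ and $\omega_t$ are: (i) a uniform analogue of Proposition \ref{subpsi}, and (ii) a uniform pinching of the eigenvalues $\lambda(\chi_t)$ provided by Lemma \ref{mequ}.

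For (i), I would combine (\ref{jsubsup}), which supplies the gap
$$\max_Y e^\psi f(\lambda(\chi_t)) < \min_Y e^\psi f_\infty(\lambda(\chi_t)) - \overline\epsilon$$
with the $t$-independent constant $\overline\epsilon>0$, together with the two-sided bound $C^{-1}\leq \sigma_t = e^{-c_t} \leq C$ from the preceding lemma. These yield a uniform $\delta>0$ with
$$\min_Y e^\psi f_\infty(\lambda(\chi_t)) \geq (1+\delta)e^{-c_t}, \qquad t\in(0,1).$$
For (ii), Lemma \ref{mequ} gives $C^{-1}\omega_t \leq \chi_t \leq C\omega_t$ uniformly, so $\lambda(\chi_t(p))$ always lies in the fixed compact set $K = [C^{-1},C]^n \subset \Gamma_n$. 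Choosing $r>0$ small enough that $K - 2r\mathbf{1}$ remains in a compact subset of $\Gamma_n$ and invoking uniform continuity of $f_\infty$ on this shifted set, I upgrade the previous bound to
$$\min_Y e^\psi f_\infty(\lambda(\chi_t) - 2r\mathbf{1}) \geq (1+\delta/2)e^{-c_t}.$$

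With this uniform $r$ fixed, suppose for contradiction that no uniform $R$ exists. Extract sequences $q_k\to q\in Y$, $c_{t_k}\to c'$, $\lambda(\chi_{t_k}(q_k))\to \lambda^* \in K$, and $V_k \in \Gamma_n$ with $|V_k|\to\infty$ whose last component (after relabeling) diverges, such that $f(\lambda(\chi_{t_k}(q_k)) - r\mathbf{1} + V_k) = e^{-\psi(q_k) - c_{t_k}}$. Writing
$$\lambda(\chi_{t_k}(q_k)) - r\mathbf{1} + V_k = \bigl(\lambda^* - 2r\mathbf{1}\bigr) + V_k + \bigl(r\mathbf{1} + \lambda(\chi_{t_k}(q_k)) - \lambda^*\bigr),$$
the last parenthesis lies in $\Gamma_n$ for large $k$, so monotonicity of $f$ followed by $k\to\infty$ and the definition of $f_\infty$ yield
$$e^{-\psi(q)-c'} \geq f_\infty(\lambda^* - 2r\mathbf{1}) \geq (1+\delta/2) e^{-\psi(q)-c'},$$
the desired contradiction.

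The main obstacle is verifying that every constant ($\delta$, $r$, the compact set $K$, and the subsequential limit points) is genuinely uniform in $t\in (0,1)$; this is handled cleanly by Lemma \ref{mequ} and by the uniform bounds on $c_t$ from the preceding lemma, which together turn the $t$-varying geometry into a single compact-set problem on $\Gamma_n$.
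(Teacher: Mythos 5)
Your proposal is correct and follows the same route the paper intends: the paper's own proof is the one-line remark that the argument of Proposition~\ref{pinsubsol} carries over, and you have supplied exactly the details needed to make that carry-over work uniformly in $t$ (the uniform gap from \eqref{jsubsup} together with $\sigma_t=e^{-c_t}\in[C^{-1},C]$, the compactness of the eigenvalue range via Lemma~\ref{mequ}, and the contradiction argument in the bounded-$f_\infty$ case). Minor presentational note: the lemma as printed writes $\cC_{e^{-\psi+c_t},r,R}$, while the equation \eqref{jnormal} has right-hand side $e^{-\psi-c_t}$ (and the very next lemma in the paper uses $e^{-\psi-c_t}$); you correctly read the intended sign.
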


The recent important work of \cite{GP} extends the $L^\infty$-estimate of \cite{Sz}  to a family of degenerating background K\"ahler metrics with techniques developed in \cite{GPT}. It turns out to be an essential estimate in our application for the $J$-equation  in the singular setting.   

\begin{lemma} Let $u_t$ be the unique solution of equation (\ref{perjeqn}).  There exists $C>0$ such that for all $0<t<1$, we have 
$$\|u_t\|_{L^\infty(Y)} \leq C. $$

\end{lemma}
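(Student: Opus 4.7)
The plan is to apply the $L^\infty$ estimate of Guo-Phong \cite{GP} uniformly in $t \in (0,1)$. That theorem extends Szekelyhidi's a priori bound \cite{Sz} for general fully nonlinear equations to a family of degenerating background forms, provided the reference volume form remains bounded in $L^p$ against a fixed smooth volume form and a uniform $\mathcal C$-subsolution exists. Our setup has been designed to fit precisely into this framework.

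First I would rewrite equation \eqref{perjeqn} in the hessian-quotient form \eqref{jnormal} with operator $f(\lambda) = nS_n/S_{n-1}$ on the positive cone $\Gamma_n$, which satisfies the structural conditions \eqref{fcon1}--\eqref{fcon4}. The right-hand side $e^{-\psi-c_t}$ is uniformly bounded in $t$, since $c_t$ is bounded by the preceding lemma and $\psi \in C^\infty(X)$ is fixed.

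Second I would collect the uniform geometric inputs already established and feed them into the \cite{GP} theorem: the equivalence $C^{-1}\omega_t \leq \chi_t \leq C\omega_t$ and the integrability bound $\|\omega_t^n/\theta_Y^n\|_{L^p(Y,\theta_Y^n)} \leq C$ for some fixed $p>1$, both from Lemma \ref{mequ}; and the fact that $\underline u = 0$ is a $\mathcal C_{e^{-\psi+c_t}, r, R}$-subsolution for some fixed $r,R>0$ independent of $t$, from the lemma just preceding the statement. With these uniform inputs, the \cite{GP} estimate produces a constant $C>0$ such that $\|u_t - \max_Y u_t\|_{L^\infty(Y)} \leq C$. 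The normalization $\max_Y u_t = 0$ in \eqref{perjeqn} then yields $\|u_t\|_{L^\infty(Y)} \leq C$.

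The main obstacle, and the reason one cannot invoke \cite{Sz} directly, is the degeneration of the background: along the exceptional locus of $\pi\colon Y\to X$, both $\omega_t$ and $\chi_t$ acquire conical singularities as $t \to 0$, and the volume forms $\omega_t^n$ are not uniformly bounded pointwise against the fixed smooth reference $\theta_Y^n$. The construction in this section---in particular Propositions \ref{appsupsol} and \ref{appsubsol}---was arranged precisely so that the subsolution condition and the $L^p$ control on the volume form persist uniformly through the degeneration, which is exactly the regime where the \cite{GP} machinery (built on techniques of \cite{GPT}) applies and delivers the required uniform $L^\infty$ bound.
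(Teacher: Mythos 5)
Your proposal is correct and follows essentially the same route as the paper: rewrite \eqref{perjeqn} in the hessian-quotient form \eqref{jnormal}, collect the uniform inputs from Lemma \ref{mequ} (the equivalence $C^{-1}\omega_t \leq \chi_t \leq C\omega_t$ and the $L^p$ bound on $\omega_t^n/\theta_Y^n$) together with the uniform $\mathcal C$-subsolution from the preceding lemma, and invoke Theorem 2.1 of \cite{GP} to get the uniform $L^\infty$ bound. Your added remark explaining why \cite{Sz} alone does not suffice and how the normalization recovers the full $L^\infty$ bound from the oscillation estimate is accurate, but this is the same argument, just spelled out in slightly more detail than the paper's one-line proof.
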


\begin{proof}  By the uniform estimate (\ref{mequ1}), we can directly apply Theorem 2.1 of \cite{GP} to obtain the uniform $L^\infty$-estimate for $u_t$ since $\underline u=0$ is a $\cC_{e^{-\psi-c_t}, r, R}$-subsolution for (\ref{jnormal}) for fixed $r$, $R>0$ for all $t\in (0,1)$.
\end{proof}

\begin{corollary} There exists a  solution $u \in L^\infty(X)\cap {\rm PSH}(X, \underline \chi)$ to the $J$-equation 
\begin{equation}\label{jonx}
 (\underline \chi + \ddbar u)^{n-1}\wedge \omega = e^{\psi+c} (\underline \chi + \ddbar u)^n.
 \end{equation} 
for some $c\in \mathbb{R}$. Furthermore, there exists $C>0$ such that 
 \begin{equation}\label{kcond}
 \underline \chi + \ddbar u \geq C^{-1} \omega. 
 \end{equation}

\end{corollary}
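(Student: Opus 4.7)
The proof takes the limit $t\to 0$ of the solutions $u_t$ to \eqref{perjeqn} on $Y$, choosing $\varepsilon=\varepsilon(t),\,s=s(t)$ with $0<\varepsilon,s\ll t$ so that Propositions \ref{appsupsol} and \ref{appsubsol} remain applicable and the bounds of Lemma \ref{mequ} hold uniformly. The inputs are (a) the uniform $L^\infty$-estimate $\|u_t\|_{L^\infty(Y)}\leq C$ from the preceding lemma, (b) the uniform bound $|c_t|\leq C$ established earlier, and (c) the uniform $L^p$-integrability of $\omega_t^n/\theta_Y^n$. From (a) together with $u_t\in \mathrm{PSH}(Y,\chi_t)$ and $\chi_t\to \pi^*\underline{\chi}$ smoothly on compact subsets of $Y\setminus \Xi$, standard compactness for quasi-psh families yields a subsequence $u_{t_k}\to u$ in $L^1(Y)$ and almost everywhere, with $u\in L^\infty(Y)\cap \mathrm{PSH}(Y,\pi^*\underline{\chi})$ after upper-semicontinuous regularization, and $c_{t_k}\to c$ for some $c\in\mathbb{R}$.

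\textbf{K\"ahler current bound.} Equation \eqref{perjeqn} can be rewritten as
\begin{equation*}
\mathrm{tr}_{\chi_t+\ddbar u_t}\omega_t = n e^{\psi+c_t},
\end{equation*}
so each eigenvalue $\lambda_i$ of $\chi_t+\ddbar u_t$ with respect to $\omega_t$ satisfies $\lambda_i^{-1}\leq n e^{\psi+c_t}$, whence $\lambda_i\geq C^{-1}$ uniformly in $t$. Thus $\chi_t+\ddbar u_t\geq C^{-1}\omega_t\geq C^{-1}\omega$ on $Y$, and this pointwise lower bound is preserved under weak limits of positive currents, giving $\pi^*\underline{\chi}+\ddbar u\geq C^{-1}\pi^*\omega$ on $Y\setminus \Xi$, hence \eqref{kcond} on $X^\circ$.

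\textbf{Limit of the equation and descent to $X$.} The stability of Bedford--Taylor together with the $L^p$-type estimates in \cite{GPT,GP} imply the weak convergence of currents
\begin{equation*}
(\chi_t+\ddbar u_t)^n\rightharpoonup (\pi^*\underline{\chi}+\ddbar u)^n,\qquad (\chi_t+\ddbar u_t)^{n-1}\wedge \omega_t \rightharpoonup (\pi^*\underline{\chi}+\ddbar u)^{n-1}\wedge \pi^*\omega
\end{equation*}
on $Y$; crucial here are the uniform K\"ahler current bound of the previous step and the uniform $L^p$-bound \eqref{mequ1}. Combined with the uniform convergence $e^{\psi+c_t}\to e^{\psi+c}$, we pass to the limit in \eqref{perjeqn} to obtain the equation on $Y$. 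Since $X$ is normal, $\mathrm{codim}_{\mathbb{C}}\cS\geq 2$, so the bounded $\underline{\chi}$-psh function $u|_{Y\setminus \Xi}$ (identified with a function on $X^\circ$) extends uniquely to a function in $L^\infty(X)\cap \mathrm{PSH}(X,\underline{\chi})$ solving \eqref{jonx} in the sense of currents.

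\textbf{Main obstacle.} The central difficulty is to justify simultaneous weak convergence of $(\chi_t+\ddbar u_t)^n$ and $(\chi_t+\ddbar u_t)^{n-1}\wedge\omega_t$ when $\chi_t$ itself varies with $t$ and $\omega_t$ degenerates along the exceptional locus $\Xi$. The uniform K\"ahler current bound $\chi_t+\ddbar u_t\geq C^{-1}\omega_t$ prevents loss of mass into negative directions, while the $L^p$-integrability of $\omega_t^n/\theta_Y^n$ rules out concentration of Monge--Amp\`ere mass along $\Xi$; together they supply exactly the compactness needed to pass to the limit in both sides of the equation.
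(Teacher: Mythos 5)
Your proof is correct and follows essentially the same strategy as the paper: extract a weakly convergent subsequence $u_{t_j}\to u$ in $L^\infty$, pass to the limit in the equation, descend to $X$ using normality, and obtain the K\"ahler current bound from the equation itself. You spell out the eigenvalue argument (via $\mathrm{tr}_{\chi_t+\ddbar u_t}\omega_t = n e^{\psi+c_t}$) and the role of the uniform $L^p$-bound in the weak convergence more explicitly than the paper, which compresses both of these into a line each, but there is no difference in substance.
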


\begin{proof} Let $\varphi_t$ be the solution to the equation (\ref{perjeqn}). By passing to a subsequence, we can assume that $u_{t_j} \rightarrow u \in L^\infty(Y)\cap {\rm PSH}(Y, \underline \chi)$, $c_{t_j} \rightarrow c_0$ as $t_j \rightarrow 0$. Since $u_t$ is uniformly bounded in $L^\infty(Y)$, 
\begin{eqnarray*}
&& \lim_{t_j \rightarrow 0} (\underline \chi + \ddbar u_{t_j} )^{n-1}\wedge \omega = (\underline \chi + \ddbar u)^{n-1}\wedge \omega \\
&=& \lim_{t_j \rightarrow 0} e^{\psi+c_{t_j}} (\underline \chi + \ddbar u_{t_j})^n = e^{\psi+c_0} (\underline \chi + \ddbar u)^n 
\end{eqnarray*}
holds on $Y$. Since $\underline \chi$ is a K\"ahler form on $X$, $u$ can be pushed forward to $X$ and $u \in L^\infty(X)\cap {\rm PSH}(X, \underline \chi)$. Therefore, $u$ indeeds solves equation (\ref{jonx}).

Since $\chi_t + \ddbar u_t$ is smooth on $Y$, there exists $C>0$ such that for all $t\in (0, 1)$, 
$$\chi_t + \ddbar u_t \geq C^{-1}  \omega_t$$
by equation (\ref{perjeqn}).  
The K\"ahler condition (\ref{kcond}) then immediately follows.
\end{proof}

We have now completed the proof of Theorem \ref{mainthm4}. 
\bigskip
\bigskip

\end{document}